\tikzset{snake it/.style={decorate, decoration=snake}}
\tikzset{zigzag/.style={decorate, decoration=zigzag}}
\definecolor{ao(english)}{rgb}{0.0, 0.5, 0.0}
\definecolor{darkgreen}{rgb}{0.0, 0.5, 0.0}
	\definecolor{eng}{rgb}{0.0, 0.5, 0.0}
\definecolor{apple}{rgb}{0.55, 0.71, 0.0}
\definecolor{cadmium}{rgb}{0.0, 0.42, 0.24}
\definecolor{darkspringgreen}{rgb}{0.09, 0.45, 0.27}
\definecolor{amethyst}{rgb}{0.6, 0.4, 0.8}
\definecolor{ao}{rgb}{0.0, 0.0, 1.0}
\definecolor{atomictangerine}{rgb}{1.0, 0.6, 0.4}
\definecolor{carmine}{rgb}{0.59, 0.0, 0.09}
\definecolor{toggle}{rgb}{1.0, 0.94, 0.96}
 \newcommand{\csigma}{{{{\color{magenta}\bm\sigma}}}}
 \newcommand{\ctau}{{{{\color{cyan}\bm\tau}}}}
 \newcommand{\al}{{{{\color{magenta}\bm \sigma}}}}
 \newcommand{\crho}{{{{ \color{green!80!black}\bm\rho}}}}
\newcommand{\w}{{\underline{w}}}
\newcommand{\vvv}{{\underline{v} }}
\newcommand{\y}{{\underline{y}}}
\newcommand{\x}{{\underline{x}}}
\def\down{\vee}
\def\up{\wedge}
\tikzset{
  variable line width/.style={
    every variable line width/.append style={#1},
    to path={%
      \pgfextra{%
        \draw[every variable line width/.try,line width=\pgfkeysvalueof{/tikz/thickness}] (\tikztostart) -- (\tikztotarget);
      }%
      (\tikztotarget)
    },
  },
  thickness/.initial=0.6pt,
  every variable line width/.style={line cap=round, line join=round},
}
\newlength{\superthick}
\newlength{\cornerradius}
\tikzstyle{corner}=[rounded corners=\cornerradius]
\tikzstyle{dot}=[circle, inner sep=0pt, minimum size=4.8pt]
\tikzstyle{string}=[line width=\superthick]
\tikzstyle{std}=[string,dash pattern=on 0.9pt off 0.9pt]
\definecolor{realcyan}{rgb}{0,1,1}
\mathchardef\mhyphen="2D
\definecolor{mediumblue}{rgb}{0.0, 0.0, 0.8}
\newcommand{\mptn}{{\mathscr{P}_{n}}}
\renewcommand{\geq}{\geqslant}
\renewcommand{\leq}{\leqslant}
\tikzset{wei/.style= 
{red,double=red,double
distance=0.5pt}}
\newcommand{\cp}{{\color{cyan}p}}
\newcommand{\ps}{{\color{pink}s}}
\newcommand{\vu}{{\color{violet}u}}
\newcommand{\gr}{{\color{darkgreen}r}}
\newcommand{\mq}{{\color{magenta}q}}
\newcommand{\ot}{{\color{orange}t}}
\newcommand{\rs}{{\color{red}s}}
\newcommand{\CP}{{\color{cyan}P}}
\newcommand{\MQ}{{\color{magenta}Q}}
 \newcommand{\bet}{{{\color{cyan}\boldsymbol \tau}}}
\tikzset{wei2/.style={red,double=red,double
distance=0.5pt}}
\numberwithin{equation}{section}
\newtheorem{thm}{Theorem}[section]
\newtheorem{cor}[thm]{Corollary}
\newtheorem{lem}[thm]{Lemma}
\newtheorem{prop}[thm]{Proposition}
\newtheorem*{prop*}{Proposition}
\newtheorem*{thmA}{Theorem A}
\newtheorem*{thmB}{Theorem B}
\newtheorem*{cor*}{Corollary}
\newtheorem*{conj*}{Conjecture D}
\newtheorem*{conj1*}{Conjecture B}
\newtheorem*{Acknowledgements*}{Acknowledgements}
\newtheorem{rmk}[thm]{Remark}
\newtheorem{defn}[thm]{Definition}
\newtheorem{eg}[thm]{Example}
\newcommand{\Std}{{\rm Std}}
\newcommand{\la}{\lambda}
\newcommand{\sts}{\mathsf{s}}  
\newcommand{\stt}{\mathsf{t}}  
\newcommand{\stu}{\mathsf{u}}  
\newcommand{\ZZ}{{\mathbb Z}}
\newcommand{\DP}{\underline{\mu}\la}
\tikzset{
ultra thin/.style= {line width=0.05pt},
very thin/.style=  {line width=0.2pt},
thin/.style=       {line width=0.1pt},
semithick/.style=  {line width=0.6pt},
thick/.style=      {line width=0.8pt},
very thick/.style= {line width=1.2pt},
ultra thick/.style={line width=1.6pt}
}
\crefname{ques}{Question}{Questions}
\crefname{defn}{Definition}{Definitions}
\crefname{thm}{Theorem}{Theorems}
\crefname{prop}{Proposition}{Propositions}
\crefname{lem}{Lemma}{Lemmas}
\crefname{cor}{Corollary}{Corollaries}
\crefname{conj}{Conjecture}{Conjectures}
\crefname{section}{Section}{Sections}
\crefname{subsection}{Subsection}{Subsections}
\crefname{eg}{Example}{Examples}
\crefname{figure}{Figure}{Figures}
\crefname{rem}{Remark}{Remarks}
\crefname{rmk}{Remark}{Remarks}
\crefname{equation}{equation}{equation}
\Crefname{ques}{Question}{Questions}
\Crefname{defn}{Definition}{Definitions}
\Crefname{thm}{Theorem}{Theorems}
\Crefname{prop}{Proposition}{Propositions}
\Crefname{lem}{Lemma}{Lemmas}
\Crefname{cor}{Corollary}{Corollaries}
\Crefname{conj}{Conjecture}{Conjectures}
\Crefname{section}{Section}{Sections}
\Crefname{subsection}{Subsection}{Subsections}
\Crefname{eg}{Example}{Examples}
\Crefname{figure}{Figure}{Figures}
\Crefname{rem}{Remark}{Remarks}
\Crefname{rmk}{Remark}{Remarks}
\begin{document}
   
 \title[Isotropic meta Kazhdan--Lusztig combinatorics I]{Isotropic meta Kazhdan--Lusztig combinatorics I: Ext-quiver presentation for the Hecke
category}

 \author{Ben Mills}
       \address{Department of Mathematics, 
University of York, Heslington, York,  UK}
\email{ben.mills@york.ac.uk}

\begin{abstract}
  	We provide an ${\rm Ext}$-quiver and relations presentation for the anti-spherical Hecke categories
	of isotropic Grassmannians in terms of     Temperley--Lieb diagrammatics.
\end{abstract}

 \maketitle

 \vspace{-0.5cm}
 \section{Introduction}

The
($p$)-Kazhdan--Lusztig polynomials arise in a wide range of contexts within representation theory, geometry, and Lie theory. Their ubiquity naturally leads to the following question: can the (often strictly combinatorial) methods used to compute Kazhdan--Lusztig polynomials be enriched so as to
shed additional light on the algebraic and geometric structures associated with them? An affirmative answer to this question in turn invites a further one: if two a priori different structures are described by the same Kazhdan--Lusztig polynomials, does this indicate the existence of a deeper relationship or even an equivalence between these  categorical structures?

The most combinatorially well-understood (parabolic) Kazhdan--Lusztig polynomials are those of Hermitian symmetric pairs \cite{lascoux1981polynomes,MR840583,Boe1988-va,brenti2009parabolic,Brundan2010-mb,MR2813567,bowman2023orientedtemperleyliebalgebrascombinatorial}.
 These polynomials control the  categories  of perverse sheaves on the (isotropic) Grassmannians  of type $A_n$, $D_n$ (and type $B_{n-1}$) \cite{Strop1,Brundan2011-ye,TypeDKhov}, 
 singular blocks of category $\mathcal{O}$ \cite{BOEANDCO}, algebraic supergroups \cite{MR1937204},
 the anti-spherical Hecke categories of types $(A_n , A_{n-k} \times A_k)$ and $(D_n, A_{n-1})$ \cite{elias2014hodgetheorysoergelbimodules,antiLW}, and the (generalised) Khovanov arc algebras \cite{MR2918294,TypeDKhov}.
These (generalised) Khovanov arc algebras are of particular interest as they serve as a conduit for moving geometric and categorical insight into the study of knot theory  \cite{Khov,TypeDKhov,Ehrig_Stroppel_2016,ehrig2017category,heidersdorf2024khovanovalgebrastypeb}. 

In this paper and its forthcoming companion, we address both of the above questions for the anti-spherical Hecke categories of isotropic Grassmannians, building on the work of \cite{ChrisDyckPaper} for the classical Grassmannians. In this paper, we answer the first question by providing an Ext-quiver and relations presentation of their basic algebra, $\mathcal{H}_{(D_n,A_{n-1})}$. 
We upgrade the existing type $(D_n,A_{n-1})$ decorated cup-cap combinatorics \cite{MR2813567,LEJCZYK_STROPPEL_2013} into a meta Kazhdan--Lusztig combinatorics rich enough to encode the full representation theoretic structure.
 The main theorem of the companion paper will utilise the Ext-quiver and relations presentation established here to answer the second question affirmatively in the case of $\mathcal{H}_{(D_n, A_{n-1})}$ and the generalised Khovanov arc algebras in type $D$ (using results of \cite{TypeDKhov} to complete the geometric picture); we do this by constructing an explicit $\ZZ$-graded isomorphism between these (basic) algebras. 
 The isomorphism itself is strikingly elementary; this simplicity serves to further emphasise the effectiveness of the aforementioned combinatorial methods 
 within Kazhdan--Lusztig theory. 

This paper adapts the methods of \cite{ChrisDyckPaper} to the isotropic case in order to allow us to express the generators of $\mathcal{H}_{(D_n, A_{n-1})}$ in terms of decorated oriented cup-cap  diagrams. This is achieved by employing the Temperley–Lieb tilings introduced in \cite{bowman2023orientedtemperleyliebalgebrascombinatorial}. We then state and prove the main theorem of this paper:

\begin{thmA}
\label{theorem A}
 For any integral domain $\Bbbk$ containing $i\in\Bbbk$ such that $i^2=-1$, the $\Bbbk$-algebra $\mathcal{H}_{(D_n, A_{n-1})}$ admits a quadratic presentation, phrased in terms of decorated oriented cup-cap combinatorics, with generators strictly in degrees $0$ and $1$, subject to the relations \eqref{rel1}--\eqref{adjacentcup}. 
If $\Bbbk$ is a field, this constitutes the presentation of $\mathcal{H}_{(D_n, A_{n-1})}$ by its Ext-quiver and relations.
\end{thmA}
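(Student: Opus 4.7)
The plan is to adapt the strategy of \cite{ChrisDyckPaper} from the classical Grassmannian case (type $A$) to the isotropic case, handling the new $D$-type decorations via the Temperley--Lieb tilings of \cite{bowman2023orientedtemperleyliebalgebrascombinatorial}. First I would identify an explicit set of generators of $\mathcal{H}_{(D_n, A_{n-1})}$ as follows: the degree 0 generators are the primitive idempotents indexed by decorated weight diagrams, and the degree 1 generators are the elementary morphisms corresponding to Bruhat covers, realised as decorated cup-cap diagrams in which a single $\down\up$ pair is transposed. The translation between the Soergel calculus generators (spots, forks, braids) and the cup-cap diagrammatics is furnished by the tiling dictionary; the role of $i\in\Bbbk$ with $i^2=-1$ is to provide the scalars in the relations that encode the type $D$ sign conventions on orientations.

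Next I would verify that the proposed relations \eqref{rel1}--\eqref{adjacentcup} hold inside $\mathcal{H}_{(D_n, A_{n-1})}$. Each relation translates under the tiling dictionary into an identity among Bott--Samelson morphisms that reduces to the standard Elias--Williamson relations, exactly as in \cite{ChrisDyckPaper} but with additional bookkeeping for the decorations. This produces a surjective graded $\Bbbk$-algebra homomorphism from the abstract algebra $A$ presented by the stated generators and relations onto $\mathcal{H}_{(D_n, A_{n-1})}$.

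The substantive step is completeness, that is, upgrading this surjection to an isomorphism. The plan is to use the relations to construct inside $A$ a spanning set indexed by pairs of compatible decorated oriented cup-cap diagrams, a double-leaves-type normal form, and then to match ranks with the known dimension of $\mathcal{H}_{(D_n, A_{n-1})}$ provided by the parabolic Kazhdan--Lusztig polynomials in this Hermitian symmetric pair case. The hard part will be the reduction-to-normal-form procedure: the $D$-type decorations introduce genuinely new behaviour not present in the type $A$ argument of \cite{ChrisDyckPaper}, and the adjacent-cup relation \eqref{adjacentcup} together with the dot-sliding relations must be shown sufficient to propagate decorations past every cup-cap intersection in a consistent way. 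Once this rank match is achieved, the final sentence is automatic: a graded presentation of an algebra over a field $\Bbbk$ whose degree 0 part is a sum of orthogonal idempotents, whose other generators are concentrated in degree 1, and whose relations are quadratic, is by definition the Ext-quiver and relations presentation of the basic algebra.
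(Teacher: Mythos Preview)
Your broad architecture---verify the relations hold, then establish completeness---matches the paper, but two points deserve comment.

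First, you misidentify where $i$ enters. The relations \eqref{rel1}--\eqref{adjacentcup} themselves involve only signs $(-1)^{b(\cdot)}$, not $i$. The square root of $-1$ is needed because the paper's verification of the self-dual and adjacency relations proceeds by an induction on the breadth of the relevant cup: one first checks the base case $b(\cp)=1$ directly from the Elias--Williamson relations (via oriented-tableau calculus and null-braid manipulations), and then reduces larger breadth to smaller breadth using the \emph{contraction isomorphisms} $\Phi_k : 1_k\mathcal{H}_{(D_n,A_{n-1})}1_k \to \mathcal{H}_{(D_{n-2},A_{n-3})}$ of \cref{Contraction Sect}. These $\Phi_k$ rescale the degree~1 generators by $\pm i$ depending on whether the contracted vertex pair sits under a local cup or a local cap of $\cp$, and tracking the resulting sign/parity bookkeeping is exactly what makes the breadth induction go through. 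Your proposal to reduce everything ``via the tiling dictionary'' to standard Soergel relations is correct in spirit for the minimal-breadth cases, but you are missing this contraction mechanism, which is the key new technical device handling the type~$D$ phenomena (decorated cups, double covering) that do not appear in \cite{ChrisDyckPaper}.

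Second, your completeness strategy---build a double-leaves normal form inside the abstract algebra $A$ and match ranks against the known Kazhdan--Lusztig dimension---is a legitimate route but differs from the paper's. The paper instead works directly in $\mathcal{H}_{(D_n,A_{n-1})}$, where the light-leaves basis is already available (\cref{cellular basis}), and shows by a double induction (on the number of degree~1 factors and on $\ell(\la)+\ell(\nu)$) that any product of degree~1 generators can be straightened into that basis \emph{using only} \eqref{rel1}--\eqref{adjacentcup}. This avoids the separate construction of a normal form in $A$: once you know the relations suffice to reach a linearly independent set, the surjection $A\twoheadrightarrow\mathcal{H}$ is forced to be an isomorphism. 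Your rank-matching approach would also work but requires carrying out the reduction-to-normal-form inside $A$, which is the same combinatorics done in a less convenient setting.
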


We hope that this Ext-quiver and relations presentation will advance our understanding of 
related symmetric algebras  and standard extension algebras in the manner of \cite{bowman2024quiverpresentationsschurweylduality,Strop-Eber}.  
This presentation is particularly powerful because it not only enables us to understand the graded composition factors of standard modules (which are governed by the Kazhdan--Lusztig polynomials), but it also allows us to explicitly determine the extensions between composition factors, entirely within our  meta Kazhdan--Lusztig combinatorial language:

\begin{thmB}
Over a field $\Bbbk$, the submodule structure of a cell module of $\mathcal{H}_{(D_n, A_{n-1})}$ can be calculated in terms of  decorated oriented cup-cap combinatorics. Additionally, the structure is independent of the characteristic of $\Bbbk$.
\end{thmB}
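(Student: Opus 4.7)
The plan is to extract Theorem B as a direct consequence of the Ext-quiver and relations presentation of Theorem A. Since $\mathcal{H}_{(D_n, A_{n-1})}$ is the basic algebra of a cellular Hecke category, each cell module $\Delta(\lambda)$ admits a canonical basis in bijection with decorated oriented cup diagrams of shape $\lambda$. First I would realise $\Delta(\lambda)$ concretely on the $\Bbbk$-span of these cup diagrams, translating each of the degree-$0$ and degree-$1$ generators from Theorem A into an explicit local move on the diagrams; this reduces the submodule question to one about combinatorial operators acting on an explicit, diagrammatically indexed basis.

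The structural input that makes the submodule lattice legible is that the presentation of Theorem A is \emph{positively graded}, with generators confined to degrees $0$ and $1$ and with the degree-$0$ part a direct sum of copies of $\Bbbk$ spanned by a complete set of pairwise orthogonal primitive idempotents (labelled, again, by cup-cap data). For any such algebra the Jacobson radical is the two-sided ideal generated by the degree-$1$ arrows, and hence the radical filtration of any graded cell module coincides with its grading filtration. Combined with the diagrammatic description of the arrow action from the quadratic relations \eqref{rel1}--\eqref{adjacentcup}, this gives an algorithm which writes down the full submodule lattice, together with its composition factors, entirely in the meta Kazhdan--Lusztig language.

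The characteristic-independence assertion then comes for free: the relations \eqref{rel1}--\eqref{adjacentcup} are defined uniformly over $\mathbb{Z}[i]$, so there is an integral form of $\mathcal{H}_{(D_n, A_{n-1})}$ from which the algebra over any field $\Bbbk$ containing $i$ is obtained by base change. The graded pieces of $\Delta(\lambda)$, the simple heads, and the arrow actions on cup diagrams are all prescribed combinatorially, without reference to the ambient field, so the grading filtration, and hence the submodule lattice, is the same for every such $\Bbbk$.

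The main obstacle is verifying that the degree-$1$ arrows of the Ext-quiver act non-trivially on \emph{each} successive graded layer of $\Delta(\lambda)$, so that the grading filtration is sharp and genuinely realises the radical filtration rather than a coarser one. This requires a case-by-case diagrammatic check that no composition of an arrow with a cup-diagram basis vector vanishes unexpectedly; the argument ultimately reduces to the normal-form and faithfulness properties that underlie the quadratic presentation of Theorem A, together with the observation that all non-zero structure constants appearing in the combinatorial action are invertible in any field $\Bbbk$ containing $i$.
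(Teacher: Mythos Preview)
Your proposal is correct and follows essentially the same approach as the paper: exploit the positive grading and generation in degrees $0$ and $1$ to reduce the submodule structure to the action of degree-$1$ generators on the cell-module basis, then verify non-triviality of that action case by case. The paper resolves your ``main obstacle'' by an explicit induction on $\deg(\underline{\mu}\lambda)$, using only the commuting and (doubly) non-commuting relations \eqref{commuting}--\eqref{noncommutingcup2} together with the factorisation of light leaves, to prove that $D^\nu_\mu D^\mu_\lambda = D^\nu_\lambda$ with coefficient exactly~$1$ whenever $\mu = \nu \pm \cp$ and $\underline{\nu}\lambda$ is oriented; this sharpens your ``invertible structure constants'' to ``coefficients equal to $1$'', which is what makes the characteristic-independence immediate (note that the self-dual relation, with its factors of~$2$, is never invoked in the cell-module computation).
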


An example of this module structure can be found in \cref{submodule}.

\subsection{Structure of this paper.} The goal of this paper is to adapt the methods and results of \cite{ChrisDyckPaper} into parabolic type $(D_n,A_{n-1})$; hence we will follow a similar structure to that paper and, where possible, our notation has been chosen to coincide. In \cref{Type D Combinatorics} we review the combinatorics of parabolic Coxeter systems for signed permutation groups in terms of tilings and oriented Temperley--Lieb diagrams from \cite{MR2813567,LEJCZYK_STROPPEL_2013,bowman2023orientedtemperleyliebalgebrascombinatorial}. 
In \cref{Cup combinatorics}, we develop a meta Kazhdan--Lusztig version of the decorated oriented cup-cap combinatorics.
 In \cref{Hecke} we recall the definition of the  path-algebra of $\mathcal{H}_{(D_n, A_{n-1})}$ via diagrammatic generators and relations.
 In \cref{gens} we lift the    combinatorics of 
  \cref{Type D Combinatorics,Cup combinatorics} 
  to give us a basis and generators of 
  $\mathcal{H}_{(D_n, A_{n-1})}$ 
   before providing the exact statement of \cref{theorem A}. 
 In \cref{Contraction Sect} we take a detour to  discuss a method of contracting larger
      oriented cup-cap diagrams and Hecke categories to smaller rank, which will significantly  simplify  \cref{proof} in which we provide the proof of \cref{theorem A}.
       Finally, in \cref{structure} we detail how to use the presentation from {Theorem A} to recover   the 
       full submodule structure of the cell modules of $\mathcal{H}_{(D_n, A_{n-1})}$.

 \smallskip
  \section{Type D Kazhdan--Lusztig Combinatorics}
  \label{Type D Combinatorics}
     To keep this paper as self-contained as possible, we begin by reviewing the Kazhdan--Lusztig combinatorics of parabolic Coxeter systems for signed permutation groups, borrowing heavily from the language and conventions of \cite{ChrisDyckPaper, bowman2023orientedtemperleyliebalgebrascombinatorial, ChrisHSP}.

    \subsection{Cosets, weights and tile-partitions}
    We let $W(C_n)$ denote the group of signed permutations of $\{1,\dots,n\}$  generated by  the 
    set of permutations $\{s_{0'},s_1,s_2,\dots , s_{n-1}\}$ with Coxeter relations encoded in the leftmost diagram of \cref{coxeterlabelD2}.
    We let  ${W(D_n)}$ denote the subgroup of even signed permutations of $\{1,\dots,n\}$  generated by  the 
    set of permutations $\{s_{0},s_1,s_2,\dots , s_{n-1}\}$ where $s_0=s_{0'}s_1s_{0'}\in {W(C_n)}$, with Coxeter relations encoded in the rightmost diagram of \cref{coxeterlabelD2}. 
We let ${W(A_{n-1})}$  denote the subgroup (of ${W(D_n)}\leq {W(C_n)}$)   generated by the reflections $\{s_1, s_2,\dots, s_{n-1}\}$.
     
 \begin{figure}[ht!]
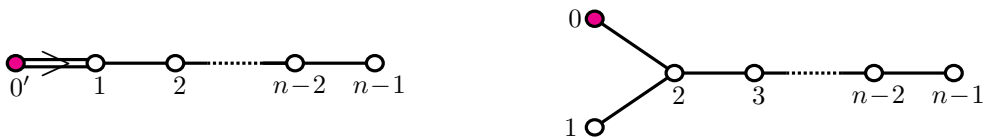

 \begin{minipage}{6cm}

 \end{minipage}
 \caption{ The Dynkin diagrams of the groups ${W(C_n)}$ and $ {W(D_n)}$ with the labelling to be used throughout this paper, with the node not belonging to the parabolic of type $A_{n-1}$ highlighted in pink.}
\label{coxeterlabelD2}
\end{figure}

 We define a \textsf{weight} to be a horizontal strip with $n$ vertices at the half-integer $x$-coordinates $\{\frac{1}{2},\frac{3}{2},...,n-\frac{1}{2}\}$ each labelled by either $\up$ or $\down $. 
 We let $\{s_i \mid 1 \leq i \leq n-1\}$ act by permuting the labels at $i-\frac{1}{2}$ and $i+\frac{1}{2}$ and fixing every other node;
we let $s_{0'}$ flip the label at the 1st node, at $x=\frac{1}{2}$, through its horizontal axis (that is $s_{0'}(\vee) = \wedge$).  
In this manner, the generator $s_{0}=s_{0'}s_1s_{0'}$ can be seen to act by 
first flipping the labels at $x=\frac{1}{2}$ and $x=\frac{3}{2}$ through the horizontal axis and then permuting their order 
(that is $s_{0}(\up\up) = \down \down$,  $s_{0}(\down\down) = \up\up$, $s_{0}(\vee\wedge) = \vee\wedge$ and $s_{0}( \wedge\vee) =  \wedge\vee$).

 For a parabolic Coxeter system $(W,P)$ we denote by $^PW$ the set of minimal length right coset representatives of $P\setminus W$. The action on weights described above allows us to label the entire set of cosets of $P\setminus W$  for $P=W(A_{n-1})$ and $ W=W(C_n)$
  via the set of all weights.
  Similarly, we can encode a set of minimal length coset representatives $P\setminus W$  for $P={W(A_{n-1})}$ and $W={W(D_n)} $ 
  via the subset of all weights 
   with an {\em even} number of vertices labelled $\up$.
  Specifically, we set the identity weight,  $\emptyset$, to be the weight consisting of  $n$ $\vee$'s. 
    The other ($2^{n-1}-1$) cosets can be obtained by permuting the labels of the identity weight.
With this identification in place, we shall, for the remainder of this paper, set $(W,P)=(W(D_n),W(A_{n-1}))$ with generators as above.

    We can also visualise the elements of $^PW$ as tilings within a restricted admissible region: $$\mathscr{A}_{(W,P)}\coloneqq\{ [r,c] \mid r,c \leq  n \text{ and }r-c\geq 0 \},$$ where we draw these tilings ``Russian Style" with rows (fixed values of $r$) pointing northwest and columns (fixed values of $c$) pointing northeast. 
Our convention will be to denote each tiling by the numbers of tiles appearing in each fixed row.
We say that two tiles
  are {\sf neighbouring} if they meet  at an edge.
 Given a pair of neighbouring tiles $X$ and $Y$, we write $Y < X$ if $X$ appears above $Y$ (i.e. the $y$-coordinate of $X$ is strictly larger than that of $Y$). This extends to a partial order on all tiles in $\mathscr{A}_{(W,P)}$ and we say $Y$ {\sf supports} $X$ if $Y<X$ in this ordering. 
 We say that a collection of tiles, $\la\subseteq \mathscr{A}_{(W,P)}$ is a
  {\sf tile-partition} if every tile in $\la$ is
   supported and we denote the set of tile-partitions $\mptn$.  
   We define the {\sf length} of a tile-partition, $\ell(\la)\in\mptn$, to be the total number of tiles $[r,c]\in \la$.  
For $\lambda  ,\mu  \in \mptn$, we define the {\sf Bruhat order} on tile-partitions by
 $\lambda  \leq \mu $ if  
$$\{[r,c] \mid [r,c] \in \lambda \}
\subseteq \{[r,c] \mid [r,c] \in \mu \}.$$

  We additionally draw a tile-partition, $\la$, by colouring the tiles of $\la$ corresponding to the generators of $W$. Each tile, $[r,c]$, inherits a coloured label from the Dynkin diagram of $W$, determined by the $x$-coordinate of the tile
   (further refined by the $x$-coordinate for $s_0$ and $s_1$), which we call the {\sf content} of $[r,c]$, ${\sf ct}([r,c])$. Specifically the tile $ [r,c] $ has colour:
$$  \begin{cases}
s_0 & \text{if $r-c=0$ and $r=2k+1$} \\
s_1 & \text{if $r-c=0$ and $r=2k$} \\
s_i & \text{if $r-c=i-1>1$.} \\
\end{cases}
$$

 \begin{figure}[ht!]
 \centering


\caption{
On the left we depict the 
 identity weight $\emptyset$ along the bottom of the diagram, 
  the weight of $\mu=(1,2,3,4,5,3,3)$ along the top of the diagram. On the right we depict the tile-partition $\mu$ which corresponds to the coset labelled by the reduced word
  $\color{gray}s_0
    \color{red}s_2  
      \color{blue}s_3
          \color{cyan}s_4
          \color{magenta}s_5
  \color{lime!80!black}s_6
      \color{yellow}s_7  
    \color{darkgreen}s_1
        \color{red}s_2  
      \color{blue}s_3
          \color{cyan}s_4
          \color{magenta}s_5
  \color{lime!80!black}s_6
      \color{gray}s_0
    \color{red}s_2  
      \color{blue}s_3
          \color{cyan}s_4
          \color{magenta}s_5
            \color{darkgreen}s_1
        \color{red}s_2  
         \color{gray}s_0$. We also number the tiles of $\mu$ as to correspond to the standard tableau $\stt_\mu$.
}
\label{typeAtiling-long}
\end{figure}

For $\la\in\mptn$, we define a standard tableau of shape $\la$ to be a bijection $\stt$ from the set of tiles of $\la$ to the set $\{1, 2, \dots, \ell(\la)\}$ such that for each $1\leq k \leq \ell(\la)$, the set of tiles $\stt^{-1}(\{1, . . . , k\})$ is a tile-partition. In practice, this is a filling of the tiles of $\la$ by the number $1$ to $\ell(\la)$ such that the numbers increase along rows and columns. We denote by ${\rm Std}(\la)$ the set of all tableaux of shape $\la$ and we let $\stt_\la \in {\rm Std}(\la)$ denote the tableau in which we fill in the tiles of $\la$ first according to increasing $y$-coordinate and then to increasing $x$-coordinate.

Given $\lambda \in \mptn $, we define the set ${\rm Add}(\lambda)$ to be the set of all tiles $[r,c]\notin \lambda$ such that $\lambda \cup [r,c]\in \mptn $. Similarly, we define the set $ {\rm Rem}(\lambda)$ to be the set of all tiles $[r,c]\in \lambda$ such that $\lambda \setminus [r,c] \in \mptn $. We note that a tile-partition $\lambda$ has at most one addable or removable tile of each content.

\begin{rmk}
Above we have detailed how to label a coset by a weight diagram or a tile-partition. We now explain how to go between the two.  
Given a weight diagram, start by reading the labels of a weight diagram from right to left. Starting at the rightmost corner of the admissible region, trace out a north-western step for each $\down$ and a south-western step for each $\up$. After $n-1$ steps, we reach the left-hand ``jagged" wall of the admissible region, having traced out the northern perimeter of the tile-partition. In particular, the identity coset corresponds to the weight diagram labelled by $n$ $\down$'s, tracing the perimeter of the empty tile-partition $\emptyset$. 
The leftmost label is not used in drawing the shape of the tile-partition, but can be used to quickly colour the top left tile. If it is $\up$, then the top left tile will be coloured $s_0$, if $\down$, then it will be coloured $s_1$.
This process can be reversed in the obvious manner.
\end{rmk}

  \begin{rmk}\label{bijection remark}There is a natural bijection between ${^P}W$ and 
$\mptn$ (see   \cite[Appendix]{MR3363009}) under which the length functions coincide. 
\end{rmk}

Throughout the paper, we will identify minimal coset representatives both with their weight diagrams and tile-partitions.

\subsection{Temperley--Lieb diagrams of type $D$}
A {\sf Temperley--Lieb  diagram of type $D_n$} is a rectangular
	frame with  $n$ vertices along   the top and $n$  along the bottom 
	which are paired off by non-crossing strands. 
	Any strand which can be deformed to the leftmost edge of the diagram while not intersecting another strand can carry a dot, $\begin{tikzpicture}\draw[very thick, fill=black](0,0) circle (1.7pt);\end{tikzpicture}$, and we refer to any strand carrying a dot as being {\sf decorated}.
   We refer to a strand connecting a top and bottom vertex as a {\sf propagating strand}
 (or {\sf propagating ray}).
We refer to any strand connecting two top vertices as a  {\sf cup} and any strand connecting two bottom vertices as a {\sf cap}.   
 In \cite{MR1618912}, Green proves that multiplication in the type $D$ Temperley--Lieb algebra, ${\rm TL(D_n)}$, can be understood as
vertical concatenation of Temperley--Lieb diagrams subject to rules for removing closed loops.
In this paper, we will only consider the multiplication of diagrams which do not result in the formation of closed loops.  
Therefore, we will only require the generators (not the relations) of this algebra, which we now recall.

    We define ${\sf e}_i$ for $i\geq 1$ to be the undecorated Temperley--Lieb diagram with  
a single pair of arcs connecting the $i^{{\text th}}$ and $(i+1)^{{\text th}}$ northern (respectively  southern) vertices,  
and with 
$(n-2)$ vertical strands.  
We set ${\sf e}_{0}$ to be the Temperley--Lieb diagram
which has a single pair of arcs connecting the $1^{{\text st}}$ and $2^{{\text nd}}$ northern (respectively  southern) vertices, both of which carry a single decoration, and 
with 
$(n-2)$ vertical undecorated strands.
Our notation has been chosen to match the Coxeter graph in \cref{coxeterlabelD2}.
Examples of these elements are depicted in \cref{gens1231231}.

  \begin{figure}[ht!]
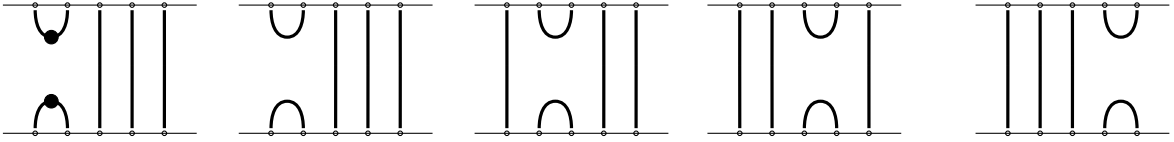

   
	\caption{
 The generators   ${\sf e}_{0}$, ${\sf e}_1$,  ${\sf e}_2$,  ${\sf e}_3$,  ${\sf e}_4$        
 of the   Temperley--Lieb algebra of  type $D_5$.
	}
	\label{gens1231231}
\end{figure} 

 \begin{defn}\label{tiling}
 For each   $\mu\in\mptn$,  we define a Temperley--Lieb diagram ${\sf e}_\mu$ created by filling the boxes of the tile-partition $\mu$ with the corresponding Temperley--Lieb generators and then isotopically deforming the strands. We note that the bottom edge of any such diagram ${\sf e}_\mu$ consists solely of decorated (non-concentric) arcs on the left and some number of vertical strands (the leftmost of which may or may not be decorated). An example is depicted in \cref{on top}.  We write $[\mu]$ for the non-deformed tiled diagram (the leftmost diagram in \cref{on top}). 
 \end{defn}
  
\begin{figure}[ht!]
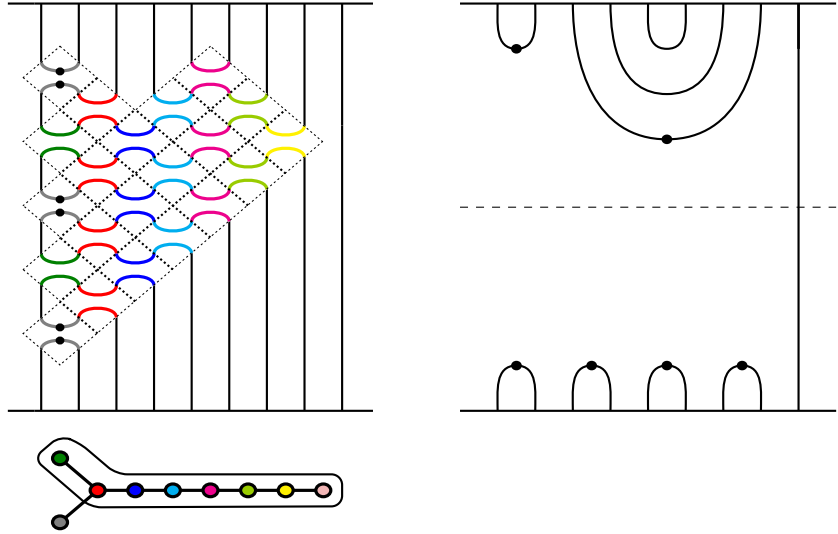

\centering
$$

$$
    \caption{On the left, we depict the tiled Temperley--Lieb diagram $[\mu]$ of type $D_9 $ $\mu=(1,2,3,4,5,3^2)$. By isotopically deforming the strands in $[\mu]$, one can obtain the Temperley--Lieb diagram, ${\sf e}_{\mu}$ on the right. }
\label{on top}
\end{figure}

\begin{defn}
\label{localness}
 Given a strand, $S$,
  in ${\sf e}_\mu$ and a tile $[r,c] \in \mu$ we say that 
  $S \cap[r,c]$ is
  \begin{itemize}[leftmargin=*]
\item  {\sf  empty} if $S$ does not intersect $[r,c]$ in the diagram $[\mu]$
\item {\sf  a local cup} if the   intersection of $S$ 
with $[r,c]$ in the diagram $[\mu]$ connects the two northern edges of the tile $[r,c]$. 
 \item {\sf  a local cap} if the   intersection of $S$ 
with $[r,c]$ in the diagram $[\mu]$ connects the two southern edges of the tile $[r,c]$. 
  \item {\sf  a local cup-cap} if the   intersection of $S$ 
 with $[r,c]$ in the diagram $[\mu]$ connects both the two northern edges and the two southern edges of the tile $[r,c]$. 
 \end{itemize}
 \end{defn}

 \begin{rmk}
 \label{even}
 A local cup-cap only occurs on strands that are decorated and within tiles $[r,c]$ with ${\sf ct}([r,c])=s_{2k}$ for some $k\in \mathbb{Z}_{>0}$. 
 \end{rmk}
 
 \begin{eg}
 \label{examplesss}
In the diagram of the tile-partition $(1,2,3)$ below, ${\color{cyan}S} \cap [3,1]$ is a local cup, ${\color{cyan}S} \cap [3,3]$ is a local cap and ${\color{cyan}S} \cap [3,2]$ is a local cup-cap.

\center

  \end{eg}
 
\vspace{0.1cm}

 \begin{defn}
We let $\underline{\mu}$ denote the top half of the diagram of  ${\sf e}_\mu$ obtained by deleting the southern edge of the frame and all southern arcs (but not the propagating strands).  We refer to $\underline{\mu}$ as the {\sf cup} diagram for $\mu$.  
 We let  $\overline{\mu}$ denote the diagram obtained from  $\underline{\mu}$ by flipping it through the horizontal axis and refer to this as the {\sf cap} diagram associated with $\mu$.
 \end{defn}
 
 An example is depicted in \cref{construction}.

\subsection{Oriented Temperley--Lieb diagrams and Kazhdan--Lusztig polynomials } 
  Let $\nu,\mu\in\mptn$ and $e_\mu$ be a Temperley--Lieb diagram of type $D_n$. We can form a new diagram $\emptyset e_\mu \nu$ by glueing the weight $\emptyset$ under $e_\mu$ and the weight $\nu$ on top of $e_\mu$. 
We say that a strand connecting two weights is {\sf oriented} if one of the weights points into the strand and the other points out of the strand. A strand is {\sf flip--oriented} if its two weights either both point in or both point out of the strand.
We say that $\emptyset e_\mu\nu$ is an {\sf oriented Temperley--Lieb diagram} if each undecorated strand in $\emptyset e_\mu\nu$ is oriented and each decorated strand in $\la e_\mu\nu$ is flip-oriented.

\begin{defn}
We define the {\sf oriented cup diagram} $\underline{\mu} \la$ to be the top half of the diagram   $ \emptyset  {\sf e}_\mu   \la  $ if this is oriented (and otherwise leave this undefined).

For each strand $S$ in $\underline{\mu} \la$, we define $l_S$ to be the $x$-coordinate of the leftmost vertex of $S$ and $r_S$ to be the $x$-coordinate of the rightmost vertex of $S$. Recall that we have defined weights to be at the half-integer coordinates so $l_s, r_s \in \frac{1}{2}+\mathbb{Z}$.
 \end{defn}
 
 Some examples are depicted in \cref{removal} where the highlighted cup $\cp\in\underline{\mu}$ has $l_{\cp}=\frac{7}{2}$, $r_\cp=\frac{13}{2}$.
 
  \begin{defn}
Let $\mu\in \mptn$. Given a cup $\cp  \in \underline{\mu}$ we define its {\sf breadth}, $b(\cp)$, as follows:
 
If $\cp$ is undecorated, then $b(\cp)=\frac{1}{2}(r_\cp-l_\cp+1)$.
  
  If $\cp$ is decorated, then $b(\cp)=\frac{1}{2}(r_\cp+l_\cp)$
 \end{defn}

\begin{defn}
\label{degrees}
   Let $\lambda, \mu$ be weights such that $\underline{\mu}\lambda$ is oriented. We define the {\sf degree} of the oriented cup diagram $\underline{\mu}\lambda$ 
   to be the number of cups whose right vertex is labelled by $\down$ 
in the diagram, that is
\[\deg(\underline{\mu}\lambda)=
		\sharp\left\{  \begin{minipage}{1.2cm}
			\begin{tikzpicture}[scale=0.55]
				\draw[thick](-1,-0.155) node {$\boldsymbol\up$};
				\draw[very thick](0,0.1) node {$\boldsymbol\down$};
				\draw[very thick]  (-1,-0) to [out=-90,in=180] (-0.5,-0.75) to [out=0,in=-90] (0,0);
			\end{tikzpicture}
		\end{minipage}
			\right\}	+
\sharp \left\{	\begin{minipage}{1.2cm}
			\begin{tikzpicture}[scale=0.55]
				\draw[thick](-1,0.1) node {$\boldsymbol\down$};
				\draw[very thick](0,0.1) node {$\boldsymbol\down$};
				\draw[very thick]  (-1,-0) to [out=-90,in=180] (-0.5,-0.75) to [out=0,in=-90] (0,0);
				\draw[very thick, fill=black](-0.5,-0.75) circle (4pt);
			\end{tikzpicture}
		\end{minipage}
		\right\}.	 \]
We also refer to these cups, whose right vertex is labelled by $\down$, as being {\sf clockwise oriented}. Similarly, we refer to cups, whose right vertex is labelled by $\up$, as being {\sf anti-clockwise oriented}. In this sense, one can view a decoration on a cup as an orientation-reversing point.
 \end{defn}

For the purposes of this paper, for $p\geq 0$, we can define the $p$-Kazhdan--Lusztig polynomials of type $(W,P) = (D_n, A_{n-1})$ as follows.  
For $\la,  \mu \in \mptn $ we set
$$
{^p}n_{\la,\mu}(q)= 
\begin{cases}
q^{\deg(\underline{\mu} \la)}		&\text{if $ \underline{\mu} \la $ is oriented}\\
0						&\text{otherwise.}
\end{cases}
$$
We refer to \cite[Theorem 7.3]{bowman2023orientedtemperleyliebalgebrascombinatorial} 
and \cite[Theorem A]{ChrisHSP}
for a justification of this definition and to \cite{MR2813567,MR2918294} 
for the origins of this combinatorics.

\begin{rmk}
\label{flipper}
We say we {\sf flip} a cup if we reflect both the vertices of the cup through the $x$-axis so as to change the orientation of the cup from anti-clockwise to clockwise or vice versa.
 It is clear that for a fixed $\mu\in \mptn $, the diagram $\underline{\mu}\lambda$ is oriented of degree $d$ if and only if the weight $\lambda$ is obtained from the weight $\mu$ by flipping $d$ anti-clockwise cups in $\underline{\mu}$. There are precisely 2 ways of doing this; either flipping undecorated or decorated anti-clockwise cups. Pictorially, this looks like
$$
(i)\begin{minipage}{1.2cm} 

$$
\caption{
  The flipping of the vertices of the cup $\cp$ from $\underline{\mu}$, on the left, creates the weight $\la$ on the right. Hence we say $\la=\mu-\cp$, where we have drawn $\underline{\la}$ using \cref{drawcups}
  }
\label{removal}
\end{figure}

The following proposition was proven in \cite{bowman2023orientedtemperleyliebalgebrascombinatorial}
but this is because our treatment is non-chronological; this proposition can also be found as a   definition 
 in  \cite{MR2813567,TypeDKhov}.

  \begin{prop} 
  \label{drawcups}
 For each weight $\lambda$ we can construct the {\sf cup diagram} $\underline{\lambda}$ 
 using the following algorithm.
 \begin{enumerate}[leftmargin=*,label=(\roman*)]
\item Find a pair of vertices labelled $\down$ $\up$ in order from left to right that are neighbours in the sense that there are only vertices already joined by cups in between. Join these new vertices together with an undecorated cup.
\item Repeat this process until there are no more such $\down$ $\up$ pairs. 
\item Then, starting from the left, connect neighbouring vertices labelled with $\up$ $\up$ with a decorated cup. 
\item Finally, draw propagating rays down at all the remaining $\up$ and $\down$ vertices. If there exists a (necessarily unique) $\up$ vertex, then decorate the corresponding propagating ray. 
\end{enumerate}
\end{prop}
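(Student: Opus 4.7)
The plan is to proceed by induction on $\ell(\lambda)$, adding tiles to the tile-partition one at a time in the order prescribed by the standard tableau $\stt_\lambda$. For the base case $\lambda = \emptyset$, both constructions produce the identity diagram of $n$ undecorated propagating rays: the geometric diagram because there are no tiles, and the algorithm because the weight is $n$ copies of $\down$, rendering steps (i)--(iii) vacuous and step (iv) producing exactly $n$ undecorated propagating rays.

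For the inductive step, let $[r,c]$ be the final tile of $\stt_\lambda$, let $s_i = {\sf ct}([r,c])$, and set $\mu = \lambda - [r,c]$. Geometrically, $[\lambda]$ is obtained from $[\mu]$ by stacking ${\sf e}_{s_i}$ on top at positions $i-\tfrac12, i+\tfrac12$; consequently $\underline{\lambda}$ is obtained from $\underline{\mu}$ by capping off the two strands of $\underline{\mu}$ arriving at those two positions and attaching a new cup (decorated if $i=0$, undecorated otherwise) across the top, with the remaining strands of $\underline{\mu}$ unchanged. One now splits into three cases based on $i$. When $i \geq 2$ or $i = 1$, the relation $\lambda = s_i \mu$ with $\ell(\lambda) > \ell(\mu)$ forces $\mu$ to carry labels $\down\up$ at the two affected positions (so $\lambda$ reads $\up\down$ there), and one verifies that the new undecorated cup coincides with the innermost matching produced at step (i) of the algorithm applied to $\lambda$, while all previously matched cups of $\underline{\mu}$ remain exactly the matchings that the algorithm on $\lambda$ produces for the untouched positions. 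When $i = 0$, the labels of $\mu$ at positions $\tfrac12, \tfrac32$ are $\down\down$; by induction these are the two leftmost undecorated propagating strands of $\underline{\mu}$, and gluing ${\sf e}_0$ on top produces a decorated cup joining them, matching step (iii) of the algorithm applied to $\lambda$ at its new leftmost $\up\up$ pair.

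The main obstacle is bookkeeping the decoration parity of strands inherited from $\underline{\mu}$: a strand of $\underline{\mu}$ may acquire or lose a decoration when it passes through the base of the newly glued ${\sf e}_{s_i}$ or through a local cup-cap inside a tile of even content (cf.\ \cref{even}). One must check case by case that at each stage at most one propagating ray carries a decoration, in accord with the unique unmatched $\up$ produced at step (iv), and that decorations on cups occur precisely between pairs of $\up$'s as prescribed by step (iii). Both assertions reduce to tracking how the horizontal-flip action of $s_0$ and the transposition action of $s_i$ on weights are mirrored by the decoration changes along strands at the bottom of the newly glued generator; since these local modifications preserve the invariant that the total number of decorations along any strand is even modulo its passage through decorated tiles, the induction closes.
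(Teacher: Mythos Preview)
The paper does not prove this proposition itself; the sentence immediately preceding it cites \cite{bowman2023orientedtemperleyliebalgebrascombinatorial} for the proof and notes that the statement also appears as a definition in \cite{MR2813567,TypeDKhov}. So there is no in-paper argument to compare against.

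Your inductive strategy is natural, but there is a concrete error in the $i\geq 1$ case. The length increase $\ell(\lambda)>\ell(\mu)$ forces $\mu$ to carry $\up\down$ at positions $i,i{+}1$ and $\lambda$ to carry $\down\up$ there---the reverse of what you write. Check $n=3$: $\mu=(1)\leftrightarrow\up\up\down$ and $\lambda=(1,1)=\mu s_2\leftrightarrow\up\down\up$; positions $2,3$ read $\up\down$ in $\mu$ and $\down\up$ in $\lambda$. With your stated orientation the claim that ``the new undecorated cup coincides with the innermost matching produced at step~(i) of the algorithm applied to $\lambda$'' is incoherent, since an $\up\down$ pair is never matched at step~(i).

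Correcting the labels repairs that sentence---the new cup in $\underline{\lambda}$ really does sit at a $\down\up$ position of $\lambda$---but your assertion that ``the remaining strands of $\underline{\mu}$ are unchanged'' is then too strong. By the inductive hypothesis the $\up$ at position $i$ in $\mu$ is the right end of some cup reaching to a position $k<i$ (undecorated or decorated), or lies on a ray; likewise the $\down$ at $i{+}1$ is the left end of a cup reaching to some $l>i{+}1$, or lies on a ray. Stacking $e_{s_i}$ caps these two endpoints together, \emph{merging} those two strands into a single arc from $k$ to $l$ (or into a single ray, with decoration inherited from the pieces). You must verify, case by case, that this merged arc is exactly what the algorithm for $\lambda$ produces at $k$ and $l$. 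That case analysis---together with the decoration bookkeeping you correctly flag as the main obstacle---is where the real content lies, and your final paragraph does not carry it out.
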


  \begin{figure}[ht!]
  $$   
$$
\caption{The construction of the cup diagram $\underline{\la}$ for $\la$ as in \cref{drawcups}.
	 See also \cite[Proposition 7.1]{bowman2023orientedtemperleyliebalgebrascombinatorial}.}
\label{construction}
	\end{figure}

  \subsection{ Cup flipping on tile-partitions.}
  \label{cups gone}

We have seen that for  $\la,\mu\in \mptn $, the cup diagram $\underline{\mu}\lambda$ is oriented of degree $d$ if and only if the weight $\lambda$ is obtained from the weight $\mu$ by flipping $d$ cups in $\underline{\mu}$. We wish to consider what this corresponds to in terms of the tiled Temperley--Lieb diagrams.
We have seen that this can be done in two possible ways:

\begin{enumerate}
\item We can reorient an anti-clockwise  $\down\up$ cup , $S$, to obtain a $\up\down$ cup;
here the tile-partition $\la$ is obtained by deleting the set of all $[r,c]\in[\mu]$ such that 
$S\cap [r,c]$ is a local cup or a local cap. This removes a path of the shape below.

 $$\begin{minipage}{3.55cm}
\end{minipage}$$

In keeping with the terminology from \cite{ChrisDyckPaper,bowman2024quiverpresentationsschurweylduality} we refer to the tiles in $[\mu]$ whose removal corresponds to the flipping of weights of the cup $\cp \in \underline{\mu}$ as the \textsf{Dyck path generated by $\cp$}, denoted $\CP \subseteq [\mu]$, to make it clear we are talking about the set of tiles in $[\mu]$. See the pink and blue coloured tiles in \cref{gen Dyck} for an example of paths $\MQ$ and $\CP$ corresponding to decorated and undecorated cups, respectively.

\begin{defn}
Let $\CP \subseteq [\mu]$ be a Dyck path generated by the cup $\cp\in \underline{\mu}$. In the diagram $[\mu]$, starting at $l_\cp$, consider the tile first intersected with when travelling vertically downwards. We define \textsf{${\sf first}(\CP)$} to be the content of the first tile that the strand $\cp$ intersects $\mu$ with, when starting at $l_{\cp}$.
Similarly we define \textsf{${\sf last}(\CP)$} to be the content of the first tile that $\cp$ intersects $\mu$ with, when starting at $r_{\cp}$.
\end{defn}

For example, the Dyck path $\CP$, which is generated by the cup $\cp$, depicted in \cref{examplesss}, has ${\sf first}(\CP)=2$ and ${\sf last}(\CP)=3$.
\begin{rmk}
Note that  \textsf{${\sf last}(\CP)$} will always be the rightmost tile of $\CP$. If $\cp$ is undecorated, then \textsf{${\sf first}(\CP)$} is the leftmost tile of $\CP$ but the same does not hold for $\cp$ decorated.
\end{rmk}

 \begin{defn}
For a Dyck path, $\CP \subseteq [\mu]$ generated by the cup $\cp$, define the \textsf{content} of $\cp$ (resp. $\CP$), denoted {$\rm{ct}(\cp)$ (resp. $\rm{ct}(\CP)$}) for short, as the multi-set of the contents of the tiles in $\CP$.
 \end{defn}

 \begin{rmk}
For an undecorated cup $\cp$, we have that ${\sf ct} (P)= [ {\sf first}(\CP), {\sf last}(\CP)]$.  For a decorated cup $\cp$, ${\sf ct} (\CP)=\{1,1,2,2,\dots,{\sf first}(\CP),{\sf first}(\CP)\}\cup\{{\sf first}(\CP)+1,{\sf first}(\CP)+2,\dots, {\sf last}(\CP)\}$.
 \end{rmk}

\begin{defn}
\label{genuine}
Let $\mu \in \mptn $ and let $[\mu]$ be the corresponding tile diagram. 
Let $S$ be a strand in $\underline{\mu}$.
Given $[r,c] \in [\mu]$ 
if $S\cap [r,c]$ is a local cup we define 
$\langle [r,c]\rangle_\mu$ to be equal to $S$.

If $S\cap [r,c]$ is not a cup then we leave $\langle [r,c]\rangle_\mu$ undefined, even if $S\cap [r,c]$ is non-empty. 
See \cref{gen Dyck} for an example of this.

\begin{figure}[ht!]
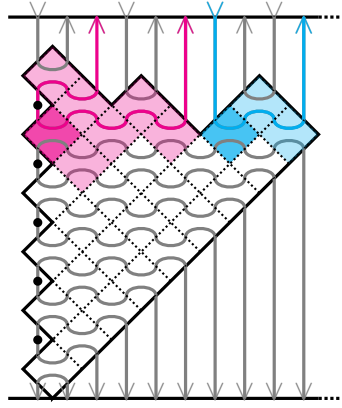


\caption{ The cups $\langle r,c\rangle _\mu$ for $[r,c]  =\color{magenta} [5,5]$
and 
$\color{cyan} [8,2]$ for $\mu=(1,2,3,4,5,6,4,2,2)$. 
We note that, for instance, the strand $\langle 6,5\rangle _\mu$ does not exist as it is a local cup-cap, defined in \cref{localness}.
}
\label{gen Dyck}
\end{figure}

\end{defn}

      \section{Oriented Cup combinatorics}
      \label{Cup combinatorics}
   
In this section, we set up the meta Kazhdan--Lusztig combinatorics of oriented cup diagrams that will govern the presentation and relations we define for the Hecke category of type $(D_n, A_{n-1})$ we will present later. We should note that this paper is an attempt to extend and adapt the work of \cite{ChrisDyckPaper} where $(W,P)=(A_{n+m}, A_n \times A_m)$ to $(W,P)=(D_n, A_{n-1})$ and therefore where possible we will use the same name for definitions that arise in the same manner.

 \begin{defn} 
\label{non-com:def} 
 Let $\mu \in \mptn$ and let $\cp,\mq \in \underline{\mu}$.
 We say that $\cp$ {\sf covers} $\mq$ and we write $\mq$ $\prec \cp$ if and only if 
    $l_{\cp} < l_{\mq}$ and $r_{\cp} > r_{\mq}$. See \cref{covering1,covering2} for examples.
    \end{defn}

    \begin{figure}[ht!]
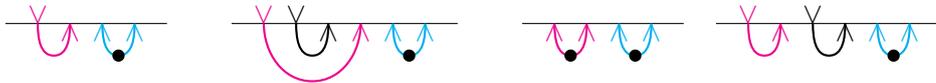

  $$   
$$ 
    
    \caption{Examples of non-commuting pairs $\cp$ and $\mq$ such that 
    $\mq \prec \prec \cp$.}
    \label{doublecover2}
    \end{figure}

\newpage

  \begin{lem}
  Let $\mu\in \mptn$. For any $\cp \in \underline{\mu}$
   $$ b(\cp)=\sharp \{\cp \cap [r,c] \mid [r,c]\in  [\mu] \text { and }\cp \cap [r,c] \text{ is a local cup or a local cup-cap}\}.	$$
 \end{lem}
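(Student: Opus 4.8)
The plan is to read off the count from the explicit description, given in \cref{cups gone}, of the strand $\cp$ as it travels through the tiled diagram $[\mu]$, i.e.\ from the Dyck path $\CP$ generated by $\cp$. A tile $[r,c]$ is counted by the right-hand side exactly when $\cp$ forms an arc joining the two northern edges of $[r,c]$; by \cref{localness} these are precisely the tiles at which $\cp\cap[r,c]$ is a local cup or a local cup-cap, and I will call them the \emph{cup-tiles} of $\cp$. Since by \cref{localness} every nonempty $\cp\cap[r,c]$ is a local cup, a local cap or a local cup-cap, the strand $\cp$ — regarded as a curve joining two points of the northern perimeter of $\mu$ — is a concatenation of local ``$\cap$''-arcs (one at the top of each local-cup tile and each local-cup-cap tile) and local ``$\cup$''-arcs (one at the bottom of each local-cap tile and each local-cup-cap tile), and consecutive arcs alternate between the two types; hence the number of cup-tiles of $\cp$ equals the number of ``$\cap$''-arcs of $\cp$.

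I would first settle the case where $\cp$ is undecorated, so that $b(\cp)=\tfrac12(r_\cp-l_\cp+1)$ and, by \cref{even}, $\cp$ has no local cup-caps at all. Here \cref{cups gone} describes $\CP$ as a single connected zig-zag strip of tiles along which local cups and local caps alternate, with the tiles of $\CP$ realising precisely the contents ${\sf first}(\CP),{\sf first}(\CP)+1,\dots,{\sf last}(\CP)$. Because the content of a tile is determined by the tile's $x$-coordinate, and $\cp$ meets the northern perimeter exactly at $x=l_\cp$ and $x=r_\cp$, the values ${\sf first}(\CP)$ and ${\sf last}(\CP)$ are pinned down by $l_\cp$ and $r_\cp$; a direct count along the strip then shows that exactly $\tfrac12(r_\cp-l_\cp+1)=b(\cp)$ of its tiles carry a ``$\cap$''-arc, which is what we want.

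Finally I would treat the decorated case, where $b(\cp)=\tfrac12(r_\cp+l_\cp)$, the extra term recording that the decoration forces $\cp$ to wind once around the left edge of the frame — which is exactly where the local cup-caps arise (\cref{even}). Here I would use the block decomposition of $\CP$ displayed in \cref{cups gone}: a chain of four-tile blocks, each made up of one local cup, one local cap, one local cup-cap and one empty intersection, finished off by an (possibly empty) undecorated-type strip. Each four-tile block therefore contributes two cup-tiles, and the terminal strip contributes as in the undecorated computation; summing these and inserting the description ${\sf ct}(\CP)=\{1,1,2,2,\dots,{\sf first}(\CP),{\sf first}(\CP)\}\cup\{{\sf first}(\CP)+1,\dots,{\sf last}(\CP)\}$ from \cref{cups gone}, together with the identification of ${\sf first}(\CP)$ and ${\sf last}(\CP)$ in terms of $l_\cp$ and $r_\cp$, collapses the total to $\tfrac12(r_\cp+l_\cp)=b(\cp)$. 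I expect the decorated case to be the main obstacle: one has to turn the winding of $\cp$ into the arithmetic, matching the doubled contributions $1,1,2,2,\dots,{\sf first}(\CP),{\sf first}(\CP)$ against the $+\,l_\cp$ in the breadth formula, and to assign cup-tiles to the four-tile blocks without double-counting the cup-cap tiles. A conceptually cleaner route — to be used if this bookkeeping is unwieldy — is to ``unfold'' the decorated cup across the left edge into an undecorated cup of the same breadth, reducing to the second paragraph; alternatively, one can induct on $b(\cp)$, the case $b(\cp)=1$ (a two-vertex undecorated cup, or $\cp={\sf e}_0$) being immediate and the inductive step peeling off the outermost ``$\cap$''-arc of $\cp$ together with the neighbouring tiles of $\CP$, thereby removing exactly one cup-tile and lowering the breadth by one.
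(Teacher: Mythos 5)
Your proposal is correct and is essentially the paper's own argument: both proofs count one northern arc per tile of the decomposition of $\CP$ recalled in \cref{cups gone} --- the alternating cup/cap strip in the undecorated case, and the four-tile zigzag blocks (each containing exactly one local cup and one local cup-cap) followed by a terminal strip in the decorated case. The only cosmetic difference is that the paper organises the final count in terms of the cups covered, respectively doubly covered, by $\cp$ (one extra local cup per covered cup, one zigzag block per doubly covered cup, whence the breadth formula is immediate), whereas you carry out the equivalent bookkeeping directly in terms of $l_\cp$, $r_\cp$ and the content multiset of $\CP$.
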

 
 \begin{proof}
 There is always one local cup in $\CP$ and an additional local cup in $\CP$ for each cup covered by $\cp$.
 Additionally, if $\cp$ is decorated, there is one zigzag block in $\CP$ for each cup doubly covered by $\cp$, each block adding one local cup and one local cup-cap.
 The lemma then follows.
 This can also be deduced by comparing the images at the beginning of \cref{cups gone} and \cref{gen Dyck}. 
 \end{proof}

Next, we will present a rather cumbersome definition of commuting cups, but for the intuition, one should simply skip straight to \cref{commie} and \cref{commie2}.
       \begin{defn}\label{dist:def}
  Let $\mu\in\mptn$ and $\cp, \mq \in \underline{\mu}$ and suppose without loss of generality that $l_{\mq} < l_\cp$. 
  We say that $\mq$ and $\cp$  commute if 
  either
$(i)$ there exists $\gr $ such that 
$\mq \prec 	\gr		\prec \cp$,
   $(ii)$ there exists $\gr $ such that 
$\mq \prec 	\gr		\prec\prec \cp$ 
or 
$\mq \prec\prec 	\gr		\prec\prec \cp$ 
or $(iii)$ neither  $\mq$  nor     $\cp$ (doubly) covers the other.  
  \end{defn}

 \cref{covering1,doublecover1} above consist of (doubly)  commuting pairs of cups 
 and  \cref{covering2,doublecover2} consist of (doubly) non-commuting pairs of cups.
   
 \begin{lem}
 \label{commie}
Let $\mu\in \mptn $ and let $\cp, \mq\in\underline{\mu}$.  
Then $\cp$ and $\mq$ commute if and only if $\CP\cap \MQ=\emptyset$.
\end{lem}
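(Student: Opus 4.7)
The plan is to prove both implications simultaneously through a case analysis on the commutation condition of \cref{dist:def}, using the explicit tile-by-tile description of Dyck paths given in \cref{cups gone}. First I would record a basic observation: if a tile $[r,c]\in[\mu]$ hosts a local cup of some strand $S$ of $\underline{\mu}$, then by \cref{genuine} we have $S = \langle [r,c]\rangle_\mu$, so at most one strand has a given tile as its local cup, and analogously for local caps. Consequently a tile can lie in $\CP\cap\MQ$ only if the two cups meet it in complementary roles (for instance one as a local cup and the other as a local cap, or one as a cup-cap in the extended tail of a decorated cup and the other as a local cap), which already restricts the possible configurations severely.

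For the forward direction, assume $\cp$ and $\mq$ commute and treat the three cases of \cref{dist:def} in turn. In case (iii), the non-crossing property of $\underline{\mu}$ combined with ``neither (doubly) covers the other'' forces the $x$-ranges $[l_\cp,r_\cp]$ and $[l_\mq,r_\mq]$ to be disjoint and forces the cup whose endpoints lie to the right to be undecorated (else it would doubly cover the other). Hence $\CP$ and $\MQ$ occupy tiles in completely separate strips of the admissible region. In cases (i) and (ii), the intermediate cup $\gr$ acts as a geometric barrier: every tile of $\CP$ is vertically above the trough of $\gr$ while every tile of $\MQ$ lies vertically below it, and in case (ii) the intermediate doubly-covered $\gr$ additionally prevents the extended tails of the two decorated cups from meeting.

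For the reverse direction, suppose $\cp$ and $\mq$ do not commute. Up to relabelling, we are in one of two sub-cases: either one cup directly covers the other with no intermediate cup, or the outer cup is decorated and directly doubly covers the other with no intermediate doubly-covered cup. In the first sub-case, the absence of a separator forces the outer cup's descending zigzag to run immediately next to the inner cup; concretely, the tile hosting the outermost local cap of the inner cup simultaneously hosts a local cap of the outer cup, and this tile lies in $\CP\cap\MQ$. In the second sub-case, the zigzag block within the extended tail of the outer decorated cup that is responsible for clearing the inner cup's region is forced to share a local cap with the inner cup, again exhibiting a tile common to both Dyck paths.

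The hard part will be the geometric bookkeeping in the decorated double-cover sub-case of the reverse direction. The extended tail is assembled from zigzag blocks of four tiles each, and I need to pin down exactly which block sits in which row of $[\mu]$ and verify that it lands on a tile of $\MQ$ whenever no intermediate doubly-covered cup is available as a separator. I expect this to reduce to an explicit but mechanical computation using the content description of $\CP$ from \cref{cups gone} (in particular the multiset $\text{ct}(\CP)$ and the positions of the cup-caps within it) together with the cup-drawing algorithm of \cref{drawcups} which determines where decorated cups of $\underline{\mu}$ can occur; once this is pinned down, the plain cover sub-case follows by the same analysis with the extended-tail bookkeeping suppressed.
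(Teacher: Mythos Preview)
Your approach is essentially the same as the paper's---both arguments rest on reading off the tiled Temperley--Lieb picture---but the paper compresses your case analysis into a single intermediate characterisation: $\CP\cap\MQ=\emptyset$ if and only if there is some strand of $[\mu]$ lying between $\cp$ and $\mq$, and conditions (i)--(iii) of \cref{dist:def} are precisely the statement that such a separating strand exists (in case~(iii) trivially, in cases~(i) and~(ii) the strand $\gr$ itself). This sidesteps the need to track exactly which tile is shared in the non-commuting case.

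One concrete slip to fix: in your reverse direction you write that ``the tile hosting the outermost local cap of the inner cup simultaneously hosts a local cap of the outer cup''. This contradicts your own opening observation that a single tile hosts at most one local cap. What actually happens is that the shared tile hosts a local \emph{cup} of one strand and a local \emph{cap} of the other (the inner cup's zigzag turns downward in the same tile where the outer cup's zigzag turns upward, since with no intermediate strand they are adjacent in the tiling). Your ``vertically above/below the trough of $\gr$'' language in the forward direction is also imprecise for the Russian-style picture; the cleaner statement is simply that the strand $\gr$ physically separates the two Dyck paths in the planar tiling. With these corrections your plan goes through, but the paper's ``separating strand'' formulation gives you both implications at once and avoids the decorated-tail bookkeeping you flagged as the hard part.
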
 
 
 \begin{proof}
 This follows directly from the definition and by drawing tiled Temperley--Lieb diagrams as in \cref{gen Dyck}. The conditions (i)-(iii) are equivalent to $\CP\cap \MQ=\emptyset$, where we note that $\CP\cap \MQ=\emptyset$ if and only if there are strands in between $\cp$ and $\mq$ on $[\mu]$.
 \end{proof}
 
\begin{rmk}
\label{commie2}
Let $\mu\in\mptn$ and $\cp,\mq\in \underline{\mu}$. Then $\cp$ and $\mq$ commute if and only if $\cp \in \underline{\mu-\mq}$ and $\mq \in \underline{\mu-\cp}$. This remark will follow naturally from the following lemmas and \cref{general}.
\end{rmk}
 
   \begin{defn} \label{adj:def2}
   Let $\mu, \la \in \mptn$ be such that $\la=\mu-\cp$ for some $\cp\in \underline{\mu}$. Let $\ot\in \underline{\la}$.
 We say that $\cp$ and $\ot$ are {\sf adjacent} if $\ot$ has precisely one node in common with $\cp$.
\end{defn}

\begin{lem} \label{adj lem0}
Let $\la,\mu\in \mptn$ be such that $\la=\mu-\cp$. Suppose $\cp$ is not (doubly) covered in $\underline{\mu}$. Moreover, if there is no undecorated strand to the left of $\cp$ and no decorated strand to the right of $\cp$, then $\cp$ is adjacent to zero cups in $\underline{\la}$.  The following is a complete list of the pairs for $\cp\in \underline{\mu}$ and $\underline{\la}$ that can be pictured on two vertices:

    $$ a) \Biggl( 
    \begin{minipage}{1.4cm}
  \end{minipage}\Biggr) 
	 $$

 \end{lem}          
 
\begin{lem} \label{adj lem2}
Let $\la,\mu\in \mptn$ be such that $\la=\mu-\cp$. If $\cp$ is (doubly) covered in $\underline{\mu}$, then $\cp$ is adjacent to two cups, $\gr, \ot \in \underline{\la}$. The following is a complete list of the pairs $\cp\in \underline{\mu}$ and $\gr,\ot \in \underline{\la}$ that can be pictured on four vertices:
 \begin{itemize}[leftmargin=*] 
\item   Firstly, we consider the case where $\cp$ is covered by a non-commuting cup, $\vu\in\underline{\mu}$ (we additionally say that $\cp$ is {\sf covered}):
  $$   
a) \Biggl( 
    \begin{minipage}{2.4cm}  
 \end{minipage}\Biggr).$$ 
	 Notice that the two cups $\gr, \ot$ on the right-hand side are non-commuting pairs with $\gr \prec \ot$.
\end{itemize}

 \end{lem}
 
 In \cref{adj lem2} there is a (purple) cup $\vu\in\underline{\mu}$ which, due to the flipping of the vertices of $\cp$, no longer appears in $\underline{\nu}$ and shares one node each with $\gr$ and $\ot$. This is the smallest possible cup (doubly) covering $\cp$. As we will argue below, this cup is uniquely defined even for  more general cases that need $n>4$ vertices to be pictured.
 
 \begin{defn} \label{gen def}
   Let $\mu, \la \in \mptn$ be such that $\la=\mu-\cp$. Let $\cp$ $\in \underline{\mu}$ and $\ot \in \underline{\la}$ be adjacent.
Suppose $\vu\in \underline{\mu}$, $\cp \neq \vu$ is the cup such that $l_\vu=l_\ot$ or $r_\vu=r_\ot$. Then we say that $\vu$ is the \textsf{cup generated by $\cp$ and $\ot$}, denoted $\langle {\color{cyan}p}\cup\ot\rangle_\mu$.
   \end{defn}

We should note that we are intentionally not defining $\langle {\color{cyan}p}\cup {\color{darkgreen}r}\rangle_\mu$ in the case where then $\cp$ is adjacent to a non-commuting pair $\gr, \ot$ of cups and $\gr \prec \ot$ - in this case $l_\vu$=$r_\gr$ or $r_\vu$=$l_\gr$. 
      
\begin{rmk}
\label{general}
Clearly, a cup $\cp\in \underline{\mu}$ can be adjacent to zero, one or two cups in $\underline{\la}$. We will list a complete set of cases below, which follows immediately from \cref{adj:def2}, but can also be deduced from \cite[Lemma 3.9]{TypeDKhov}.
In what follows, we depict each case using the minimal number of vertices. In general, $\underline{\mu}$ may contain additional cups $\ps$ that are unaffected by the removal of $\cp$ (see \cref{general}). In \cref{Contraction Sect}, we discuss a method for reducing these diagrams, explaining why it suffices to consider the minimal cases presented here.
\label{general}
The cases covered in \cref{adj lem2,adj lem1,adj lem0} are the pictured on the minimal number of vertices of $\underline{\mu}$. One can create more cases on a larger number of vertices by adding (possibly many) cups that are (doubly) covered by $\vu$ and/or $\cp$. However each of these cases reduces simply to one of the cases above. For example,
$$ 
 \Biggl( 
    \begin{minipage}{4.3cm}  
 \end{minipage}\Biggr)$$is equivalent to \cref{adj lem2} c). One can similarly generalise the remaining cases in \cref{adj lem0,adj lem1,adj lem2}.

In \cref{Contraction Sect} we will explain a method that will allow us to move between weight, cups and Hecke categories of different sizes, removing these smaller (doubly) covered cups, which will allow us to restrict our focus as much as possible to the cup diagrams pictured in \cref{adj lem0,adj lem1,adj lem2}.
\end{rmk}

The final lemma in this section is a simple rephrasing of \cref{flipper}.
\begin{lem}
Let $\la,\mu, \nu \in \mptn$ and suppose $\underline{\mu}\la$ is oriented with $\la= \mu - \sum_{i=1}^{m}\cp^i$, $\cp^i \in \underline{\mu}$. Suppose for some $\ot \in \underline{\la}$ that $\nu= \la - \ot$, then  $\underline{\mu}\nu$ is oriented if and only if $\ot$ is not adjacent to any $\cp^i$.
\end{lem}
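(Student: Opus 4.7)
The plan is to prove both directions using \cref{flipper}, which characterises oriented $\underline{\mu}\nu$ as those $\nu$ obtained from $\mu$ by flipping some collection of cups of $\underline{\mu}$. Writing $E=\bigcup_i\{l_{\cp^i},r_{\cp^i}\}$ and $E'=\{a,b\}$ for the endpoints of $\ot$, the set of vertices on which $\mu$ and $\nu$ disagree is precisely the symmetric difference $E\triangle E'$.

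For the forward direction I argue by contrapositive. Suppose $\ot$ is adjacent to some $\cp^i$, sharing the single vertex $v$, and let $v'$ be the other endpoint of $\cp^i$. Then $v\in E\cap E'$ while $v'\in E\setminus E'$, so $\nu(v)=\mu(v)$ whereas $\nu(v')$ is the flip of $\mu(v')$. Since $\underline{\mu}\mu$ is oriented and an undecorated cup requires its two endpoint labels to differ while a decorated cup requires them to match, flipping exactly one endpoint of $\cp^i$ breaks the orientation type of $\cp^i$ in $\underline{\mu}\nu$. Hence $\underline{\mu}\nu$ is not oriented.

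For the reverse direction, assume $\ot$ is not adjacent to any $\cp^i$, so $|\{a,b\}\cap\{l_{\cp^i},r_{\cp^i}\}|\in\{0,2\}$ for every $i$. If this intersection equals $2$ for some $i$, then $\{a,b\}=\{l_{\cp^i},r_{\cp^i}\}$, and $\nu=\mu-\sum_{j\neq i}\cp^j$ realises $\nu$ as $\mu$ with cups of $\underline{\mu}$ flipped, so \cref{flipper} applies. Otherwise $\{a,b\}\cap E=\emptyset$, and it suffices to show $\ot\in\underline{\mu}$; granting this, $\nu=\mu-\sum_i\cp^i-\ot$ again realises $\nu$ as $\mu$ with cups of $\underline{\mu}$ flipped, and \cref{flipper} concludes.

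To establish $\ot\in\underline{\mu}$ in this final sub-case, I use the matching-parentheses description of \cref{drawcups}. Since $a,b\notin E$ we have $\la(a)=\mu(a)$ and $\la(b)=\mu(b)$; the pairing of $(a,b)$ in $\underline{\la}$ amounts to the substring $\la(a+1)\cdots\la(b-1)$ forming a balanced matching in the sense of \cref{drawcups}, and I must transfer this property to $\mu$. The main obstacle, which I expect to be the hardest step, is the subsidiary fact that no $\cp^i$ has exactly one endpoint in the open interval $(a,b)$; any such ``split'' would, by the non-crossing structure of $\underline{\mu}$ together with the case-by-case adjacency analysis of \cref{adj lem0,adj lem1,adj lem2} and \cref{general}, force $\ot$ to share a vertex with $\cp^i$, contradicting non-adjacency. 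Granted this, each $\cp^i$ contributes either both or neither of its flipped endpoints to $(a,b)$; a paired swap $\down\up\leftrightarrow\up\down$ (or the decorated analog $\up\up\leftrightarrow\down\down$) preserves both the total count and the prefix non-negativity of the substring, so $\mu(a+1)\cdots\mu(b-1)$ is also balanced and $(a,b)$ pairs up in $\mu$'s cup algorithm.
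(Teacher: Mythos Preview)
The paper gives no proof of this lemma, merely asserting it is ``a simple rephrasing of \cref{flipper}''. Your forward direction is correct and your reduction of the reverse direction to the claim $\ot\in\underline{\mu}$ (in Case~2) is also correct. However, the execution of that final claim has a genuine error.

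Your assertion that the decorated swap $\down\down\leftrightarrow\up\up$ ``preserves both the total count and the prefix non-negativity'' is false. Going from $\la$ to $\mu$, a decorated $\cp^i$ with endpoints $x<y$ in $(a,b)$ has $\la(x)=\la(y)=\down$ and $\mu(x)=\mu(y)=\up$; this \emph{decreases} the running excess $\#\down-\#\up$ by $2$ on $[x,y-1]$ and by $4$ on $[y,b-1]$, and changes the total $\down/\up$ count by $4$. So balance is destroyed, not preserved. What actually saves you is that no decorated $\cp^i$ can lie inside $(a,b)$ at all --- but this is a separate claim you do not prove, and it does not follow from your ``no split'' statement. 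Your hand-wave for ``no split'' via \cref{adj lem0,adj lem1,adj lem2} is also not a proof: those lemmas describe a \emph{single} flip, and non-crossing in $\underline{\mu}$ says nothing directly about the cup $\ot\in\underline{\la}$. Finally, you do not treat the case where $\ot$ itself is decorated, for which the ``balanced substring'' characterisation of \cref{drawcups} does not apply verbatim.

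All three gaps can be closed by a single height-function argument. Write $h_w(k)=\#\{i\le k:w(i)=\down\}-\#\{i\le k:w(i)=\up\}$. For $\ot$ undecorated, the key observation is: whenever a decorated $\cp^i$ has its left endpoint $x\in(a,b)$, or an undecorated $\cp^i$ straddles $b$, then $b$'s partner $c$ in $\underline{\mu}$ lies in $(a,b)$ with $c\notin E$; since everything under the undecorated cup $c$--$b$ in $\underline{\mu}$ is undecorated, the net $\epsilon$-contribution on $[c,b]$ vanishes, forcing $h_\la(c-1)=h_\la(b)=h_\la(a-1)$, which contradicts $h_\la(k)>h_\la(a-1)$ on $[a,b)$. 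A symmetric argument with $a$'s partner handles the remaining split cases and decorated $\cp^i$ with right endpoint in $(a,b)$. Once only undecorated $\cp^i$ entirely inside $(a,b)$ remain, your balance-preservation argument is valid. The decorated-$\ot$ case needs its own (analogous) treatment using the ``new minimum'' characterisation of remaining $\up$'s.
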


\section{The diagrammatic Hecke category}
\label{Hecke}
 
In this section we recall the construction of the main object studied in this paper: the Hecke categories associated to $(W,P)=(D_n, A_{n-1})$.

\subsection{The Hecke categories }
\label{coulda}
Let  $S = \{s_i  \mid 0\leq i \leq n-1\}$ denote the set of simple reflections generating $W$ with the relations encoded in the Coxeter graph shown in \cref{coxeterlabelD2}. 
  We define the {\sf  Soergel generators} to be the framed graphs 
$$
{\sf 1}_{\emptyset } =
\begin{minipage}{1.5cm} 

\end{minipage}.$$ 
We define the northern and southern reading words of a Soergel generator (or its dual) to be the word in the alphabet $S$ obtained by reading the colours along the northern and southern edges of the frame (reading from left-to-right), respectively.
Given two (dual) Soergel generators $D$ and $D'$, we define $D\otimes D'$ to be the diagram obtained by horizontal concatenation (and we extend this linearly).  
The northern and southern colour sequences of $D\otimes D'$ is then given by concatenating that of $D$ and $D'$ ordered from left to right.  
For any two (dual) Soergel generators, we define their product $D\circ D'$ (or simply $DD'$) to be the vertical concatenation of $D$ above $D'$ if the southern reading word of $D$ is equal to 
 the northern reading word of $D'$; otherwise, the product is defined to be zero. We define a Soergel graph to be any diagram obtained from the Soergel generators and their duals by repeated horizontal and vertical concatenation, considered up to isotopy.

 For each $\csigma\in S$, we define  the associated ``barbell", ``dork" and ``gap'' diagrams to be the elements  
 $${\sf bar}(\csigma)=  
  {\sf spot}_ \csigma^\emptyset
   {\sf spot}^ \csigma_\emptyset
   \qquad
   {\sf dork}^{\csigma\csigma}_{\csigma\csigma}= 
      {\sf fork}^{\csigma\csigma}_{ \csigma}
            {\sf fork}_{\csigma\csigma}^{ \csigma} \qquad {\sf gap}(\csigma) = {\sf spot}^\csigma_\emptyset {\sf spot}^\emptyset_\csigma.$$  Given $\sts ,\stt\in\Std(\la)$ for $\la \in \mptn$, we write 
${\sf braid}^{\sts}_{\stt }$ for the permutation mapping one tableau to the other, with strands coloured according to the contents. For example 
$${\sf braid}^{\sts}_{\stt }=\begin{minipage}{3cm}\begin{tikzpicture}[scale=1.2]
\draw[densely dotted, rounded corners] (-0.25,0) rectangle (2.25,1);
\draw[gray,line width=0.08cm](0,0)--++(90:1);
\draw[blue,line width=0.08cm](1,0) to   (1.5,1);

\draw[red,line width=0.08cm](0.5,0)--(0.5,1);
\draw[cyan,line width=0.08cm](1.5,0)-- (2,1);
\draw[darkgreen ,line width=0.08cm](2,0) to [out=90,in=-90] (1,1);
\end{tikzpicture}\end{minipage} 
\qquad
{\sf braid}^{\stt}_{\stu }=\begin{minipage}{3cm}\begin{tikzpicture}[scale=1.2]
\draw[densely dotted, rounded corners] (-0.25,0) rectangle (2.25,1);
\draw[gray,line width=0.08cm](0,0)--++(90:1);
\draw[red,line width=0.08cm](0.5,0) to  (0.5,1);

\draw[blue,line width=0.08cm](1,0)--(1,1);
\draw[darkgreen,line width=0.08cm](1.5,0)-- (2,1);
\draw[cyan,line width=0.08cm](2,0)--(1.5,1);
\end{tikzpicture}\end{minipage} 
\qquad 
{\sf braid}^{\sts}_{\stu}=\begin{minipage}{3cm}\begin{tikzpicture}[scale=1.2]
\draw[densely dotted, rounded corners] (-0.25,0) rectangle (2.25,1);
\draw[gray,line width=0.08cm](0,0)--++(90:1);
\draw[red,line width=0.08cm](0.5,0) -- (0.5,1);

\draw[blue,line width=0.08cm](1,0)--(1.5,1);
\draw[darkgreen,line width=0.08cm](1.5,0)-- (1,1);
\draw[cyan,line width=0.08cm](2,0)-- (2,1);
\end{tikzpicture}\end{minipage} 
$$
for the three  $\sts,\stt,\stu \in \Std(1,2,1^2)$.
In practice, we will often drop the tableaux and record only the underlying content sequence.

  \begin{rmk} 
The cyclotomic quotients of (anti-spherical) Hecke categories are small categories  
with finite-dimensional morphism spaces given by the light leaves basis of \cite{MR3555156,antiLW}.     
Working with such a category is equivalent to working with a locally unital algebra, as defined in \cite[Section 2.2]{BSBS}, see  \cite[Remark 2.3]{BSBS}.  
   Throughout this paper, we adopt the latter perspective.  
   Readers who prefer a categorical viewpoint may equivalently reformulate all results in terms of categories and representations of categories.  
   \end{rmk}

 The following simplification of the presentation of the Hecke category is obtained using \cite[Theorem 2.1]{ChrisHSP}

 \begin{defn}\label{easydoesit}   \renewcommand{\vvv}{{\underline{w} }} 
\renewcommand{\w}{{\underline{x}}}
\renewcommand{\x}{{\underline{y}}}
\renewcommand{\y}{{\underline{z}}}
\newcommand{\zz}{{\underline{w}}}
 
 We define  $\mathscr{H}_{(D_n, A_{n-1})}   $ to be the locally-unital associative $\Bbbk$-algebra spanned by all  Soergel graphs with multiplication given by $\circ$-concatenation, subject to the following local relations and their vertical and horizontal flips.  First,  for any   $ \csigma, \ctau \in  S$  we have the idempotent relations 
\begin{align*}
{\sf 1}_{\csigma} {\sf 1}_{\ctau}& =\delta_{\csigma,\ctau}{\sf 1}_{\csigma} & {\sf 1}_{\emptyset} {\sf 1}_{\csigma} & =0 & {\sf 1}_{\emptyset}^2& ={\sf 1}_{\emptyset}\\
{\sf 1}_{\emptyset} {\sf spot}_{\csigma}^\emptyset {\sf 1}_{\csigma}& ={\sf spot}_{\csigma}^{\emptyset} & {\sf 1}_{\csigma} {\sf fork}_{\csigma\csigma}^{\csigma} {\sf 1}_{\csigma\csigma}& ={\sf fork}_{\csigma\csigma}^{\csigma} &
{\sf 1}_{\ctau\csigma}  {\sf  braid}_{\csigma\ctau}^{\ctau\csigma}
 {\sf 1}_{\csigma\ctau}  & ={\sf  braid}_{\csigma\ctau}^{\ctau\csigma} 
\end{align*}
where the final relation holds for all ordered pairs  $( \csigma,\ctau)\in S^2$  with $\csigma\ctau=\ctau\csigma$.  
For each  $\csigma \in  S$  we have the fork-spot contraction, the double-fork, and circle-annihilation  
 relations:  
\begin{align*} 
({\sf spot}_\csigma^\emptyset \otimes {\sf 1}_\csigma){\sf fork}^{\csigma\csigma}_{\csigma}
=
{\sf 1}_{\csigma}
 \qquad 
  ({\sf 1}_\csigma\otimes {\sf fork}_{\csigma\csigma}^{ \csigma} )
({\sf fork}^{\csigma\csigma}_{\csigma}\otimes {\sf 1}_{\csigma})
=
{\sf fork}^{\csigma\csigma}_{\csigma}
{\sf fork}^{\csigma}_{ \csigma\csigma}
\qquad  {\sf fork}_{\csigma\csigma}^{\csigma}
 {\sf fork}^{\csigma\csigma}_{\csigma}=0 
\end{align*}
   For   $(\csigma, \ctau ,\crho)\in S^3$ 
with   $\csigma\crho=\crho\csigma,
 \crho\ctau=\ctau\crho , \csigma\ctau=\ctau\csigma$,  we have the commutation relations 
 \begin{align*} 
 {\sf spot}^{ \csigma }_{\emptyset} \otimes {\sf 1}_{\crho} = 
{\sf braid}^{\csigma\crho}_{\crho\csigma}
( {\sf 1}_{\crho} \otimes {\sf spot}^{ \csigma }_{\emptyset} )
 \qquad
(  {\sf fork}^{ \csigma  \csigma}_{ \csigma} \otimes {\sf 1}_{\crho}  ) 
{\sf braid}^{\csigma\crho}_{\crho\csigma}
=
{\sf braid}^{\csigma\csigma\crho}_{\crho\csigma\csigma}
 (     {\sf 1}_{\crho} \otimes {\sf fork}^{ \csigma  \csigma}_{ \csigma} ) 
   \end{align*}
 \begin{align*} 
 {\sf braid}^{\csigma\ctau\crho}_{ \csigma\crho\ctau }
 {\sf braid}^ { \csigma\crho\ctau } _{  \crho\csigma\ctau } 
 {\sf braid}^ {  \crho\csigma\ctau }  _{  \crho \ctau\csigma }  
=
 {\sf braid}^{\csigma\ctau\crho}_{\ctau \csigma\crho  }
 {\sf braid}^ {\ctau \csigma\crho  } _{  \ctau  \crho  \csigma} 
 {\sf braid}^ {   \ctau  \crho  \csigma }  _{  \crho \ctau\csigma }.
     \end{align*}
    For $\csigma,\ctau \in S$ with $\csigma \ctau \csigma= \ctau \csigma \ctau$, we have the one and two colour Demazure relations,
\begin{align*}
     {\sf bar}(\csigma)\otimes {\sf 1}_\csigma
     +
 {\sf 1}_\csigma\otimes           {\sf bar}(\csigma) 
& =2 {\sf gap}(\csigma)
\\      {\sf bar}(\ctau)\otimes {\sf 1}_\csigma
     -
 {\sf 1}_\csigma\otimes           {\sf bar}(\ctau) 
& =		 {\sf 1}_\csigma\otimes           {\sf bar}(\csigma) 	- {\sf gap}(\csigma)
  \end{align*}
     and the null-braid relation
\begin{align*} 
 {\sf 1}_{\csigma\ctau\csigma} + 
 ({\sf 1}_\csigma \otimes {\sf spot}^\ctau _\emptyset \otimes  {\sf 1}_\csigma )
 {\sf dork}^{\csigma\csigma}_{\csigma\csigma}
 ( {\sf 1}_\csigma\otimes 
  {\sf spot}_\ctau ^\emptyset \otimes  {\sf 1}_\csigma )
 =0.
\end{align*}
   Further,  we need the interchange law and the monoidal unit relation 
$$
 \big( {\sf D  }_1 \otimes   {\sf D}_2   \big)\circ  
\big(  {\sf D}_3  \otimes {\sf D }_4 \big)
=
 ({\sf D}_1 \circ   {\sf D_3}) \otimes ({\sf D}_2 \circ  {\sf D}_4)
\qquad {\sf 1}_{\emptyset} \otimes {\sf D}_1={\sf D}_1={\sf D}_1 \otimes {\sf 1}_{\emptyset}
$$
for all diagrams ${\sf D}_1,{\sf D}_2,{\sf D}_3,{\sf D}_4$.  
 Finally, we need the   non-local cyclotomic   relations  
 \begin{align*} 
{\sf 1}_\csigma \otimes D=0
  \qquad 
  {\sf bar}(\ctau) \otimes D=0
\end{align*}
 for all $ \csigma \in S $, with $\csigma\neq s_0$, $\ctau \in S$  and all diagrams,  $D$.

 \end{defn}

 \begin{defn}
   We   define the idempotent truncation  $${\sf 1}_{n}= \sum_{\la\in \mptn } {\sf 1}_{\stt_\la}\qquad 
    \mathcal{H}_{(D_n, A_{n-1})}= {\sf 1}_{n} \mathscr{H}_{(D_n, A_{n-1})} {\sf 1}_{n}$$ 
 
 \end{defn}
 
\begin{rmk} In \cite[Theorem 7.2]{ChrisHSP} the algebra $   \mathcal{H}_{(D_n, A_{n-1})}$ is shown to be a basic algebra. \end{rmk}

   \begin{rmk} 
The algebras $\mathscr{H}_{(D_n, A_{n-1})}$ and $\mathcal{H}_{(D_n, A_{n-1})}$ can be equipped with 
 a $\mathbb Z$-grading which is compatible with the duality~$\ast$.  The degrees  of  the generators with respect to this grading are defined  as follows:
 $$
 {\sf deg}({\sf 1}_\emptyset)=0
 \quad
  {\sf deg}({\sf 1}_\al)=0
  \quad
  {\sf deg} ({\sf spot}^\emptyset_\al)=1
    \quad
  {\sf deg} ({\sf fork}^\al_{\al\al})=-1
    \quad
  {\sf deg} ({\sf braid}^{\al\bet}_{\bet\al} )=0
 $$
for $\al,\bet \in S $   such that $\csigma\ctau=\ctau\csigma$.
 \end{rmk}

   \section{ Generators }
\label{gens}

In this section, as in \cite[Section 5]{ChrisDyckPaper}, we use the meta Kazhdan--Lusztig combinatorics of \cref{Cup combinatorics} to provide a new set of generators for the basic algebra of the Hecke category, $\mathcal{H}_{(D_n, A_{n-1})}$.  These generators all lie in degree $0$ or $1$ and, later on, they will provide us with a quadratic presentation for $\mathcal{H}_{(D_n, A_{n-1})}$. 
 
 \subsection{Light leaves combinatorics via tableaux} 
 
We will rephrase the usual combinatorics of the light leaves basis for the Hecke category in terms of oriented Temperley--Lieb diagrams $\emptyset e_\mu \la$. In \cite{bowman2023orientedtemperleyliebalgebrascombinatorial} they construct a graded algebra structure on the space of all oriented Temperley--Lieb diagrams. In particular, in type $(D_n, A_{n-1})$ this is constructed by overlaying oriented Temperley--Lieb diagrams on a tile-partitions as in \cref{tiling}. This overlaying assigns each tile of the tableau not only a number between $1$ and $\ell(\mu)$, defined by the tableau $\stt_\mu$, but also with one of four possible orientations obtained from the weight, $\la$, determined by the strands passing through the northern and southern edges of the tile. Hence we will view products of these oriented Temperley--Lieb diagrams as products of \lq oriented tableau' $\stt_\mu^\la$\color{black}.

\begin{defn}\label{ortab}
Let $\la, \mu \in \mptn $ such that  $\underline{\mu}\la$ is oriented. Draw the tiled Temperley--Lieb diagram $[\mu]$ as in \cref{flipit}. Glueing $\emptyset$ and $\la$ on the bottom and top of the diagram, respectively defines one of four possible orientations on each tile of $[\mu]$. 
    Throughout, we assume decorations to be `orientation-reversing' (in the sense of \cref{degrees}).
We define the orientation label of a tile 
 as follows:
  $$ 
 $$We then define the oriented tableau $\stt_\mu^\la$ to be the map which assigns to each tile $[r,c]\in \mu$ a pair $(k, x)$ where $k=\stt_\mu ([r,c])$ and $x\in \{ 1, s , f ,sf\}$ is the orientation label of the tile $[r,c]$.
For an example of a labelled tile-partition, see \cref{twopics}.

Throughout, we will refer to $\mu$ as the {\sf shape} of the oriented tableau, $\stt_\mu^\la$, and to $\la$ as the {\sf weight} of $\stt_\mu^\la$.
\end{defn}

 \subsection{Soergel diagrams from  oriented Temperley--Lieb diagrams }
We now revisit the classical definition of the light leaves basis starting from oriented Temperley--Lieb diagrams.  This material is covered in detail in \cite{ChrisHSP}.  
Parts of this section are very similar to \cite[Section 5]{ChrisDyckPaper}, with the main difference being the change in the shape of oriented tableaux, so while the material will be familiar to the reader with prior knowledge of the combinatorics of \cite{ChrisDyckPaper}, in order to be self-contained we will restate the results from that section within the framing of $\mathcal{H}_{(D_n,A{n-1})}$.

 \begin{defn}
 We define up and down operators on diagrams as follows.
 Let $D$ be any Soergel graph with northern colour sequence $\sts\in {\rm Std}(\la)$ for some $\la\in  \mptn $, with $\ell(\la)=m$.
  \begin{itemize}[leftmargin=*]
\item    
Suppose that
${\color{magenta}[r,c]} \in {\rm Add}(\la)$ with ${\rm ct}([r,c])=\csigma \in W$.  We define  $$
\qquad {\sf U}^1_\csigma(D)=\begin{minipage}{1.85cm}\begin{tikzpicture}[scale=1.5]
\draw[densely dotted, rounded corners] (-0.5,0) rectangle (0.75,0.75) node [midway] {$ D$} ;
 \end{tikzpicture}\end{minipage}
 \begin{minipage}{0.75cm}\begin{tikzpicture}[scale=1.5]
\draw[densely dotted, rounded corners] (-0.25,0) rectangle (0.25,0.75);
\draw[magenta,line width=0.08cm](0,0)--++(90:0.75);
 \end{tikzpicture}\end{minipage}
 \qquad 
 \qquad 
 {\sf U}^0_\csigma(D)=
\begin{minipage}{1.99cm}\begin{tikzpicture}[scale=1.5]
\draw[densely dotted, rounded corners] (-0.5,0) rectangle (0.75,0.75) node [midway] {$D$} ;
 \end{tikzpicture}\end{minipage}
\begin{minipage}{0.75cm}\begin{tikzpicture}[scale=1.5]
\draw[densely dotted, rounded corners] (-0.25,0) rectangle (0.25,0.75);
\draw[magenta,line width=0.08cm](0,0)--++(90:0.3525) coordinate (hi);
\draw[fill=magenta,magenta] (hi) circle (3pt);
 \end{tikzpicture}\end{minipage}.
$$
 
\item   Now suppose that  
${\color{magenta}[r,c]} \in {\rm Rem}(\la)$ with ${\sf ct}([r,c])=\csigma \in S$.
We let  $\stt \in  \Std (\la-{\color{magenta}[r,c]})$ be defined as follows: 
if $\sts({\color{magenta}[r,c]})=k$, then we let  
 $\stt^{-1}(j)=\sts^{-1}(j)$ for $ 1\leq j <k$ and 
  $\stt^{-1}(j-1)=\sts^{-1}(j)$ for $ k<j \leq m $. 
We  let $\stt \otimes \csigma  \in \Std(\la)$ be defined by $
(\stt \otimes \csigma)({\color{magenta}[r,c]})=m$ and $(\stt\otimes \csigma)([x,y])=\stt([x,y])$ otherwise. 
We define 
$$
 {\sf D}_\csigma^0(D)= 
\begin{minipage}{1.85cm}

$$
\caption{ On the left, we depict the labelling of the oriented  Temperley--Lieb diagram of weight $\la=(1,2,1,1,1)$ and shape $\mu=(1,2,3,4,1)$.
On the right we depict the unique $D^\la_\mu$ for  $t_{(1,2,3,4,1)}\in \Std((1,2,3,4,1))$. }
\label{twopics}
\end{figure}

 \begin{thm}\label{cellular basis}
 The  algebra   
$\mathcal{H}_{(D_n,A{n-1})}  $  is a graded cellular
algebra with a graded cellular basis given by
\begin{equation}\label{basis}
\{D_\la^\mu D^\la_\nu \mid  \la,\mu,\nu\in \mptn  \,\, \mbox{with}\,\,		\underline{\mu}\la,\underline\nu\la \text{ oriented} \}
\end{equation}
with 
$${\rm deg}( D_\la^\mu D^\la_\nu) = {\rm deg}(\underline{\mu}\la) + {\rm deg}(\underline{\nu}\la),$$
with respect to the involution $*$ and the partial order on $\mptn $ given by inclusion.
\end{thm}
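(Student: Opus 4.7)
My plan is to deduce this theorem from the now-standard light leaves construction for (anti-spherical) Hecke categories, using the oriented Temperley--Lieb reformulation developed in Sections 2--3. The main point is that the elements $D_\mu^\la$ defined via the up-down operators are precisely a repackaging of the Libedinsky--Williamson anti-spherical light leaves basis \cite{antiLW} (in the framework of \cite{MR3555156}) adapted to the parabolic system $(D_n, A_{n-1})$; the translation is essentially identical to that carried out in \cite{ChrisHSP,ChrisDyckPaper}, but with the tile-partition combinatorics and decorated cup diagrams of Sections 2--3 replacing the classical type $A$ oriented Temperley--Lieb data. So the real task is not to prove cellularity from scratch but to verify that the standard light leaves basis, once translated into this language, has the asserted indexing set, involution, partial order and grading.

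First, I would recall that by the Soergel conjecture and the results of Elias--Williamson, the diagrammatic Hecke category admits a double light leaves basis indexed by pairs of ``subordinate'' sequences to reduced expressions; passing to the parabolic/anti-spherical quotient and then to the idempotent truncation ${\sf 1}_n \mathscr{H}_{(D_n,A_{n-1})}{\sf 1}_n$ (as in \cite[Theorem 7.2]{ChrisHSP}) cuts this down to a basis indexed by triples $(\la,\mu,\nu)$ where $\la$ runs over $\mptn$ and $\mu,\nu$ run over ``canonical sprinkles'' on top of $\stt_\la$. The key combinatorial input, encoded in \cref{ortab}, is that these sprinkles (choices of $1,s,f,sf$ at each tile) are in bijection with weights $\mu,\nu$ such that $\underline{\mu}\la$ and $\underline{\nu}\la$ are oriented oriented cup diagrams. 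This is exactly the content of \cite[Theorem 7.3]{bowman2023orientedtemperleyliebalgebrascombinatorial}, which I would cite in bijective form.

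Next, I would identify $D_\la^\mu$, constructed by the recursive ${\sf U}^{0/1}$, ${\sf D}^{0/1}$ prescription, with the ascending light leaf for the oriented diagram $\underline{\mu}\la$, and $D^\la_\nu = (D^\nu_\la)^*$ with the descending (dual) light leaf; the braid prefix ${\sf braid}^{\stt_\la}_\sts$ ensures the resulting diagram has the required northern/southern colour sequences $\stt_\la/\stt_\mu$, which gives cell-idempotent compatibility. Cellularity then reduces to three standard checks: (i) the family \eqref{basis} $\Bbbk$-spans the algebra -- this is the spanning half of the light leaves theorem; (ii) linear independence -- which follows from the known rank computation matching the dimensions of the anti-spherical Hecke module, or equivalently from the fact that the above bijection with oriented cup-diagram pairs produces the correct dimension count; and (iii) the triangular multiplication axiom
\[
  D^\mu_\la D^\la_\nu \cdot x \;\equiv\; \sum_{\mu'} c_{\mu'}(x)\, D^{\mu'}_\la D^\la_\nu \pmod{\mathcal{H}^{>\la}},
\]
where $\mathcal{H}^{>\la}$ is spanned by basis elements with cell label strictly greater than $\la$ in the inclusion order on $\mptn$. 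This last axiom is immediate from the inductive construction of the $D^\la_\mu$: applying an elementary generator to $D^\mu_\la D^\la_\nu$ and resolving via the relations of \cref{easydoesit} can either preserve the shape $\la$ (giving the $c_{\mu'}$ terms) or strictly enlarge the cup pattern, contributing only to cells above $\la$. Compatibility with $*$ is manifest since $D^\mu_\la = (D^\la_\mu)^*$ by definition, and the grading formula $\deg(D^\mu_\la D^\la_\nu) = \deg(\underline\mu\la)+\deg(\underline\nu\la)$ follows tile-by-tile from the degrees of the Soergel generators listed after \cref{easydoesit}: each ${\sf U}^0$ or ${\sf D}^1$ step contributes a single spot of degree $+1$, and these are precisely the tiles whose orientation label records a clockwise cup in the sense of \cref{degrees}.

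The main obstacle, and the step where I would spend most of the proof, is part (iii): verifying the triangularity axiom cleanly within the oriented-tableau language. The combinatorial subtlety specific to type $D$ is the presence of decorated cups and the ``local cup-cap'' phenomenon (\cref{localness}, \cref{even}), which means that resolving a product can generate extra decorations and requires careful bookkeeping through the zigzag structure of \cref{cups gone}. Once this is handled -- and the relevant Soergel calculus reductions match the adjacency casework of \cref{adj lem0,adj lem1,adj lem2} -- the cellular axioms fall out, and the theorem follows. The base case and the commutation/braid reductions are routine and essentially parallel to \cite[Section 5]{ChrisDyckPaper}, so I would present the proof as a comparison with that reference, adapting only those arguments that depend on decorations.
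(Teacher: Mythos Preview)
Your approach is correct and essentially the same as the paper's: both deduce the theorem from the general light leaves construction of \cite{MR3555156,antiLW}, with the $D^\la_\mu$ identified as a combinatorial repackaging of anti-spherical light leaves. The paper's entire proof is one sentence citing those references.

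Where you diverge is in singling out the triangularity axiom (iii) as ``the main obstacle'' requiring careful type-$D$ bookkeeping through decorations and zigzags. This is unnecessary: once the elements $D^\la_\mu$ are identified with light leaves (which is a definitional unpacking, not a theorem), the cellular structure---spanning, independence, triangularity, compatibility with $\ast$ and the grading---is inherited wholesale from the general theory. The only genuinely new combinatorial content is the bijection between light-leaf decorations and oriented cup diagrams, and that is already supplied by \cite{bowman2023orientedtemperleyliebalgebrascombinatorial} and \cite{ChrisHSP}. Your plan to re-verify (iii) in the oriented-tableau language would work, but it reproves something already established abstractly and would make the proof an order of magnitude longer than it needs to be.
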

\begin{proof}
This is a combinatorial rephrasing of the light leaves basis, constructed in full generality for arbitrary Coxeter systems in  \cite{MR3555156,antiLW}.
\end{proof}

Note, in particular that the degree $0$ basis elements are given by $D_\la^\la = {\sf 1}_\la$, $\la\in \mptn$, and the degree $1$ basis elements are given by $D_\mu^\la$ and $D_\la^\mu$ for $\la, \mu \in \mptn $ such that $\la = \mu - \cp$, for some $\cp\in \underline{\mu}$.  In what follows we will show that these degree 0 and degree 1 elements generate all of $\mathcal{H}_{(D_n, A_{n-1})}$.

\subsection{Multiplying generators on the oriented tableaux}
\label{Go forth and multiply}
We have seen how to determine a Soergel graph $D_\mu^\la$ from the oriented tableau $\stt_\mu^\la$. We now wish to be able to compute the multiplication of two such Soergel graphs directly from the oriented tableaux. To do this it becomes easier to consider pairs of tableaux of the same shape. This can be achieved by adding a fifth possible orientation for tiles, $0$. When constructing the corresponding Soergel graph, whenever we encounter a tile with $0$-orientation, we simply tensor with the empty Soergel graph; that is we leave the graph unchanged (see \cref{zeroorientation,zeroorientation2} for two examples). 

For example, let $\mu, \nu \in \mptn$ be such that $\nu=\mu -\cp$ for some $\cp \in \underline{\mu}$. Additionally suppose, for the moment, that $\mq \in \underline{\mu}$ commutes with $\cp$ and hence $\mq \in \underline{\nu}$ too. We would like to be able to visualise the product $$D^{\mu - \cp -\mq}_{\mu - \cp} D^{\mu - \cp}_\mu$$solely using oriented tableaux. Instead of considering the oriented tableau $\stt_{\mu - \cp}^{\mu - \cp - \mq}$ as a labelling of the tiles of $\mu - \cp$, we can visualise it as a labelling of the tiles of $\mu$ with all tiles belonging to $\CP$ having $0$-orientation. 
The orientation of all other tiles remains unchanged. See the left-hand tableaux in \cref{abba} for an example of this.
We then are able to multiply the elements $D^{\mu - \cp -\mq}_{\mu - \cp}$ and $D^{\mu - \cp}_\mu$ just by  \lq stacking' the two oriented tableaux, without any need to apply a braid generator in between the two diagrams.
 
 \begin{figure}[ht!]
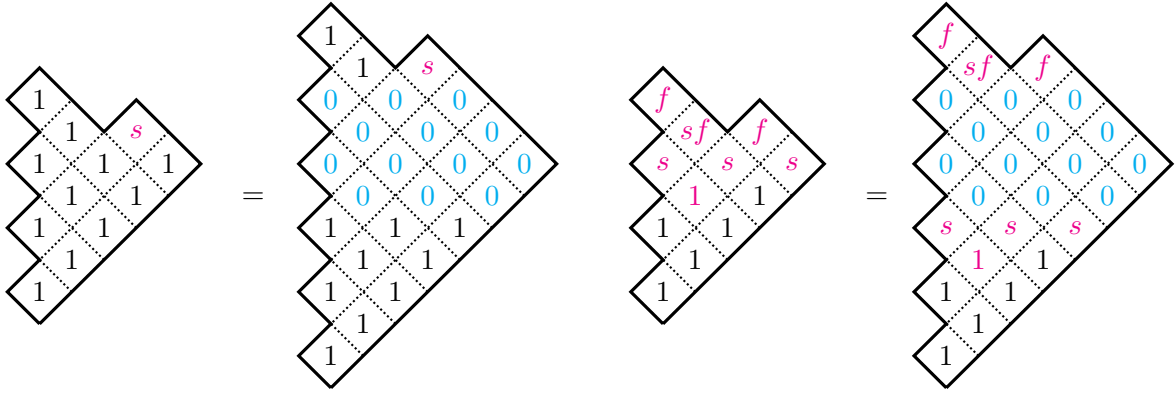

$$
\begin{minipage}{3cm}

  \end{minipage}$$
\caption{Redrawing tableaux of shape $\la=(1,2,3,4,2)$ as tableaux of shape $\mu=(1,2,3,4,5,6,4)$ where $(1,2,3,4,2)=(1,2,3,4,5,6,4)-\cp$, where $\CP$ is depicted with blue zeroes. Note that in both equalities $\mq\prec\prec\cp$. In the first equality $\mq,\cp\in \underline{\mu}$ do commute but in the second equality $\mq,\cp\in \underline{\mu}$ don't commute, so when drawing the tableau $\stt_\mu^{\la-\mq}$ the $s$ and $1$-oriented tiles of $\MQ$ fall two places to be below $\CP$.}
\label{abba}
\end{figure}

\begin{figure}[ht!]
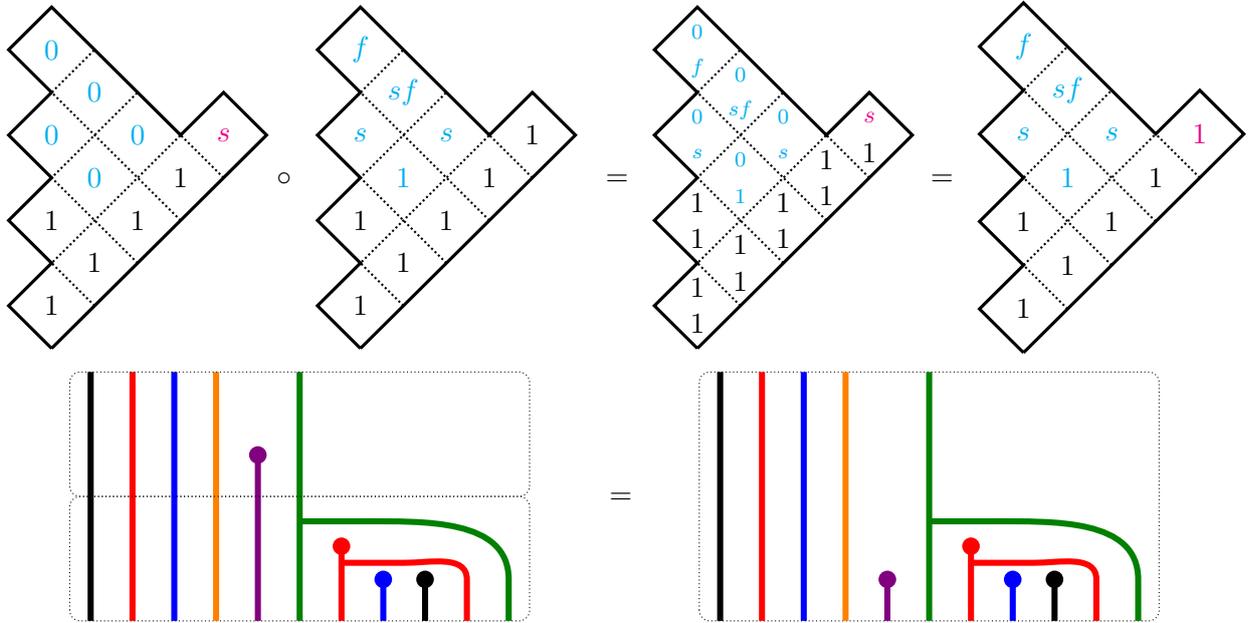

$$
\begin{minipage}{3.55cm}
\end{minipage}
$$
\caption{A product of commuting diagrams on tableaux.}
\label{zeroorientation}
\end{figure}
Now,  let  $\cp,\mq \in \underline{\mu}$ and $\mq\in \underline{\nu}$ be such that $\cp$ and $\mq$ do not commute in $\underline{\mu}$ (this necessitates that either $\mq \prec \cp$ or $\mq \prec \prec \cp$). 
We proceed as above, rewriting the tiles in $\CP$ so as to have a $0$-orientation, and then we let each tile in $\MQ$ with an $s$-orientation or $1$-orientation fall down one place (from $[r,c]$ to $[r-1,c-1]$) if $\cp$ and $\mq$ are non-commuting (as in \cref{abba2}) or two places (from $[r,c]$ to $[r-2,c-2]$) if $\cp$ and $\mq$ are doubly non-commuting (as on the right-hand side of \cref{abba}), so that they fall below $\CP$. We leave all other tiles unchanged.  
When multiplying two elements in this way, we will represent the product by splitting each tile in half with the label of the top half corresponding to the left element and the label of the bottom half corresponding to the right element in the multiplication.  
Examples of this are shown in the first equalities of \cref{zeroorientation,zeroorientation2}.

\begin{figure}[ht!]
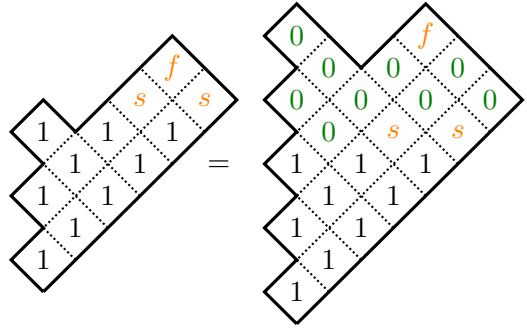

$$
\begin{minipage}{2.5cm}
  \end{minipage}$$
\caption{We redraw a tableau of shape $(1,2,3,2,2,2)$ and weight $(1,2,3,2,1)=(1,2,3,2,2,2)-\ot$ as a tableau of shape $(1,2,3,4,5,3,3)=(1,2,3,2,2,2)+\gr$. The tiles ${\color{darkgreen}R}$ (resp. ${\color{orange}T}$) are depicted with green (resp. orange) zeroes. Note the s-oriented tiles only fall one place in this case as $\ot\prec\gr$ and $\ot,\gr$ do not commute.}
\label{abba2}
\end{figure}

When considering the dual Soergel graphs $D_\la^\mu = (D_\mu^\la)^*$, we will represent them via the same oriented tableau as $D_\mu^\la$ except that we will replace all $s$-orientation (respectively $f$-orientation, or $sf$-orientation) by the symbol $s^*$ (respectively $f^*$, or $sf^*$). An example of this is given in \cref{zeroorientation2}.

\begin{prop}
\label{morepics}
The relations depicted in \cref{spotpic1,spotpic2} hold.

\begin{figure}[ht!]
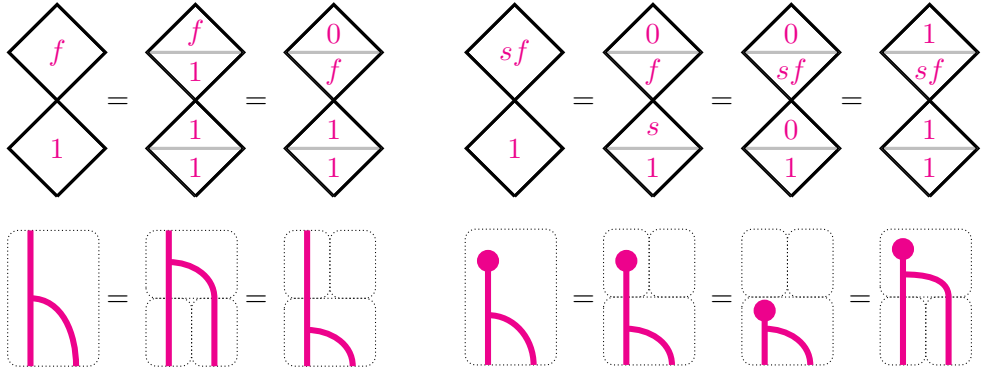


$$
\begin{minipage}{1.6cm}
\end{minipage} 
  $$

\caption{A few fork (and fork-spot) relations on tableaux.  One can take the dual of these relations in the same manner as \cref{spotpic2}.}
\label{spotpic1}
\end{figure}

\end{prop}

\begin{proof}
These are all restatements of the relations given in the monoidal presentation of the Hecke category in \cref{Hecke}.
\end{proof}

\begin{rmk}
\label{blank space baby}
In any Soergel diagram we can always isotope strands around blank space (if required). As a consequence we obtain many more relations from \cref{morepics} where the resulting diagram Soergel diagram contains blank space.
For example, the relation given in \cref{spotpic3} is simply a consequence of the final equality of \cref{spotpic1}.

\begin{figure}[ht!]
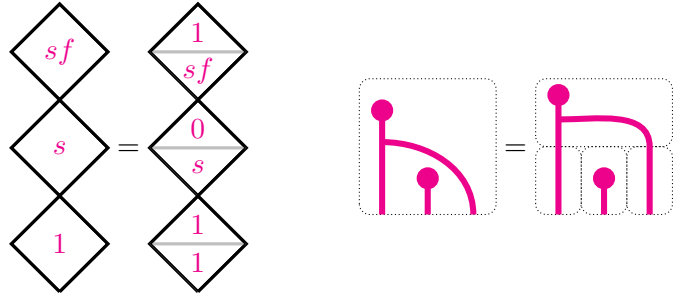

$$\begin{minipage}{1.2cm}
\end{minipage}
$$
\caption{The final relation of \cref{spotpic1} applied with a tile with empty northern reading word in between the tiles of the relation.}
\label{spotpic3}
\end{figure}
\end{rmk}

 \subsection{Generators for the Hecke category}
     We are now able to prove that the algebra $\mathcal{H}_{(D_n A_{n-1})}$ is generated in degrees 0 and 1.
\begin{prop}\label{generatorsarewhatweasay}

The algebra $\mathcal{H}_{(D_n A_{n-1})}$ is generated by the elements 
$$\{D^\la_\mu,
D_\la^\mu \mid 	
\la, \mu \in \mptn  \text{ with $\la = \mu - \cp$ for some $\cp \in \underline{\mu}$} 	\}\cup\{ D_\mu^\mu = {\sf 1}_\mu \mid \mu \in \mptn 	\}.
$$
\end{prop}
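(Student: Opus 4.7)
The strategy is to exploit the cellular basis of \cref{cellular basis} and proceed by a double induction, reducing an arbitrary basis element to a product of elements from the proposed generating set. Since $(D_\la^\mu)^\ast = D^\la_\mu$, the proposed generating set is $\ast$-closed, so the subalgebra $A$ it generates is stable under $\ast$; writing $D_\la^\mu D^\la_\nu = D_\la^\mu \cdot (D_\la^\nu)^\ast$, it therefore suffices to prove that every $D_\la^\mu$ (with $\la\leq\mu$ and $\underline{\mu}\la$ oriented) lies in $A$.

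We induct outer-to-inner: outer on $\la$ in the Bruhat order, inner on $d = \deg(\underline{\mu}\la)$. The inner base case $d=0$ gives $\la=\mu$ and $D_\mu^\mu = {\sf 1}_\mu \in A$. For $d\geq 1$ we invoke \cref{flipper} to write $\la$ as obtained from $\mu$ by flipping cups $\cp^1, \dots, \cp^d \in \underline{\mu}$, and select $\cp$ among them to be maximal with respect to the partial order generated by $\prec$ and $\prec\prec$ (\cref{non-com:def,doublenon-com:def}). Setting $\nu=\mu-\cp$, the case analysis of adjacent cups (\cref{adj lem0,adj lem1,adj lem2}, generalised via \cref{general}) together with \cref{commie,commie2} verifies that $\underline{\nu}\la$ is oriented of degree $d-1$. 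Thus $D_\nu^\mu\in A$ is a listed generator, while $D_\la^\nu\in A$ by the inner hypothesis, and their product lies in $A$.

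The core step is to identify this product as an expression that isolates $D_\la^\mu$. \cref{cellular basis} forces the expansion
\[
D_\nu^\mu\cdot D_\la^\nu \;=\; c\,D_\la^\mu \;+\; \sum_{\alpha<\la}c_\alpha\,D_\alpha^\mu D^\alpha_\la,
\]
because both sides share the boundary data $(\stt_\la,\stt_\mu)$ and the only cellular basis element with this boundary and middle index $\la$ is $D_\la^\mu$. We extract the leading coefficient $c=1$ by passing to the oriented-tableau interpretation of \cref{Go forth and multiply}: redrawing $\stt_\nu^\la$ as a shape-$\mu$ tableau inserts $0$-orientations precisely on $\CP$, and the maximality of $\cp$ ensures that every non-trivial label of the redrawn $\stt_\nu^\la$ lies outside $\CP$, so the stacked tableau reduces via the local identities of \cref{spotpic1,spotpic2} (and \cref{blank space baby}) to $\stt_\mu^\la$ with coefficient $1$. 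The correction terms $D_\alpha^\mu D^\alpha_\la$ with $\alpha<\la$ lie in $A$ by the outer hypothesis applied to both factors (using $\ast$-closure to handle $D^\alpha_\la = (D_\alpha^\la)^\ast$), so rearranging yields $D_\la^\mu\in A$.

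The main obstacle will be the leading-coefficient computation $c=1$: it requires tracing the interaction of $\cp$ with each remaining flipped cup across all of the adjacency scenarios of \cref{adj lem0,adj lem1,adj lem2}, in both the decorated and undecorated settings, and reconciling the ``falling'' tile behaviour of non-commuting pairs (\cref{abba,abba2}) with the $0$-orientation relabelling. The maximality of $\cp$ is precisely what prevents non-trivial labels of $\stt_\mu^\nu$ and the redrawn $\stt_\nu^\la$ from overlapping on any tile, which in turn prevents the appearance of additional correction terms beyond the tractable $\alpha<\la$ family handled by the outer induction.
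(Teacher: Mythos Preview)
Your proposal is correct and takes essentially the same approach as the paper: induct on the degree, choose a maximal flipped cup $\cp$, set $\nu=\mu-\cp$, and verify via the oriented-tableau calculus that the product $D_\nu^\mu D_\la^\nu$ recovers $D_\la^\mu$. The one difference is that the paper proves the \emph{exact} equality $D_\nu^\mu D_\la^\nu = D_\la^\mu$ directly (using precisely the local identities you cite for ``$c=1$''), so no lower-order correction terms ever appear; your outer induction on $\la$ in the Bruhat order and the cellular expansion are therefore unnecessary scaffolding. Indeed, your own argument that the stacked tableau reduces to $\stt_\mu^\la$ via \cref{spotpic1,spotpic2} already establishes the exact equality, since those relations are all single-term identities with no summands --- so once you have that, the $\sum_{\alpha<\la}$ terms vanish and a single induction on degree suffices.
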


 \begin{proof}
  It is enough to show that for  all $\la, \mu\in \mptn $ such that $\DP$ is  oriented, $D_\mu^\la$  can be written as a product of these elements. We will proceed by induction on $k = {\rm deg}(\underline{\mu}\la)$. If $k=0$ or $1$, there is nothing to prove.
       If $k \geq 2$, assume that $\DP$ is oriented where $\la= \mu -\sum_{i=1}^{k} \cp^i$; 
  we let $\mq =\cp^j$, for some $1\leq j\leq k$, 
   be such that there does not exist $\cp^i$ with $\mq \prec \cp^i$ 
    or $\mq \prec \prec \cp^i$ for $i\neq j$. Such a cup will always exist; indeed we can take $\mq$ to be the cup satisfying $r_\mq={\rm max}\{r_{\cp^i}|1\leq i \leq k\}$.
  Then we claim that $$D_\mu^\la = D_{\mu - \mq}^\la D_{\mu}^{\mu - \mq}$$ and the result will then immediately follow from the claim by induction.
To verify the claim, note that the oriented tableau, $\stt_{\mu -\mq}^\la$, viewed as a tableau of shape $\mu$, as explained in the last subsection, is obtained from $\stt_\mu^\la$ by setting the orientation of all tiles of $\MQ$ to $0$. 
If there is a $\cp^i\neq \mq$ that does not commute with $\mq$, then each of the s-orientations or 1-orientations on the tiles of $\CP^i$ fall down one or two tiles, to be below $\MQ$.
By assumption on $\mq$, there are no tiles in $\stt_\mu^\la$ lying below $\MQ$ labelled anything other than $1$, so this is perfectly fine.
Using the relations given in \cref{morepics}, we see that $D_{\mu - \mq}^\la D_{\mu}^{\mu - \mq} = D_\mu^\la$ as required. 
The dual element $D_\la^\mu = (D^\la_\mu)^*$ can then be written as the reverse product of the dual degree 1 elements. 
 \end{proof}

\begin{figure}[ht!]
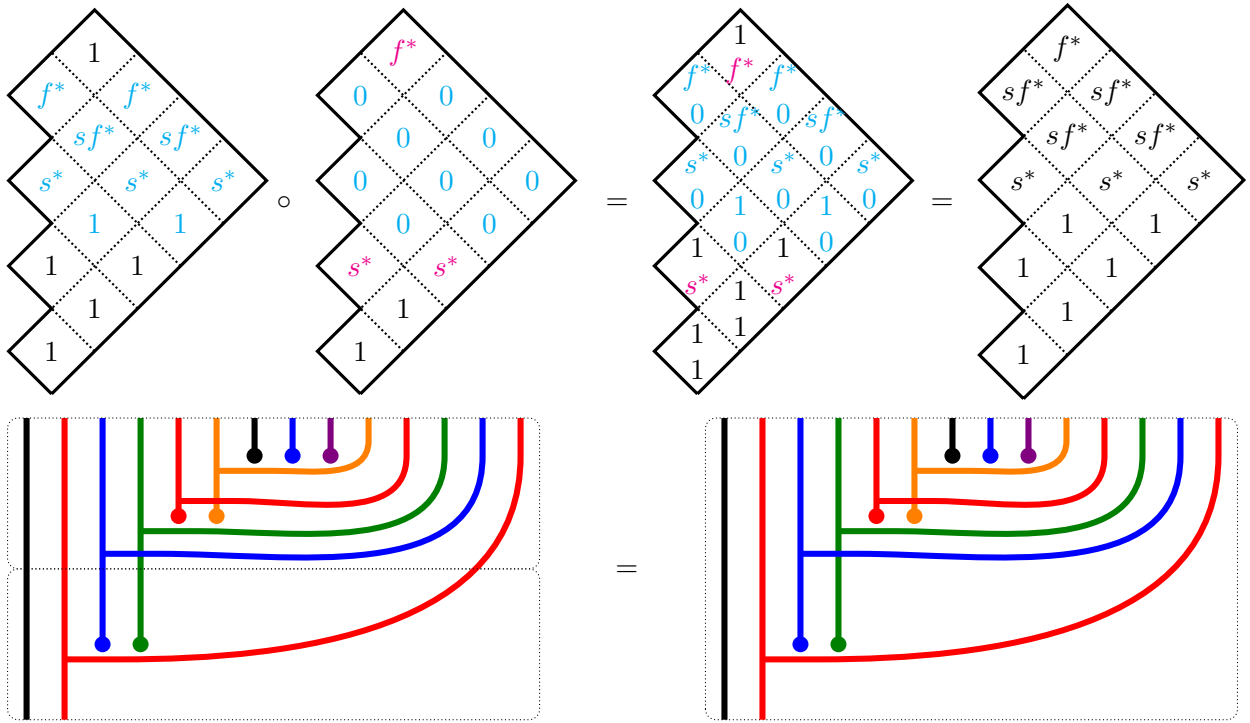

$$
\begin{minipage}{3.55cm}
\end{minipage}
$$
\caption{A product of doubly non-commuting diagrams on tableaux.}
\label{zeroorientation2}
\end{figure}

 \subsection{The presentation of $\mathcal{H}_{(D_n, A_{n-1})}$}
  
At this point we have enough to state the main theorem of the paper, namely that  $\mathcal{H}_{(D_n, A_{n-1}) }$ is a quadratic algebra over $\ZZ$ with an explicit combinatorial presentation given below. The theorem will be proved in \cref{proof}.

 \begin{thm}\label{presentation}
Let $\Bbbk$ be an integral domain containing $i\in\Bbbk$ such that $i^2=-1$. Then the algebra $\mathcal{H}_{(D_n, A_{n-1}) }$ is the  associative $\Bbbk$-algebra generated by the elements 
\begin{equation}\label{geners}
\{D^\la_\mu,
D_\la^\mu \mid 	
\text{$\la, \mu\in \mptn $ with $\la = \mu - {\color{cyan}p}$ for some ${\color{cyan}p}\in \underline{\mu}$} 
	\}\cup\{ {\sf 1}_\mu \mid \mu \in \mptn  \}		
	\end{equation}
	subject to the following relations and their duals. 

	\smallskip\noindent
{\bf The idempotent   
relations:} 
For all $\la,\mu \in \mptn $, we have that 
\begin{equation}\label{rel1}
{\sf 1}_\mu{\sf 1}_\la =\delta_{\la,\mu}{\sf 1}_\la \qquad 
\qquad {\sf 1}_\la D^\la_\mu {\sf 1}_\mu = D^\la_\mu.
\end{equation}

\smallskip\noindent
	{\bf The 
	self-dual relation: } 
	Let  ${\color{cyan}p}\in \underline{\mu}$ and $\la = \mu - {\color{cyan}p}$. If $\cp$ is doubly covered and hence the pair $(\ot, \gr)$ adjacent to $\cp$ is non-commuting with $\gr \prec \ot$, then we have
	\begin{equation}
D_\mu^{\la} D_{\la}^\mu
= (-1)^{b({\color{cyan}p})-1}\Bigg(
2
\!\! \sum_{   \begin{subarray}{c} \mq \in (\underline{\mu} \cap \underline{\la}) \\ \cp \prec\mq,\cp \prec\prec\mq \end{subarray}}
\!\!
(-1) ^{b({\color{magenta}q}) } D^{\la}_{\la- {\color{magenta}q} } D^{\la - {\color{magenta}q}  }_{\la} + 
2  (-1)^{b(\ot)} D^{\la}_{\la- \ot } D^{\la -\ot }_{\la} + 
 (-1)^{b(\gr)} D^{\la}_{\la- \gr } D^{\la - \gr }_{\la}\Bigg)
 \label{selfdualrelD}
\end{equation}
and otherwise we have 
\begin{equation}
D_\mu^{\la} D_{\la}^\mu
= (-1)^{b({\color{cyan}p})-1}\Bigg(
2
\!\! \sum_{   \begin{subarray}{c} \mq \in (\underline{\mu} \cap \underline{\la}) \\ \cp \prec\mq,\cp \prec\prec\mq \end{subarray}}
\!\!
(-1) ^{b({\color{magenta}q}) } D^{\la}_{\la- {\color{magenta}q} } D^{\la - {\color{magenta}q}  }_{\la} + 
\!\!\sum_{  \begin{subarray}{c} \ot \in \underline{\la} \\ \ot \, \text{adj.}\, {\color{cyan}p} \end{subarray}}
\!\!
 (-1)^{b(\ot)} D^{\la}_{\la- \ot} D^{\la - \ot }_{\la}\Bigg), 
 \label{selfdualrelA}
\end{equation}  
where throughout we refer to the set $\underline{\mu} \cap \underline{\la}$ to be the cups in $\underline{\mu}$ that commute with $\cp$ (and hence are in $\underline{\la}$ also) and abbreviate ``adjacent to" simply as ``adj."

\smallskip\noindent
{\bf The commuting relations:} 
Let ${\color{cyan}p},{\color{magenta}q}\in \underline{\mu}$ which commute. Then we have 
\begin{equation}\label{commuting}
D^{\mu -{\color{cyan}p}-{\color{magenta}q}}_{\mu-{\color{cyan}p}}D^{\mu-{\color{cyan}p}}_\mu = D^{\mu-{\color{cyan}p}-{\color{magenta}q}}_{\mu -{\color{magenta}q}}D^{\mu -{\color{magenta}q}}_\mu \qquad
D^{\mu -{\color{cyan}p}}_\mu D^\mu_{\mu - {\color{magenta}q}} = D^{\mu - {\color{cyan}p}}_{\mu - {\color{cyan}p} - {\color{magenta}q}}D^{\mu - {\color{cyan}p} - {\color{magenta}q}}_{\mu - {\color{magenta}q}}.
\end{equation}

\smallskip\noindent
{\bf The non-commuting relation:}   
Let $\cp,\mq \in \underline{\mu}$ with $\mq \prec \cp $ be non-commuting cups. 
Then $\mq$ is adjacent to a pair of commuting non-concentric cups, which 
we label by  ${\color{darkgreen}q}^1$ and ${\color{orange}q}^2$ from left-to-right in $\underline{\mu - \mq}$ (with ${\color{darkgreen}q}^1$ possibly decorated). 
Then we have:
\begin{equation}\label{noncommutingcup1}
D^{\mu -\cp}_\mu D^\mu_{\mu-\mq} = 
 D^{\mu - \cp}_{\mu - \cp - {\color{darkgreen}q}^1}D^{\mu - \cp - {\color{darkgreen}q}^1}_{\mu - \mq} =
 D^{\mu - \cp}_{\mu - \cp - {\color{orange}q}^2}D^{\mu - \cp - {\color{orange}q}^2}_{\mu - \mq}.
\end{equation}

\smallskip\noindent
{\bf The doubly non-commuting relation:}  
Let $\cp,\mq \in \underline{\mu}$ with $\mq \prec \prec \cp $ be doubly non-commuting. 
Then $\mq$ is adjacent to a pair of concentric non-commuting cups, 
 ${\color{darkgreen}q}^1$ and ${\color{orange}q}^2$ with ${\color{darkgreen}q}^1  \prec {\color{orange}q}^2$ in $\underline{\mu - \mq}$ (with ${\color{orange}q}^2$ possibly decorated).
Then we have: 
\begin{equation}\label{noncommutingcup2}
D^{\mu -\cp}_\mu D^\mu_{\mu-\mq} = D^{\mu -\cp }_{\mu - \cp - {\color{orange}q}^2}D^{\mu -\cp- {\color{orange}q}^2}_{\mu -\mq}   
\end{equation}

\smallskip\noindent
{\bf The adjacency relation}  
Given $\la=\mu-\cp$,  suppose that  ${\color{cyan}p}\in \underline{\mu}$ and $\ot\in \underline{\la}$  are  adjacent.
Then we have:
\begin{equation}\label{adjacentcup}
D^{\mu - {\color{cyan}p} - \ot}_{\mu - {\color{cyan}p}}D^{\mu - {\color{cyan}p}}_\mu = 
\left\{ 
.$$
	
  \end{rmk}

 \section{Contraction }
 \label{Contraction Sect}

Before proving \cref{presentation} we will first make a slight detour to construct contraction maps. These will allow us to compare weights, cups and Hecke categories of different sizes. This will allow us to limit our sights to \lq \lq small cases", where the cup diagrams have the minimal number of vertices, like in \cref{small example list}, during the proof of \cref{presentation} by providing an easy way to generalise these smaller cases to all of $\mathcal{H}_{(D_n, A_{n-1})}$.

\begin{defn}
Define $ \mathscr{P}^k_{n}$ to be the set of  tile-partitions in $\mptn$ that have a removable tile of content $k$. 
We call such tile-partitions {\sf contractible at $k$}.
 \end{defn}

\begin{rmk}
In terms of cup diagrams, we have that
 $\la\in\mathscr{P}^k_{n}$ for $ k\neq 0$ if and only if there exists $\cp \in \underline{\mu}$ with $(l_\cp,r_\cp)=(k-\frac{1}{2},k+\frac{1}{2})$.
Similarly, $\la\in \mathscr{P}^0_{n}$ if and only if there exists a decorated $\mq \in \underline{\la}$ with $(l_\mq,r_\mq)=(\frac{1}{2},\frac{3}{2})$.

\end{rmk}

\begin{defn}
For  $0 \leq k \leq n$ we define the {\sf contraction map} $\Phi_k:  \mathscr{P}^k_{n} 
 \longrightarrow
  \mathscr{P}_{n-2}$ on weights by setting $\Phi_k (\la)$ for $\la \in  \mathscr{P}^k_{n}$ to be the weight obtained from $\la$ by removing these vertices.
  \end{defn}
 
\begin{rmk}

Strictly speaking, we $(i)$ fix the vertices at $x=\frac{1}{2}, \dots, k-\frac{3}{2}$, $(ii)$ then delete the vertices at $x=k-\frac{1}{2},k+\frac{1}{2}$, $(iii)$ then shift the vertices at $x=k+\frac{3}{2},\dots,n-\frac{1}{2}$ to $x-2$ and $(iv)$ finally  orientate the first node (at $x=\frac{1}{2}$) as required to maintain an even number of $\up$ vertices.
See \cref{dilation on cups} below for some examples of this.
\end{rmk}

    \begin{figure}[ht!]
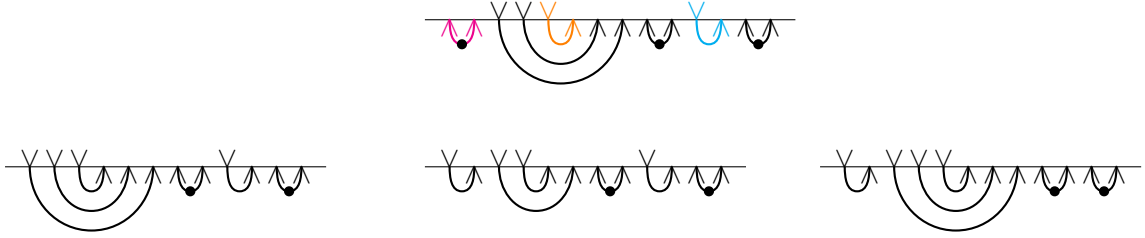

     $$   
$$ 
    
    \caption{$\la=(1,2,3,4,5,6,7,8,9,8,8,8,3)$ is contractible at $k=0,15,11$. We depict below the contraction of $\la$ at $k={\color{magenta}0},{\color{orange}5}$ and ${\color{cyan}11}$ respectively; these are all the $k$ for which $\la$ is contractible.}
    \label{dilation on cups}
    \end{figure} 
 
The following lemmas follow directly from the definitions given above.

\begin{lem}
The map $\Phi_k$ is bijective.
\end{lem}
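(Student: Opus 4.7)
The plan is to prove bijectivity by constructing an explicit inverse $\Psi_k : \mathscr{P}_{n-2} \to \mathscr{P}^k_n$ and verifying the two composites.

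First I would define $\Psi_k$ case-by-case on a weight $\mu \in \mathscr{P}_{n-2}$, described on the underlying labelled strip. For $k \neq 0$, the definition is: insert two new vertices at the coordinates $k-\tfrac{1}{2}$ and $k+\tfrac{1}{2}$, shift the vertices of $\mu$ originally at coordinates $k-\tfrac{1}{2},\dots,n-\tfrac{5}{2}$ to $k+\tfrac{3}{2},\dots,n-\tfrac{1}{2}$, label the two inserted vertices $\down$ and $\up$ respectively, and finally flip the label at $x = \tfrac{1}{2}$ whenever doing so is needed to restore an even number of $\up$ labels. For $k = 0$, the definition is simpler: shift all of $\mu$'s vertices two places to the right and insert $\up\up$ at the coordinates $\tfrac{1}{2},\tfrac{3}{2}$; this preserves the parity of $\up$ vertices so no further adjustment is needed.

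Next I would verify that $\Psi_k(\mu)$ really lies in $\mathscr{P}^k_n$, i.e.\ that its cup diagram contains the appropriate cup at vertices $k,k+1$. For $k \neq 0$, the two inserted vertices carry labels $\down\up$ and are adjacent in the strip, so the first step of the cup-matching algorithm from \cref{drawcups} forces an undecorated cup joining them, giving a removable tile of content $k$. For $k = 0$, the two inserted $\up$ labels sit at the very left of the strip; since $\down\up$-matching only proceeds left-to-right with $\down$ on the left, these two $\up$'s survive the first phase of the algorithm and are then necessarily joined by a decorated cup in the second phase (being the leftmost $\up\up$ neighbours), again producing a removable tile of content $0$.

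Finally I would check $\Phi_k \circ \Psi_k = \mathrm{id}_{\mathscr{P}_{n-2}}$ and $\Psi_k \circ \Phi_k = \mathrm{id}_{\mathscr{P}^k_n}$ by direct bookkeeping. In both composites, the inner map inserts (resp.\ removes) the two vertices at the specified positions, and the outer map removes (resp.\ inserts) them; since the position and labels of the two relevant vertices in any $\la \in \mathscr{P}^k_n$ are forced by the definition of $\mathscr{P}^k_n$ (an undecorated cup at $(k-\tfrac{1}{2},k+\tfrac{1}{2})$ forces $\down\up$, a decorated cup at $(\tfrac{1}{2},\tfrac{3}{2})$ forces $\up\up$), the two operations undo each other on the bulk of the vertices. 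The only subtle point is the parity correction at $x = \tfrac{1}{2}$: in $\Phi_k$ a flip is applied exactly when the removed cup is undecorated (removing one $\up$), and in $\Psi_k$ a flip is applied exactly in the same case (before inserting one $\up$); these flips cancel in both directions.

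The main obstacle, though really only a verification, is ensuring the parity-flip conventions match up correctly when the coordinate $k$ is small (especially $k=1$, where the flipped vertex at $x=\tfrac{1}{2}$ is immediately shifted away by the insertion); a careful case split on $k=0$, $k=1$, and $k \geq 2$ resolves this cleanly.
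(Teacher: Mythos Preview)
Your proposal is essentially correct; the paper does not give a detailed argument here, simply asserting that the lemma ``follows directly from the definitions given above.'' Your explicit-inverse construction is the natural way to flesh this out, and it works as stated for $k=0$ and $k\geq 2$.

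One point to tighten: as literally written, your $\Psi_1$ does not land in $\mathscr{P}^1_n$. You describe the operation as ``insert $\down\up$ at positions $\tfrac12,\tfrac32$, shift, then \emph{finally} flip at $x=\tfrac12$''; but after insertion the vertex at $x=\tfrac12$ is the inserted $\down$, and flipping it yields $\up\up$ at the first two positions, producing a decorated cup and hence an element of $\mathscr{P}^0_n$ rather than $\mathscr{P}^1_n$. The correct inverse for $k=1$ flips the first vertex of $\mu$ \emph{before} inserting (equivalently, flips position $\tfrac52$ in the result), so that the output is $(\down,\up,\bar\mu_1,\mu_2,\ldots,\mu_{n-2})$. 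Your closing parenthetical (``the flipped vertex at $x=\tfrac12$ is immediately shifted away by the insertion'') suggests you have the right picture in mind, but it contradicts the ``finally flip'' ordering in the main description. Stating the $k=1$ case separately and explicitly would remove the ambiguity.
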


\begin{lem} 
\label{degree}
 Let $\la, \mu \in  \mathscr{P}^k_{n}$.
We have that $\DP$ is oriented 
if and only if $\underline{\Phi_k(\mu)}\Phi_k(\la)$ is oriented.
If $\la= \mu - \sum_{i=1}^{j}\mq^i$ and for some $1\leq i \leq l$, we have that $(l_{\mq^i},r_{\mq^i})= (k-\frac{1}{2},k+\frac{1}{2})$, then $\DP$ is of degree j and $\underline{\Phi_k(\mu)}\Phi_k(\la)$ is of degree $j-1$. If no such $\mq^i$ exists then both $\DP$ and $\underline{\Phi_k(\mu)}\Phi_k(\la)$ are both of degree j. 
\end{lem}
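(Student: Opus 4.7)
The plan is to deduce both parts of the lemma from a single structural observation: under the contraction $\Phi_k$, the cup diagram $\underline{\mu}$ loses exactly the small cup $\cp_\mu$ at positions $(k-\tfrac12, k+\tfrac12)$ (decorated iff $k=0$) together with its two vertices, while all other strands are preserved up to the coordinate shift $x \mapsto x-2$ for $x > k+\tfrac12$.

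I would first establish this locality claim in detail. If $\mu \in \mathscr{P}^k_n$, then $\underline{\mu}$ contains the small cup $\cp_\mu$ by the remark after the definition of $\mathscr{P}^k_n$. Every other strand of $\underline{\mu}$ has its endpoints disjoint from $\{k-\tfrac12, k+\tfrac12\}$, because no strand can cross $\cp_\mu$ and no other strand shares an endpoint with it. Under the coordinate shift above, these remaining strands biject with the strands of $\underline{\Phi_k(\mu)}$, preserving their type (cup or propagating ray, decorated or undecorated). This holds because the construction of \cref{drawcups} is local: each pairing produced at steps (i) and (iii) depends only on the labels strictly between the endpoints in question, and no such labels lie between the endpoints of $\cp_\mu$. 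Moreover, $\Phi_k$ removes either a $\down\up$ (for $k \neq 0$) or a $\up\up$ (for $k = 0$) pair, so the parity of $\up$ vertices is preserved, step (iv) of the contraction is a no-op, and the possibly-decorated propagating ray persists verbatim.

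For the orientability part, $\underline{\mu}\la$ is oriented iff every strand of $\underline{\mu}$ is correctly (flip-)oriented with respect to $\la$. Because $\la \in \mathscr{P}^k_n$, the labels of $\la$ at $k\pm\tfrac12$ must be $\down\up$ (for $k \neq 0$) or $\up\up$ (for $k=0$), so $\cp_\mu$ is automatically anti-clockwise oriented in $\underline{\mu}\la$. By the bijection, the orientation conditions on the remaining strands of $\underline{\mu}$ relative to $\la$ coincide with those for the strands of $\underline{\Phi_k(\mu)}$ relative to $\Phi_k(\la)$, and the equivalence follows. For the degree part, the same bijection yields $\deg(\underline{\mu}\la) = \deg(\underline{\Phi_k(\mu)}\Phi_k(\la)) + \varepsilon$, where $\varepsilon \in \{0,1\}$ records whether $\cp_\mu$ is clockwise oriented in $\underline{\mu}\la$. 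By \cref{flipper}, $\cp_\mu$ is clockwise iff $\la$ is obtained from $\mu$ by flipping $\cp_\mu$ (along with possibly other cups), i.e., iff $\cp_\mu$ is among the flipped cups $\mq^i$ --- equivalently, $(l_{\mq^i}, r_{\mq^i}) = (k-\tfrac12, k+\tfrac12)$ for some $i$. Both cases of the degree statement follow immediately.

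The main obstacle is the locality claim, especially verifying that step (iii) of \cref{drawcups} commutes with removing the two vertices of $\cp_\mu$. This reduces to confirming that a ``neighbouring $\up\up$ pair" in $\mu$ remains neighbouring in $\Phi_k(\mu)$, which holds because no strand crosses $\cp_\mu$, so the linear order on unpaired $\up$'s after step (i) is preserved by the contraction. The case $k=0$ requires slight extra care since $\cp_\mu$ itself is produced at step (iii) rather than step (i), and one must confirm that the decorated-ray rule of step (iv) of \cref{drawcups} carries over; but parity preservation and the absence of interfering strands at the leftmost two vertices make this routine.
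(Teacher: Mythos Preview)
Your parity claim is wrong for $k \neq 0$: removing a $\down\up$ pair removes exactly one $\up$, so the parity of $\up$'s changes from even to odd, and step (iv) of the contraction \emph{does} flip the label at the new first node. Consequently your structural claim that all other strands are preserved ``decorated or undecorated'' fails: the leftmost strand of $\underline{\mu}$ other than $\cp_\mu$ can change its decoration under $\Phi_k$. (This is exactly the exception carved out in \cref{breadth}: decorations agree unless $l_{\cp'}=\tfrac12$ and $k\neq 0$.) For instance, with $n=4$, $k=3$, $\mu=\down\up\down\up$, the undecorated cup at $(\tfrac12,\tfrac32)$ in $\underline{\mu}$ becomes a decorated cup in $\underline{\Phi_3(\mu)}=\underline{\up\up}$.

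The lemma is still true, but you need an extra observation: the flip of the first-node label (which happens simultaneously in $\Phi_k(\mu)$ and $\Phi_k(\la)$) and the decoration change of the leftmost strand cancel. Concretely, an undecorated strand with left $\la$-label $x$ and a decorated strand with left $\la$-label $\bar{x}$ (the flip of $x$) impose the same orientation condition on the right endpoint, and the clockwise/anti-clockwise status --- which depends only on the right label --- is unchanged. Once you insert this cancellation argument, the rest of your locality reasoning goes through. The paper itself simply asserts the lemma ``follows directly from the definitions,'' so your level of detail is already beyond what the paper provides; you just need to patch this one point.
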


\begin{lem}
\label{breadth}
If $\la = \mu - \cp$ for some $\cp\in \underline{\mu}$ with $(l_{\cp},r_{\cp})\neq (k-\frac{1}{2},k+\frac{1}{2})$  then we have $\Phi_k(\la) = \Phi_k(\mu) - \cp'$ where $\cp'\in \underline{\Phi_k(\mu)}$   satisfies 
\begin{center}
\begin{tabular}{ll}
 $b(\cp')=b(\cp)-2$ & if $k < l_{\cp}$ and $\cp$ is decorated, \\
$b(\cp')=b(\cp)-1$ & if $l_{\cp}<k <r_{\cp}$,\\
 $b(\cp')=b(\cp)$ & if $k> r_{\cp}$ or $k < l_{\cp}$ and $\cp$ is undecorated.
\end{tabular}
\end{center}
Furthermore, the decorations of 
  $\cp \in  \mu $  and 
  $\cp'\in \underline{\Phi_k(\mu)}$  
are identical unless      $l_{\cp'} = \frac{1}{2}$ and $k\neq0$. 

In all cases we 
write  
$\Phi_k(\cp):=\cp'$.
\end{lem}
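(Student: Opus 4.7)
The proof is a direct case analysis based on the position of $k$ relative to $\cp$, and it breaks naturally into two parts: the breadth computation and the decoration preservation.

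For the breadth statement, I would first note that $\Phi_k$ simply deletes the two vertices at positions $k-\tfrac12$ and $k+\tfrac12$ and then re-labels the remaining vertices by shifting those to the right of $k$ by $-2$. The new cup $\cp'$ lies on exactly the same set of original vertices as $\cp$ does (just renumbered). Then the three cases $k<l_\cp$, $l_\cp<k<r_\cp$, and $k>r_\cp$ completely determine how $(l_{\cp'},r_{\cp'})$ relates to $(l_\cp,r_\cp)$:
\begin{itemize}
\item If $k>r_\cp$, both endpoints are unchanged, so $b(\cp')=b(\cp)$ in either parity.
\item If $l_\cp<k<r_\cp$, only $r_\cp$ is shifted by $-2$. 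Plugging into the formulas
$b(\cp)=\tfrac12(r_\cp-l_\cp+1)$ or $b(\cp)=\tfrac12(r_\cp+l_\cp)$ yields $b(\cp')=b(\cp)-1$.
\item If $k<l_\cp$, both endpoints shift by $-2$; this leaves $r_\cp-l_\cp$ (hence the undecorated breadth) unchanged, but subtracts $2$ from $r_\cp+l_\cp$ (hence the decorated breadth).
\end{itemize}
In all cases this matches the claim, and since $\cp$ joins the same pair of weight-vertices before and after contraction, the fact that $\underline{\Phi_k(\mu)}\Phi_k(\la)$ is still oriented (\cref{degree}) together with \cref{flipper} identifies $\Phi_k(\la)$ as $\Phi_k(\mu)-\cp'$.

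For the decoration preservation, I would invoke the construction algorithm of \cref{drawcups}. The first two steps of the algorithm (repeated joining of neighbouring $\down\up$ pairs into undecorated cups) are intrinsic to the labelled pairs of vertices, and are unaffected by deleting two vertices that themselves participate only in the omitted cup at $k$; so every undecorated cup of $\underline{\mu}$ other than the removed one survives (undecorated) in $\underline{\Phi_k(\mu)}$. The decorated cups and the (possibly decorated) propagating ray are determined only by the leftward $\up$-pattern of the residual weight after step~(ii). Deleting the two vertices at $k\pm\tfrac12$ can only alter which $\up$ vertex is leftmost if $k=0$ (when the two deleted vertices sit at the far left) or if the cup $\cp'$ itself becomes the leftmost cup in $\underline{\Phi_k(\mu)}$, i.e. $l_{\cp'}=\tfrac12$. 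Outside these cases, the relevant $\up\up$ pairings coincide, and steps (iii)--(iv) of the algorithm decorate the same cups and rays in $\underline{\Phi_k(\mu)}$ as they do in $\underline{\mu}$, which is precisely the stated exception.

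The main (very mild) obstacle is bookkeeping the decoration in the special case $l_{\cp'}=\tfrac12$, $k\neq 0$: here one must accept that decorations may genuinely change, so the statement is only made up to that exception. Otherwise the proof is a careful but routine check that the algorithm of \cref{drawcups} commutes with $\Phi_k$ on the non-removed cups.
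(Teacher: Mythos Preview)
The paper does not give a detailed proof; it simply states that this lemma (together with the two preceding it) ``follows directly from the definitions given above.'' Your breadth computation is correct and is precisely the endpoint bookkeeping that ``follows from definitions'' entails.

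Your decoration argument, however, misses the actual mechanism. You argue via the cup-drawing algorithm of \cref{drawcups} and the ``leftward $\up$-pattern,'' concluding that decorations might change when $k=0$ or when $l_{\cp'}=\tfrac12$. But the real source of the exception is step (iv) of the definition of $\Phi_k$: after deleting the two vertices and shifting, one re-orients the \emph{first} vertex (at $x=\tfrac12$) to restore even parity of $\up$'s. When $k\neq 0$ the deleted cup is an undecorated $\down\up$ pair, so exactly one $\up$ is removed and the first vertex must be flipped; when $k=0$ the deleted cup is a decorated $\up\up$ pair, so parity is already even and no flip occurs. Since the decoration of a cup is determined by the labels at its two endpoints (decorated iff both are $\up$), and these labels are otherwise untouched by $\Phi_k$, the decoration of $\cp'$ differs from that of $\cp$ exactly when $l_{\cp'}=\tfrac12$ and $k\neq 0$. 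In particular your claim that $k=0$ is a potentially problematic case is backwards: it is precisely the case where decorations are \emph{always} preserved, which is why the lemma's exception explicitly requires $k\neq 0$.
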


 We now extend the contraction map $\Phi_k$ to homomorphisms for the Hecke categories. We will abuse notation and use the same notation for both contraction maps. 

\begin{thm}\label{dilation}
Let $\Bbbk$ be a commutative integral domain and let $i \in \Bbbk$ be a square root of $-1$ and set $1_k =  \sum_{\mu \in  \mathscr{P}^k_{n}} 1_{\stt_\mu}$.
We define the maps $$\Phi_k : 1_k\mathcal{H}_{(D_n, A_{n-1})}1_k \to \mathcal{H}_{ (D_{n-2}, A_{n-3}) }$$on the generators as follows.
Suppose $\la, \mu \in \mathscr{P}^k_{n} $, with $\la = \mu - \cp$, for some $\cp \in \underline{\mu}$ with $(l_{\cp},r_{\cp})\neq (k-\frac{1}{2},k+\frac{1}{2})$ , we define $\Phi_k({\sf 1}_\mu)= {\sf 1}_{{\Phi_k(\mu)}}$
and 
$$
\Phi_k(D^\la_\mu)= \left\{ \begin{array}{ll} -D^{\Phi_k(\la)}_{\Phi_k(\mu)} & \mbox{ if $\cp$ is decorated, $k=0,1$ and $k<l_{\cp}$} \\
i \cdot 
D^{\Phi_k(\la)}_{\Phi_k(\mu)}	& \mbox{ if $l_{\cp}<k <r_{\cp}$ and $\CP\cap[\mu]$ forms a local cap with a tile of content $k$   
}\\	
(-i) \cdot 
D^{\Phi_k(\la)}_{\Phi_k(\mu)}	& \mbox{ if $l_{\cp}<k < r_{\cp}$ and $\CP\cap[\mu]$ forms a local cup with a tile of content $k$}\\
D^{\Phi_k(\la)}_{\Phi_k(\mu)} & \mbox{ else.}
 \end{array} \right.
$$Furthermore, we define $\Phi_k(D_\la^\mu) = \Phi_k((D_\mu^\la)^*) = (\Phi_k(D_\mu^\la))^*$. Then $\Phi_k$ extends to an isomorphism of graded $\Bbbk$-algebras. 
\end{thm}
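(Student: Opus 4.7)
The plan is to verify that $\Phi_k$ is a well-defined graded algebra isomorphism by checking the three things in turn: grading preservation, preservation of the defining relations of Theorem A, and bijectivity. Since the codomain $\mathcal{H}_{(D_{n-2}, A_{n-3})}$ is given by the same quadratic presentation applied to $(D_{n-2}, A_{n-3})$, to prove that $\Phi_k$ extends from generators to a well-defined algebra homomorphism it suffices to verify that each relation \eqref{rel1}--\eqref{adjacentcup} is sent to a valid relation in the codomain. Grading preservation is immediate from \cref{degree}: since we only apply $\Phi_k$ to morphisms between $\la,\mu\in\mathscr{P}_n^k$ in which the particular cup of minimal breadth straddling $k-\frac{1}{2},k+\frac{1}{2}$ is \emph{not} the one being flipped, the degree of $\underline{\mu}\la$ equals the degree of $\underline{\Phi_k(\mu)}\Phi_k(\la)$.

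First I would dispatch the easy relations: the idempotent relations \eqref{rel1} pass through trivially since $\Phi_k$ sends $\sf 1_\mu$ to $\sf 1_{\Phi_k(\mu)}$ and since $\Phi_k$ is a bijection on $\mathscr{P}_n^k$. The commuting relation \eqref{commuting}, the non-commuting \eqref{noncommutingcup1}, and doubly non-commuting \eqref{noncommutingcup2} relations are then straightforward: in each, both sides of the equality involve the same multiset of Dyck paths (each with its own prescribed local cup/cap/decoration data), so they acquire matching products of scalars from the four cases in the definition of $\Phi_k$. The main check is to run through the small-vertex pictures of \cref{small example list} and observe that when $k$ lies strictly between $l_{\cp}$ and $r_{\cp}$ of some cup in the relation, the tile of content $k$ appears identically (up to being a local cup versus a local cap) on both sides of the relation; hence the $\pm i$ factors cancel.

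The harder checks are the self-dual relations \eqref{selfdualrelD}, \eqref{selfdualrelA} and the adjacency relation \eqref{adjacentcup}, both of which carry explicit breadth-dependent signs. For these I would use \cref{breadth} as the fundamental bookkeeping tool. For the self-dual relations, the key observation is that $\Phi_k(D_\mu^\la D_\la^\mu)$ acquires a global scalar $\xi^2$, where $\xi\in\{1,-1,\pm i\}$ is the scalar attached to $\Phi_k(D^\la_\mu)$ by its case in the definition. Meanwhile each summand on the right-hand side is a product $D^\la_{\la-\mq}D^{\la-\mq}_\la$ whose $\Phi_k$-image acquires its own scalar $\eta_\mq^2$, and the global signs $(-1)^{b(\cp)-1}$ and $(-1)^{b(\mq)}$ on either side shift under contraction by precisely the exponent changes dictated by \cref{breadth}. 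The $i^2=-1$ coming from a pair of $\pm i$ factors is exactly what absorbs a breadth drop of $1$, while the explicit $-1$ prefactor in the decorated-covered case absorbs a breadth drop of $2$ for a decorated cup becoming undecorated when $k\in\{0,1\}<l_{\cp}$. One then verifies case-by-case, using \cref{adj lem0,adj lem1,adj lem2} to classify what kinds of $\mq, \ot, \gr$ can appear, that the $\Phi_k$-images of the two sides agree as sums, including the dichotomy between \eqref{selfdualrelD} and \eqref{selfdualrelA} (the former occurring exactly when $\cp$ is doubly covered in $\underline{\mu}$, a condition manifestly preserved by $\Phi_k$ when $(l_\cp, r_\cp)\neq (k-\tfrac12,k+\tfrac12)$). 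The adjacency relation is checked analogously, balancing the $(-1)^{b(\langle\cp\cup\ot\rangle_\mu)-b(\ot)}$ factor against the scalar contributions of $\Phi_k$ on the three generators involved.

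Finally, bijectivity follows from cellularity (\cref{cellular basis}): the map $\Phi_k$ sends each basis element $D_\la^\mu D^\la_\nu$ with $\la,\mu,\nu\in\mathscr{P}_n^k$ to a nonzero scalar multiple of the basis element $D_{\Phi_k(\la)}^{\Phi_k(\mu)} D^{\Phi_k(\la)}_{\Phi_k(\nu)}$, and since $\Phi_k$ is bijective on $\mathscr{P}_n^k$ and all scalars $\pm 1, \pm i$ are units, $\Phi_k$ is a linear bijection onto $\mathcal{H}_{(D_{n-2},A_{n-3})}$. The main obstacle I expect is the bookkeeping in the self-dual relation when $\cp$ is decorated and $k\in\{0,1\}$, because here the contraction can convert a decorated cup into an undecorated one (breadth dropping by $2$), the adjacent pair $(\ot,\gr)$ structure in \eqref{selfdualrelD} can collapse or merge with the doubly-covered $\mq$-summation in \eqref{selfdualrelA}, and one must confirm that the extra $-1$ in the definition of $\Phi_k$ on decorated generators with $k<l_{\cp}$ is precisely what is required to keep both formulations consistent after contraction.
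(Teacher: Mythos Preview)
Your approach has a circularity problem relative to the paper's logical structure. You propose to verify that $\Phi_k$ respects the quadratic relations \eqref{rel1}--\eqref{adjacentcup} of \cref{presentation}, but in the paper \cref{dilation} is established \emph{before} and \emph{used in} the proof of \cref{presentation}: the self-dual and adjacency relations for cups of breadth greater than the minimal case are reduced to the minimal case precisely by applying $\Phi_k$ and invoking \cref{dilation}. So your argument presupposes the very theorem whose proof depends on the result you are proving.

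The paper's route is entirely different and avoids this. It does not verify relations at all; instead it cites the dilation map $\varphi_k$ of \cite[Section 5.3]{ChrisHSP}, which is defined on the \emph{monoidal} (spot, fork, braid, idempotent) generators of the Elias--Williamson presentation and is already known to be a graded isomorphism $\mathcal{H}_{(D_{n-2},A_{n-3})} \to 1_k\mathcal{H}_{(D_n,A_{n-1})}1_k$. The only work remaining is to compute what the inverse $\varphi_k^{-1}$ does to the light-leaves generators $D^\la_\mu$: one rewrites $D^\la_\mu$ and $D^{\varphi_k(\la)}_{\varphi_k(\mu)}$ on oriented tableaux and tracks the powers of $i$ arising from the extra monoidal generators inserted by dilation (three colours for $k\neq 0,1$, five for $k=0,1$), recovering exactly the four-case sign formula. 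Your approach could in principle be salvaged by first giving an independent proof of \cref{presentation} that does not use contraction, but that is substantial and not what you have outlined. There is also a secondary technical issue you have not addressed: the relations \eqref{rel1}--\eqref{adjacentcup} present $\mathcal{H}_{(D_n,A_{n-1})}$, not the idempotent truncation $1_k\mathcal{H}_{(D_n,A_{n-1})}1_k$, and you would need to argue separately that the relations whose intermediate weights all lie in $\mathscr{P}^k_n$ suffice to present the corner.
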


\begin{proof}
The map $\Phi_k$ is the inverse of the dilation map $\varphi_k$, which is defined on the monoidal (spot, fork, braid and idempotent) generators of $\mathcal{H}_{(D_n,A_{n-1})}$  in \cite[Section 5.3]{ChrisHSP} (note that the notation $\ctau = s_{k}$ is used). This dilation map $\varphi_k$ is proven to be an isomorphism of graded $\Bbbk$-algebras onto $1_k \mathcal{H}_{(D_{n+2}, A_{n+1})}1_k$.
In particular if $D$ is a light leaves basis element then for $k\neq0,1$, $\varphi_k(D)$ is obtained (up to sign) by changing each monoidal generator with colour $s_k$ to become a horizontal concatenation of three different coloured monoidal generator as defined in \cite[Section 5.3]{ChrisHSP} and the collection of a sign.
For $k=0,1$, this concatenation (up to sign) is instead made up of five monoidal generators (of four different colours. See in \cite[Figure 55]{ChrisHSP} for a precise example of what a fork and spot generator map to). 
The aforementioned signs are a function of the length of this concatenation and hence dilating at $k=0,1$ carries with it a different factor of $i$ than for $k\neq 0,1$.
Rewriting the generators $D^\la_\mu$  and $D^{\varphi_k(\la)}_{\varphi_k(\mu)}$ on labelled oriented tableaux as in \cref{undec0} the result follows.
See \cref{undec0} for an example of a $k=1$ dilation case pictured on oriented tableaux and \cref{undec2} for the corresponding light leaves basis element. All the other cases can be pictured in very similar manner.
 \end{proof}
  
  \begin{figure}[ht!]
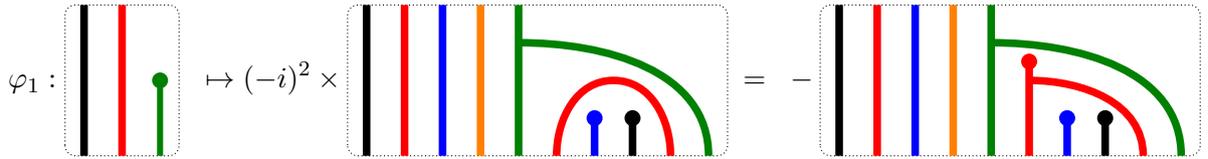

 $$
\Phi_1\Biggl( 
  \begin{minipage}{2.8cm}
\end{minipage}$$
 \caption{Here we show the map $\varphi_1$, the inverse of $\Phi_1$, applied to the light leaves basis element corresponding to the oriented tableau pictured in \cref{undec0}.
 }
  \label{undec2}
\end{figure}
 
\begin{defn}
Let $\la, \mu \in \mptn$ with $\la = \mu-\cp$ for some $\cp \in \underline{\mu}$. We say that $\la\underline{\mu}$ is \textsf{incontractible} if there does not exist   $k\in \ZZ_{\geq0}$ such that $\la, \mu \in\mathscr{P}^k_{n}$.
\end{defn}

\begin{rmk}
It follows from the definition that $\la\underline{\mu}$ being incontractible is equivalent to $\CP=\color{cyan}[r,c]$ being the unique tile in  ${\rm Rem}(\mu)$. Hence, we must have $b(\cp) = 1$.
\end{rmk}

 \section{The presentation and relations for    $\mathcal{H}_{(D_n, A_{n-1}) }$}
 \label{proof}

In this section, we prove the relations stated in \cref{presentation} hold and that they are sufficient to provide a quadratic presentation of $\mathcal{H}_{(D_n, A_{n-1}) }$.
First, we will prove some preparatory lemmas to make the proof slightly quicker. 
The first lemma is a few oriented tableaux relations that follow immediately from the corresponding Soergel diagrams, which will be helpful in calculating multiplication of oriented tableaux in the manner of \cref{morepics}.

\begin{lem}
\label{some pics}
The relations pictured in \cref{spotpic6} hold:
\end{lem}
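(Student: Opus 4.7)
The plan is to reduce each pictorial identity in \cref{spotpic6} to an equality of Soergel graphs in $\mathscr{H}_{(D_n, A_{n-1})}$ and then verify it using the local monoidal relations from \cref{easydoesit}, much in the spirit of how \cref{morepics} and \cref{blank space baby} were handled. Concretely, for each relation I would first interpret the two sides as products of the generators $D_\mu^\la$, $D_\la^\mu$ (with $0$-oriented tiles indicating that the corresponding tensor factor is just an identity idempotent, per the convention set up in \cref{Go forth and multiply}) and then use the up-down construction to replace each oriented tableau by its associated light leaves Soergel graph. Once in Soergel form, the equalities should reduce, after appropriate isotopy, to instances of the fork-spot contraction, the double-fork relation, the one- and two-colour Demazure relations, or commutations of distant-coloured generators, all supplied by \cref{easydoesit}.

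First I would treat the relations whose two sides differ only in tiles of a single content $\csigma$. In that case the underlying Soergel graphs differ only by a local manipulation on strands of colour $\csigma$, and the identity follows directly from one of the local relations (e.g.\ ${\sf fork}^{\csigma\csigma}_\csigma({\sf spot}^\emptyset_\csigma\otimes{\sf 1}_\csigma)={\sf 1}_\csigma$, or the dork relation) combined with the $*$-duality. Next I would handle relations mixing two commuting colours $\csigma,\ctau$: here the desired equality reduces to an instance of the commutation relations ${\sf braid}^{\csigma\ctau}_{\ctau\csigma}$, together with the fact (noted in \cref{blank space baby}) that strands can be freely isotoped around tiles whose northern and southern reading words are empty. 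Finally, for any identity that arises from rewriting a tile with $0$-orientation, the translation back to Soergel graphs converts the $0$-tile into blank space and the relation then collapses to one already handled above.

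The main bookkeeping obstacle will be keeping careful track of signs. When the relation forces a cup in some $\underline{\mu}$ to change its decoration status (or to cross the leftmost vertex at $x=\tfrac{1}{2}$), the contraction analysis in \cref{dilation} shows that such a move carries a factor of $\pm 1$ or $\pm i$; in the oriented-tableau picture this is visible only implicitly, through the choice of orientation labels $1,s,f,sf$. I will therefore verify each relation first at the level of pure Soergel diagrams (where signs are dictated by \cref{easydoesit}) and only afterwards repackage the answer into oriented tableaux, checking that the signs absorbed into the $D^\la_\mu$ notation match on both sides.

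Apart from this sign bookkeeping, no new ideas beyond those already used in \cref{morepics} are required: the relations of \cref{spotpic6} are, by design, direct tableau-level consequences of the monoidal presentation of $\mathscr{H}_{(D_n, A_{n-1})}$, and the truncation to $\mathcal{H}_{(D_n, A_{n-1})} = {\sf 1}_n \mathscr{H}_{(D_n, A_{n-1})}{\sf 1}_n$ preserves all such local identities. The lemma will then be invoked as a computational shortcut throughout the verification of the relations \eqref{rel1}--\eqref{adjacentcup} in \cref{presentation}.
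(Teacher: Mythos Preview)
Your overall approach is correct and matches the paper's: translate each oriented-tableau identity into its underlying Soergel graph and verify it using the local relations of \cref{easydoesit}. The paper in fact treats this lemma as essentially immediate, stating just before it that these ``follow immediately from the corresponding Soergel diagrams'' --- the figure itself displays both the tableau form and the Soergel form side by side, and each Soergel equality is a direct instance of fork-spot contraction (or its dual) together with isotopy.

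Where you diverge from the paper is in the amount of machinery you propose to bring in. All four relations in \cref{spotpic6} involve tiles of a \emph{single} colour $\csigma$; there are no two-colour braids to commute past, and the discussion of ${\sf braid}^{\csigma\ctau}_{\ctau\csigma}$ is unnecessary here. More significantly, your paragraph on sign bookkeeping via \cref{dilation} is a red herring: the contraction maps $\Phi_k$ and their $\pm i$ factors play no role in this lemma, which is a purely local statement about Soergel graphs in $\mathscr{H}_{(D_n,A_{n-1})}$ (not about comparing different ranks). No decorations change, no cups cross the leftmost vertex, and no nontrivial signs arise --- the identities are literally $({\sf spot}^\emptyset_\csigma\otimes{\sf 1}_\csigma){\sf fork}^{\csigma\csigma}_\csigma={\sf 1}_\csigma$ and minor variants thereof. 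Your plan would succeed, but you should strip it down to the single-colour fork-spot argument; the rest is not needed and risks obscuring what is a one-line verification.
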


\begin{figure}[ht!]
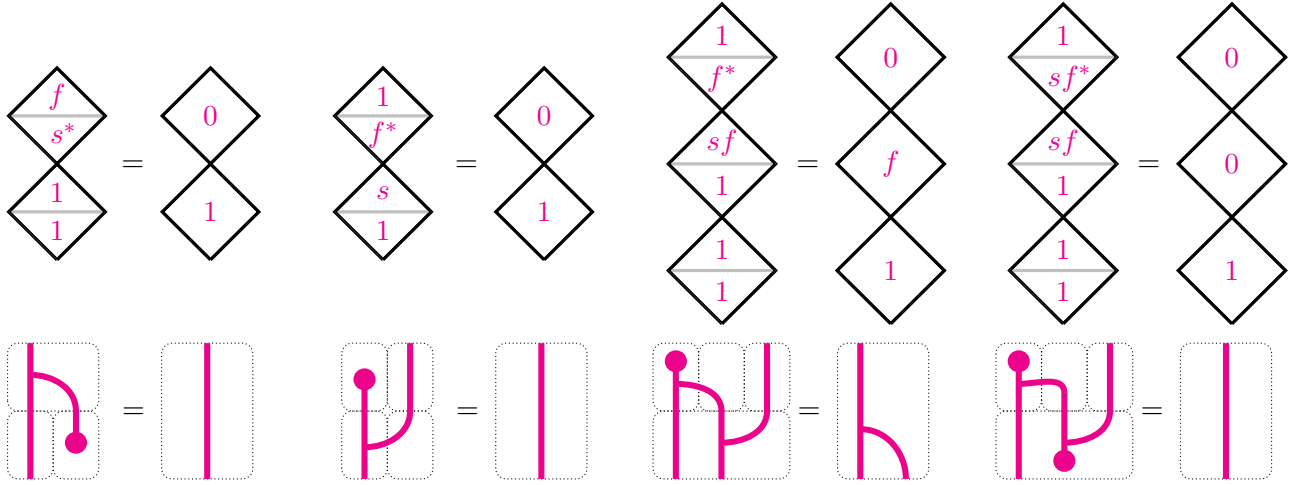

$$
 \begin{minipage}{1.4cm}

    \end{minipage}
    $$
  \caption{A few useful fork-spot relations on oriented tableaux. These relations also hold with tiles corresponding to Soergel diagrams that contain blank space in between, in the sense of \cref{blank space baby}.}
  \label{spotpic6}
  \end{figure}

We will next give a quick definition that we will utilise to prove some further useful lemmas.

\begin{defn}
Given $\la \in \mptn $ with $[x,y] \in \la$ such that $\stt_\la([x,y])=k$ and  ${\sf ct}([x,y])=\ctau$, set 
$$
  {\sf gap}(\stt_\la-[x,y]) = {\sf 1}_{\stt_\la{\downarrow}_{\{1,\dots,k-1\}}} \otimes {\sf spot}^{\ctau}_\emptyset
   {\sf spot}_{\ctau}^\emptyset
   \otimes {\sf 1}_{\stt_\la{\downarrow}_{\{k+1,\dots,\ell(\la)\}}}.  
  $$
\end{defn} 

\begin{rmk}
The oriented tableau for ${\sf gap}(\stt_\la-[x,y])$ is created by placing a {\sf gap-labelled tile} at $\color{cyan}[x,y]$ on the oriented tableau $\stt_\la^{\la}$.
See \cref{gapper} for an example
\end{rmk}

\begin{figure}[ht!]
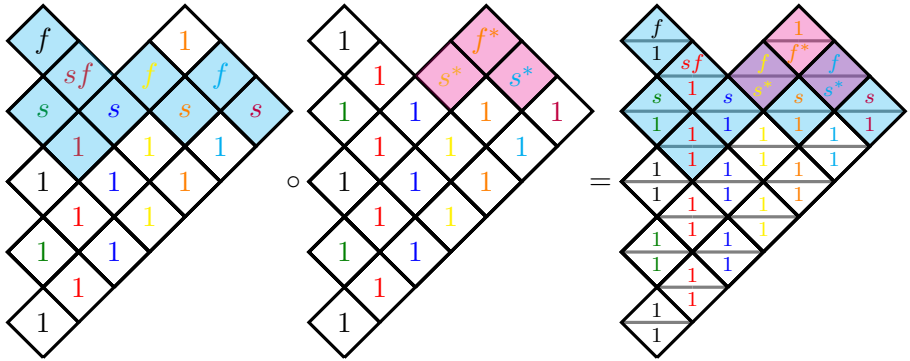

 \begin{minipage}{3cm}
\end{minipage} 
$$  
Finally, the first equality of the last relation is equivalent to \cite[Proposition 4.8]{ChrisHSP} pictured on an oriented tableau. The second equality follows as a corollary of the first, by noting that a gap factors through zero in the middle of a Soergel diagram.
\end{proof}

Next, we will utilise \cref{nullbraidontiles,adjacencywayback} to adapt a result from \cite{ChrisHSP}.
\begin{lem}\label{gap}
Let  $\la \in \mptn $ with $[x,y] \in \la$. Then 
$${\sf gap}(\stt_\la- [x,y]) = 
\begin{cases}
(-1) ^{b \langle x,y  \rangle_{\la} } D^{\la}_{\la- \langle x,y \rangle_{\la} } D^{\la - \langle x,y \rangle_{\la}  }_{\la}		&\text{if $\langle x,y \rangle_{\la}$ is well-defined}\\
0						&\text{otherwise.}
\end{cases}$$
\end{lem}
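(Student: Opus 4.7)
My plan is to prove the lemma by splitting into cases according to whether $\langle x,y\rangle_\la$ is well-defined, and in the affirmative case, inducting on the breadth of that cup.

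If $\langle x,y\rangle_\la$ is undefined, then by \cref{localness} the strand passing through $[x,y]$ in $\underline{\la}$ must meet $[x,y]$ as a local cap or a local cup-cap (the empty case is excluded since $[x,y]\in\la$). In either situation there exists a cup $\mq \in \underline{\la}$ whose Dyck path $\MQ$ occupies the tiles immediately below $[x,y]$. Inserting the gap at $[x,y]$ into $\stt_\la^\la$ produces a diagram to which \cref{adjacencywayback}(iii) applies: the gap sits directly above a configuration equivalent to an $s^\ast/s$ stack on the cap (or cup-cap) of $\mq$, and the identity forces the whole diagram to be zero. If the cup $\mq$ has breadth larger than one, I first apply an appropriate contraction $\Phi_k$ (with $k$ in the interior of $\MQ$) to reduce to $b(\mq)=1$, using Theorem \ref{dilation}, and appeal to the zero identity in the incontractible case.

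For the case where $\cp := \langle x,y\rangle_\la$ is well-defined, I would induct on $b(\cp)$. In the base case $b(\cp)=1$, the Dyck path $\CP$ consists only of the tile $[x,y]$ of content $\csigma = {\sf ct}([x,y])$. Writing out $D^\la_{\la-\cp}$ and $D^{\la-\cp}_\la$ on oriented tableaux via the construction of \cref{Go forth and multiply}, stacking these generators according to the multiplication rules of that section, and applying the null-braid identity \cref{nullbraidontiles}(iii) on the tile $[x,y]$, the product collapses to the gap element at $[x,y]$ times $-1$, which matches $(-1)^{b(\cp)} = -1$. For the inductive step, choose $k$ so that $\la \in \mathscr{P}_n^k$ and such that $\Phi_k$ decreases $b(\cp)$ strictly; by \cref{breadth} this is achieved by picking $k$ equal to the content of a tile of $\CP$ strictly below its apex. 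The identity holds for $\Phi_k(\la)$ and $\Phi_k(\cp)$ by the inductive hypothesis, and because $\Phi_k$ is a graded $\Bbbk$-algebra isomorphism by Theorem \ref{dilation}, it suffices to pull back through $\Phi_k^{-1}$. The crucial point is that $\Phi_k$ acts on the pair $D^\la_{\la - \cp}$ and $D^{\la - \cp}_\la$ by multiplying by $i$ (respectively $-i$) for the generator whose Dyck path meets the contracted content at a local cap (respectively a local cup), so the two factors combine to contribute exactly an additional $-1$ in the sandwich product, converting $(-1)^{b(\cp)-1}$ into the desired $(-1)^{b(\cp)}$.

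The main obstacle will be the sign bookkeeping across the variety of cases generated by the contraction map: decorated versus undecorated cups, contractions at the interior of $\CP$ versus near the left wall, and in particular the boundary values $k=0,1$ where Theorem \ref{dilation} introduces an extra minus sign for decorated cups. Each of these subcases must be verified separately to confirm that the contracted data and the pullback signs combine consistently to $(-1)^{b(\cp)}$, and similarly that the zero identity in the undefined case survives the boundary contractions.
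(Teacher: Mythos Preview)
Your inductive step has a genuine gap. Theorem~\ref{dilation} tells you how $\Phi_k$ acts on the quadratic generators $D^\nu_\mu$, but not on the element ${\sf gap}(\stt_\la - [x,y])$ itself. To run the induction you need $\Phi_k\bigl({\sf gap}(\stt_\la - [x,y])\bigr) = \pm\,{\sf gap}(\stt_{\Phi_k(\la)} - [x',y'])$ for the corresponding tile, and you cannot get this by first expressing the gap in terms of the $D^\nu_\mu$, since that is exactly what the lemma asserts. One could in principle compute $\varphi_k$ on the gap via its monoidal description on spot and identity generators (from \cite{ChrisHSP}), but that is extra work your proposal does not supply. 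There are smaller problems too: your base case invokes \cref{nullbraidontiles}(iii), a four-tile relation, on what is a single gap tile (for $b(\cp)=1$ the $s^\ast/s$ stack already \emph{is} the gap, so no null-braid is needed); your undefined-case analysis misstates the possibilities (the strand through the northern edges of $[x,y]$ meets it as a local cup or a local cup-cap, never a local cap) and omits the case where $S\cap[x,y]$ is a local cup but $S$ is a propagating ray; and ``$k$ equal to the content of a tile of $\CP$ strictly below its apex'' does not by itself guarantee $\la\in\mathscr{P}_n^k$ --- you need a breadth-$1$ cup in $\underline{\la}$ at that content.

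The paper avoids contraction entirely here. It repeatedly applies \cref{nullbraidontiles}(iii) to propagate the gap-labelled tile diagonally along the strand $S=\langle x,y\rangle_\la$. If $S$ is an undecorated cup this terminates with gap tiles at the top boundary, which is directly the $s^\ast/s$ stacking defining $D^\la_{\la-\cp}D^{\la-\cp}_\la$. If $S$ is decorated, propagation reaches the left wall at content $s_0$ or $s_1$, and one finishes with \cref{adjacencywayback}(i) followed by repeated \cref{nullbraidontiles}(iv). In the undefined cup-cap case the same propagation reaches content $s_2$ at the wall and \cref{adjacencywayback}(iii) yields zero. This route is self-contained within the tableau relations already established, and the sign is accumulated step by step rather than bookkept through contraction.
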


\begin{proof}
Suppose $S$ is the strand connecting the two northern edges of $[x,y]$, then if $\langle x,y \rangle_{\la}$ is not well-defined, either $S$ is a propagating ray or $S\cap[x,y]$ is a local cup-cap. If $S\cap[x,y]$ is a local cup-cap, the following two points hold:
\begin{enumerate}
\item $[x,y] \notin\textsf{Rem}(\lambda$) and
\item $x-y=i$ for some $i$ even.
\end{enumerate}
Point $(1)$ follows from the fact that the $S$ must also intersect the box $[x,y+1]\in \la$ (note that $[x,y+1]\cap S$ is a cap). Point (2) is simply a restatement of \cref{even}.
 Applying \cref{nullbraidontiles} (iii) $\frac{i-2}{2}$ times to the oriented tableaux corresponding to ${\sf gap}(\stt_\la- [x,y])$ results in the oriented tableaux corresponding to ${\sf gap}(\stt_\la- [x-\frac{i-2}{2},2])$; then applying \cref{adjacencywayback} (iii) to this oriented tableau gives the required result in the case that $\langle x,y \rangle_{\la}$ is not well-defined.

Now assume that $\langle x,y \rangle_{\la}$ is a well-defined cup. Apply \cref{nullbraidontiles} (iii) to the oriented tableau of ${\sf gap}(\stt_\la- [x,y])$ $k$ times, where $k$ is the number of local cups in $\langle x,y \rangle_{\la}\cap \la$. Then there are only two cases: $(i)$ there exists two gap tiles at the top of the oriented tableau, one to the east and one to the west of $[x,y]$ and this is precisely the oriented tableau of $D^{\la}_{\la- \langle x,y \rangle_{\la} } D^{\la - \langle x,y \rangle_{\la}  }_{\la}$ or 
$(ii)$ there exists a gap labelled tile $[x',y']$, with $\textsf{ct}([x',y'])=s_0,\text{ or }{\color{darkgreen}s_1}$ at the left-hand wall. 
In case $(ii)$ apply \cref{adjacencywayback} (i) once and \cref{nullbraidontiles} (iv) $b(\cp)-k-1$ times to the tableau, and hence obtain the oriented tableau of $D^{\la}_{\la- \langle x,y \rangle_{\la} } D^{\la - \langle x,y \rangle_{\la}  }_{\la}$.
\end{proof}    
    
\begin{cor}\cite[Proposition 4.18]{ChrisHSP} \label{BBgaprel}    
Let  $\la \in \mptn $,  ${\color{cyan}[r,c]} \in {\rm Add}(\la)$ with $\textsf{ct}([r,c])=\ctau$. Then we have that
\begin{align}\label{notChook2} 
{\sf 1}_{\la}  \otimes {\sf bar}(\ctau  )  
  = 
\sum_{  \begin{subarray}{c} [r,y] \in \la  \\ [r-1,y] \in \la \\ y \leq c \end{subarray}}
(-1) ^{b \langle x,y  \rangle_{\la} } 
D^{\la}_{\la- \langle x,y \rangle_{\la} } D^{\la - \langle x,y \rangle_{\la}  }_{\la},
  \end{align}
  where the sum is taken over the $[x,y] \in \la$ with $[x,y]=[r,y]$ with $y<c$ or  $[x,y]=[r-1,y]$ with $y\leq c$ and $\langle x,y \rangle_\la$ well-defined.
\end{cor}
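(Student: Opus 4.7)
The strategy is to obtain the claimed identity as a direct combination of two already-available ingredients. First, I would invoke \cite[Proposition 4.18]{ChrisHSP}, which gives the bar-element decomposition
\[
{\sf 1}_{\la} \otimes {\sf bar}(\ctau) \;=\; \sum_{[x,y]} {\sf gap}(\stt_\la - [x,y]),
\]
with the sum taken over exactly the tiles $[x,y]\in\la$ satisfying $[x,y]=[r,y]$ with $y<c$ or $[x,y]=[r-1,y]$ with $y\le c$ (the two rows sitting immediately below the addable tile $[r,c]$, with the appropriate truncation in the column index coming from which tiles below $[r,c]$ can actually support a gap). Since this identity lives entirely inside $\mathscr{H}_{(D_n,A_{n-1})}$ and is stated without any well-definedness hypothesis on $\langle x,y\rangle_\la$, the sum on the right a priori contains some summands of both kinds.

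Second, I would apply \cref{gap} termwise to rewrite each of these gap elements: the lemma identifies
\[
{\sf gap}(\stt_\la - [x,y]) =
\begin{cases} (-1)^{b\langle x,y\rangle_\la} D^\la_{\la - \langle x,y\rangle_\la} D^{\la - \langle x,y\rangle_\la}_\la & \text{if } \langle x,y\rangle_\la \text{ is well-defined,} \\ 0 & \text{otherwise.} \end{cases}
\]
Substituting this into the sum makes the undefined-cup terms drop out automatically, and the surviving terms are precisely the ones indexed in the statement of the corollary. This gives the claimed identity on the nose.

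I do not anticipate any real obstacle: the hard work has already been done in \cref{gap}, where the null-braid relation (\cref{nullbraidontiles}) and the left-wall adjacency relations (\cref{adjacencywayback}) were used to reduce gap-labelled tableaux either to zero (when $\langle x,y\rangle_\la$ is undefined, i.e.\ the relevant strand is a propagating ray or a local cup-cap) or to a $D^\la_{\la-\langle x,y\rangle_\la} D^{\la-\langle x,y\rangle_\la}_\la$ term with the correct sign $(-1)^{b\langle x,y\rangle_\la}$. The only sanity check is that the index set in \cite[Proposition 4.18]{ChrisHSP} matches the one stated here, which follows from $\ctau$ being the content of the unique addable tile $[r,c]$ and hence pinning down precisely rows $r$ and $r-1$ with the stated cut-off in $y$.
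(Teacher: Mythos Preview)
Your approach is essentially identical to the paper's: invoke \cite[Proposition 4.18]{ChrisHSP} to express ${\sf 1}_\la \otimes {\sf bar}(\ctau)$ as a sum of gap elements over the stated index set, then apply \cref{gap} termwise to convert each surviving gap term into the corresponding $(-1)^{b\langle x,y\rangle_\la} D^\la_{\la-\langle x,y\rangle_\la} D^{\la-\langle x,y\rangle_\la}_\la$. One small discrepancy: the paper records the cited identity with an overall minus sign (citing also Lemma~4.20 of \cite{ChrisHSP}), so you should double-check that sign when writing it up.
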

 
\begin{proof}
In \cite[Proposition 4.18/Lemma 4.20]{ChrisHSP} we have that
 \begin{align}\label{notChook} 
{\sf 1}_{  \la}  \otimes {\sf bar}(\ctau  )  
  = 
-\!\!
\sum_{[x,y]}
  {\sf gap}(\stt_\la- [x,y])  
  \end{align}
    where the sum goes over the $[x,y] \in \la$ with $[x,y]=[r,y]$ with $y<c$ or  $[x,y]=[r-1,y]$ with $y\leq c$.
    For each gap term in the summation, apply \cref{gap} and the result then follows.
\end{proof}

 \subsection{Proof of \cref{presentation}}

By \cref{generatorsarewhatweasay} it suffices to check that  (\ref{rel1}) to (\ref{adjacentcup}) form a complete list of relations. 
We start by proving that all the relations hold before showing, at the end, that (\ref{rel1}) to (\ref{adjacentcup}) form a complete list of relations.
The idempotent relations are immediate.
We now check the other relations.

\smallskip
\noindent\textbf{Proof of the self-dual relation.} 
First, the left-hand sides of both \eqref{selfdualrelD} and \eqref{selfdualrelA} are equivalent, if $b(\cp) = 1$, to the case considered in \cref{BBgaprel}. 
Therefore, if $b(\cp) = 1$, to calculate the right-hand sides of \eqref{selfdualrelD} and \eqref{selfdualrelA} we need only consider which terms on the right-hand sides of \eqref{notChook2} correspond to (well-defined and hence non-zero) cups, $\langle x,y\rangle_\la$.
 The following claim will be useful in this regard; indeed, it will account for all terms which appear with multiplicity two on the right-hand side of \eqref{selfdualrelD} and \eqref{selfdualrelA}.

\noindent{\bf Claim: } For $1\leq  y\leq c-2$ we claim that 
$$ \langle r-1, y+1\rangle_\la = \langle r , y\rangle_\la.$$
To verify this claim, we first note that the tile $[r,y+1] \in \la$ by assumption and lies north-east of $[r-1,y+1]\in \la$ and north-west of $[r,y]\in\la$; therefore the local cups  within the tiles
$[r-1,y+1]$ and  $[r,y]$ belong to the same strand, $S_y$. 
 Let's consider the strand $S_y$ for $1\leq y \leq 
 c-2$.
  If $S_y \cap [r-1, y+1]$ is a local cup-cap then $S_y \cap [r, y]$ will also be a local cup-cap; in which case $ \langle r-1, y+1\rangle_\la$ and $\langle r , y\rangle_\la$ are undefined. 
   If $S_y \cap [r-1, y+1]$ is a local cup then $S_y \cap [r, y]$ will also be a local cup and hence using \cref{genuine}, we have that $ \langle r-1, y+1\rangle_\la = \langle r , y\rangle_\la=S_y:=\mq \in \underline{\mu}$. This proves the claim above.

We next group together the terms in  \eqref{selfdualrelD} and \eqref{selfdualrelA} according to 
  two cases:
 $(i)$  
$\{\mq\in \underline{\mu}\cap\underline{\la}\}$ such that either $\cp\prec \mq$ or $\cp\prec\prec \mq$ 
 and $(ii)$ $\{\ot\in \underline{\la}\}$ such that $\ot\in \underline{\la}$ is adjacent to $\cp$.
  We begin by rewriting the adjacent terms in case $(ii)$ in terms of the tile combinatorics of \cref{notChook2}.

  In \eqref{selfdualrelA}, the adjacent terms from case $(ii)$ are labelled by $\ot_1,\ot_2$ a commuting pair of cups 
which are both adjacent to $\cp$.      In this case, $\ot_1 = \langle r-1,c\rangle _\la$ 
    and 
    $\ot_2= \langle r ,c-1\rangle _\la$ (without loss of generality). 
 In \eqref{selfdualrelD},  $\cp$ is doubly covered and the terms from case $(ii)$ are labelled by 
   $\gr\prec \ot$, which are both adjacent to $\cp$. 
    In this case $\gr=  \langle r ,c-1\rangle _\la$ and 
    $ \ot= S_{c-2}=\langle r,c-2\rangle _\la = \langle r-1,c-1\rangle _\la$. 
    Therefore the claim implies that $S_{c-2}$ appears with multiplicity $2$ on the right-hand side of \cref{selfdualrelD} as required.
    
Finally, we consider case $(i)$.   
 The $\mq$ in $(i)$ are of the form $\mq= S_y$ for some $1\leq y \leq c-2$ 
 (with a strict inequality for $\cp$ doubly covered). By the claim this also implies that $\mq=S_y$ appears twice on the right-hand side of \eqref{selfdualrelD} and \eqref{selfdualrelA} as required. By collecting the cups from cases $(i)$ and $(ii)$ together, it is clear that \cref{selfdualrelD,selfdualrelA} hold for $b(\cp)=1$.

We now consider  the case that $b(\cp)>1$. We will use \cref{dilation} to reduce such cases down to cups of breadth 1.
By assumption $b(\cp)\geq 2$, so there exists some $k\in \mathbb{Z}_{\geq 0}$ such that $(\Phi_k(\la), \Phi_k(\mu))\vcentcolon=(\la', \mu')$,
 where $\la' = \mu'-\cp'$ and $b(\cp')< b(\cp)$. Using \cref{breadth}, this means that either
 $(1)$ $l_{\cp}<k<r_{\cp}$
 or 
 $(2)$
   $k<l_{\cp}$ and   $\cp$ is decorated. 
   We will check $(1)$ and $(2)$ first for \eqref{selfdualrelD} and then for \eqref{selfdualrelA}.
  
We begin by checking \eqref{selfdualrelD}. We note this is the case where $\cp$ is doubly covered as in \cref{adj lem2} c) and d).  
Suppose that $\vu\in \underline{\mu}$ is the unique cup  such that $\cp$ and $\vu$  do not commute 
and  $\cp \prec \prec \vu$
 (and hence there does not exist $\hat{\mq}$ such that $\cp  \prec \hat{\mq}$).
 By \cref{adj lem2} we have that $\cp$ is adjacent to a non-commuting pair $\gr \prec \ot$.  
We first consider  case $(2)$, that is    $k<l_{\cp}$.
In this case $\cp$ must be decorated and $\vu,\gr,\ot$ are as pictured in \cref{adj lem2} d). Applying \cref{dilation} to both sides of \eqref{selfdualrelD} we obtain:
\begin{eqnarray*}
D_{\mu'}^{\la'} D_{\la'}^{\mu'}
&=& (-1)^{b(\cp)}\Bigg(
2
    \sum_{    \begin{subarray}{c} \mq \in (\underline{\mu} \cap \underline{\la}) \\\cp \prec\prec\mq \end{subarray}}
(-1) ^{b(\mq)} D^{\Phi_k(\la)}_{\Phi_k(\la- \mq) } D^{\Phi_k(\la - \mq)  }_{\Phi_k(\la)}    \\
&&\qquad\qquad\qquad + 
2  (-1)^{b(\ot)} D^
{\Phi_k(\la)}_{\Phi_k(\la- \ot) } D^{\Phi_k(\la -\ot )}_{\Phi_k(\la)} + 
 (-1)^{b(\gr)} D^{\Phi_k(\la)}_{\Phi_k(\la- \gr) } D^{\Phi_k(\la - \gr )}_{\Phi_k(\la)}\Bigg).
\end{eqnarray*}
For any $\cp\neq\rs\in \underline{\la}$,  we have that $\Phi_k(\la-\rs) = \la' - \rs'$, by \cref{breadth}.
 Moreover, for $k< l_{\cp}$, $\Phi_k$ also gives rise to the bijections
\begin{equation}\label{akjhsdfhjkasdfhjkladsfhljkasf}
\begin{split}
\{\mq' \in\underline{\mu'}\cap\underline{\la'}\mid \cp'\prec \prec\mq'\} &\leftrightarrow
  \{\mq\in\underline{\mu}\cap\underline{\la} \mid \cp\prec \prec\mq\}\\
   \{\ot',\gr'\in\underline{\la}   \text{ adjacent to } \cp'\} &\leftrightarrow \{\ot,\gr\in\underline{\la}    \text{ adjacent to } \cp\}.\end{split}\end{equation}
Since $b(\cp) = b(\cp')+2$, $b(\mq) = b(\mq')+2$ if 
$\cp'\prec \prec \mq'$, $b(\ot) = b(\ot')+2$
 (as $\ot$ is decorated) 
 and $b(\gr) = b(\gr')$ (as $\gr$ is not decorated),
 the relation holds by \cref{dilation} and induction.

Now consider case $(1)$, that is $l_{\cp}<k<r_{\cp}$.
We can again apply \cref{dilation} to both sides of  \eqref{selfdualrelD} in order to obtain 
\begin{eqnarray*}
-D_{\mu'}^{\la'} D_{\la'}^{\mu'}
&=& (-1)^{b(\cp)}
 \Bigg(2
\sum_{    \begin{subarray}{c} \mq \in (\underline{\mu} \cap \underline{\la}) \\\cp \prec\prec\mq \end{subarray}}
(-1) ^{b(\mq)} D^{\Phi_k(\la)}_{\Phi_k(\la- \mq) } D^{\Phi_k(\la - \mq)  }_{\Phi_k(\la)}   \\
&& \qquad\qquad\quad\quad+ 
2  (-1)^{b(\ot)+1} D^
{\Phi_k(\la)}_{\Phi_k(\la- \ot) } D^{\Phi_k(\la -\ot )}_{\Phi_k(\la)} + 
 (-1)^{b(\gr)} D^{\Phi_k(\la)}_{\Phi_k(\la- \gr) } D^{\Phi_k(\la - \gr )}_{\Phi_k(\la)}\Bigg),	 \qquad \quad
\end{eqnarray*}
where the additional power of $(-1)$ in the second term on the right-hand side comes from the fact that 
$l_\ot<k<r_\ot$. For $l_{\cp}<k<r_{\cp}$, the same bijections as \eqref{akjhsdfhjkasdfhjkladsfhljkasf} hold; additionally noticing that $b(\cp) = b(\cp')+1$, $b(\mq) = b(\mq')+2$, $b(\ot) = b(\ot')+1$ and $b(\gr')=b(\gr)$,
we see that \eqref{selfdualrelD} holds by induction.

We now check \eqref{selfdualrelA}. Hence, suppose now that $\cp$ is not doubly covered. 
 In case $(1)$, we have that  $l_{\cp}<k<r_{\cp}$ and so 
 we can apply \cref{dilation} to \eqref{selfdualrelA} as above to obtain 
\begin{eqnarray*}
-D_{\mu'}^{\la'} D_{\la'}^{\mu'}
&=& (-1)^{b(\cp)}\left(
2
\!\! \sum_{   \begin{subarray}{c} \mq\in \underline{\la}\cap \underline{\mu} \\ \cp\prec \mq\end{subarray}}
\!\!
(-1) ^{b(\mq)+1}(-1) D^{\Phi_k(\la)}_{\Phi_k(\la- \mq) } D^{\Phi_k(\la - \mq)  }_{\Phi_k(\la)} \right. \\
&& + 
2
\left. \!\! \sum_{   \begin{subarray}{c} \mq\in \underline{\la}\cap \underline{\mu} \\\cp\prec\prec \mq \end{subarray}}
\!\!
(-1) ^{b(\mq) } D^{\Phi_k(\la)}_{\Phi_k(\la- \mq) } D^{\Phi_k(\la - \mq)  }_{\Phi_k(\la)}   + 
  \!\!\sum_{  \begin{subarray}{c} \ot\in \underline{\la} \\ \ot \, \text{adj.}\ \cp \end{subarray}}
\!\!
 (-1)^{b(\ot)} D^{\Phi_k(\la)}_{\Phi_k(\la- \ot) } D^{\Phi_k(\la - \ot)  }_{\Phi_k(\la)}\right),
\end{eqnarray*}
where the additional power of $(-1)$ in the first summand comes from the fact that $ l_{\mq}<k< r_{\mq}$.
 The second summand counts the cups $\mq\in\underline{\mu}\cap\underline{\la}$ that doubly cover $\cp$ (and so  $k<  r_{\cp} <l_{\mq}$) and so the  signs are preserved. 
 Similarly,  the third summand consists of
  adjacent terms for which the signs are again preserved.

For any $\cp\neq\rs\in \underline{\la}$,  we have that $\Phi_k(\la-\rs) = \la' - \rs'$, by \cref{breadth}.
 Moreover, for $l_\cp<k< r_{\cp}$, $\Phi_k$ gives rise to bijections as in
\eqref{akjhsdfhjkasdfhjkladsfhljkasf} and the additional bijections
\begin{align}
\{\mq'\mid \cp'  \prec\mq'\} \leftrightarrow
  \{\mq \mid \cp  \prec\mq\}
    \qquad 
   \{ \ot'   \text{ adjacent to } \cp'\} \leftrightarrow \{ \ot   \text{ adjacent to } \cp\}.
   \end{align}
Now, we have that $b(\cp) = b(\cp')+1$, $b(\mq) = b(\mq')+1$ if $\cp'\prec \mq'$; $b(\mq) = b(\mq')+2$ if $\cp'\prec\prec \mq'$ and $b(\ot) = b(\ot')$ if $\ot'$ adjacent to $\cp'$. Therefore, applying \cref{dilation} implies that the relation holds by induction.

Finally, if $k<l_{\cp}$ then as above $\cp$ must be decorated and is as pictured in \cref{adj lem1} d). Here $\cp$ is not (doubly) covered by any cup and is only adjacent to a single undecorated cup. 
Applying \cref{dilation} we get:
\begin{eqnarray*}
D_{\mu'}^{\la'} D_{\la'}^{\mu'}
&=& (-1)^{b(\cp)}(
 (-1)^{b(\ot)} D^{\Phi_k(\la)}_{\Phi_k(\la- \ot) } D^{\Phi_k(\la - \ot)  }_{\Phi_k(\la)}).
\end{eqnarray*}
Since $b(\cp')+2=b(\cp)$ and $b(\ot)=b(\ot')$ the relation follows by \cref{dilation} and induction as in the other subcases above.

\medskip
\noindent 
\textbf{Proof of the commuting relations.}
The pair of cups  $\cp$ and $\mq$  commute  if and only if the Dyck paths  they generate,
 $\color{cyan}P$ and $\color{magenta}Q$, have empty intersection. 
 The relations then follow by applying spot idempotent and fork idempotent relations as illustrated in \cref{zeroorientation}.

\medskip
\noindent 
\textbf{Proof of the non-commuting relations. }
In \eqref{noncommutingcup1}, we can assume that $\mq\prec \cp$ without loss of generality. 
 Comparing the Dyck paths generated by $\cp$ and $\mq$, we have that $  {\sf first}(\MQ)-{\sf first}(\CP)\geq 1$, 
  $  {\sf last}(\MQ)-{\sf last}(\CP)\geq 1$ and ${\sf ct}(\MQ)\subset {\sf ct}(\CP)$. 
  Hence, to calculate the product $D^{\mu -\cp}_\mu D^\mu_{\mu-\mq}$ using the corresponding oriented tableaux, as in \cref{Non-com pic}, we need only consider tiles whose content belongs to ${\sf ct}(\MQ)$.

  \begin{figure}[ht!]
$$
 \begin{minipage}{3.6cm}
 
\end{minipage}
$$
\caption{An example of a product of oriented tableaux for non-commuting $\CP$ and $\MQ$.}
\label{Non-com pic}
\end{figure}

For a tile of content $k$, with ${\sf first}(\MQ)\leq k \leq {\sf last}(\MQ)$, we apply 
the first two spot-fork relations in \cref{some pics} 
 to the product tableau and hence obtain trivial $1$s and $0$s 
in these positions (see for example, the left of \cref{Non-com pic2}). 
  By deleting the 0-oriented tiles we obtain the unique oriented tableau of 
shape $\mu-\mq$ and weight $\mu-\mq- {\color{darkgreen}q}^1 -{\color{orange}q}^2$ (an example is depicted on the left of \cref{Non-com pic2}), and the result follows. 
\color{black}

\begin{figure}[ht!]
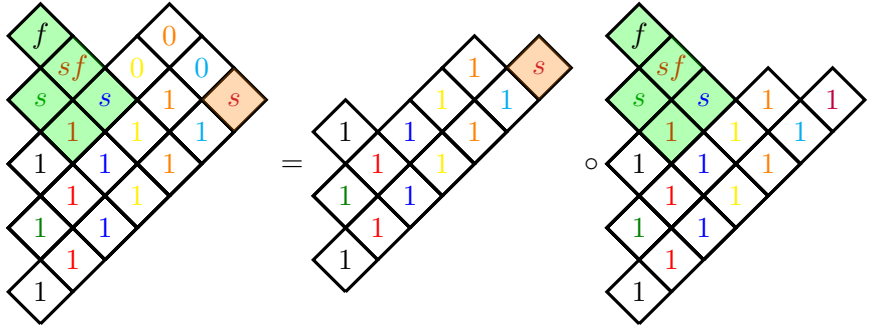


$$
 \begin{minipage}{3.5cm}
 
\end{minipage}
$$

 \caption{On the left we have applied the spot-fork relations of \cref{spotpic1} to the tableau from the right of 
 \cref{Non-com pic}. On the right we have rewritten this oriented tableau as the product of two commuting tableaux.}
  \label{Non-com pic2}
\end{figure}

\medskip
\noindent 
\textbf{Proof of the doubly non-commuting relations. }
 In \eqref{noncommutingcup2}, we have that $\mq \prec \prec \cp$ without loss of generality. Comparing the Dyck paths generated by $\cp$ and $\mq$, we have that ${\sf last}(\MQ)<{\sf first}(\CP) $ and  ${\sf ct}(\MQ)\subset {\sf ct}(\CP)$. Hence, to calculate the product $\stt= D^{\mu -\cp}_\mu D^\mu_{\mu-\mq}$ using the corresponding oriented tableaux, as in \cref{Doubly Non-com pic1}, we initially need only consider tiles of content $k$, where $k\leq{\sf last}(\MQ)$.
 

\begin{figure}[ht!]
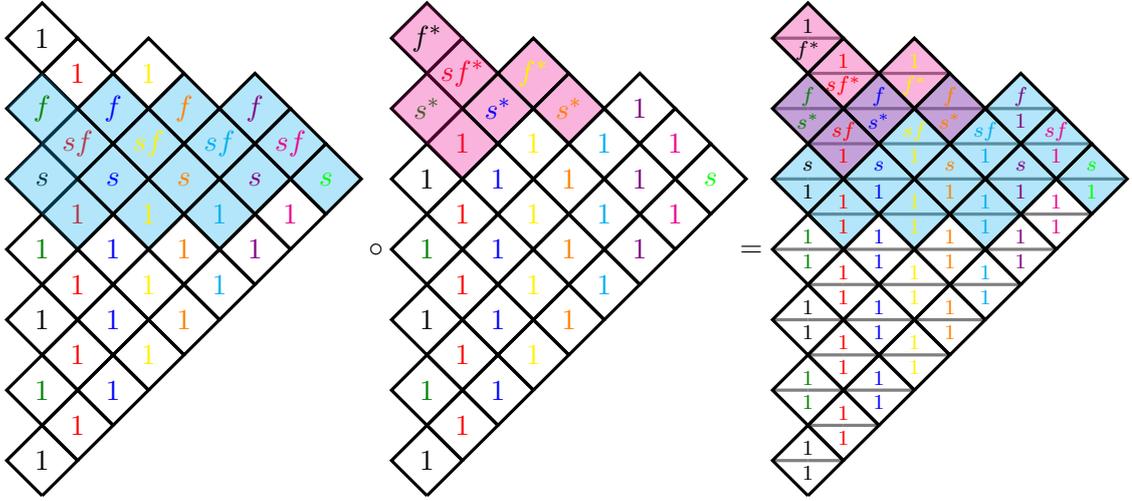

$$
 \begin{minipage}{4.7cm}
 
\end{minipage}
$$
 \caption{An example of a product of tableaux for doubly non-commuting $\CP$ and $\MQ$.}
  \label{Doubly Non-com pic1}
\end{figure}

{\bf Step 1. } 
First we consider tiles of content $k$, where ${\sf first}(\MQ) <k \leq {\sf last}(\MQ)$. For $k$ odd we apply the first relation from \cref{some pics} and the spot relation pictured in \cref{spotpic2}. For $k$ even we apply the third relation from \cref{some pics}. The resulting oriented tableau is obtained from our original tableau by replacing all tiles from $\MQ$ in this interval as to have $0$-orientation and replacing all $sf$-oriented tiles (by their nature in $\CP$) in this interval so as to have $f$-orientation (see the left-hand of \cref{Doubly Non-com pic2} for an example). The remaining tiles in this interval have fixed orientation.

{\bf Step 2. } Next we deal with the tiles of content $k$, where $0\leq k\leq {\sf first}(\MQ)$. 
If $\cp$ is undecorated we leave these tiles unchanged.
If $\cp$ is decorated, then we apply the final relation from \cref{some pics} to tiles of non-zero even content in this interval;
to tiles with odd content or content equal to zero, apply the first two relations in \cref{some pics} in combination (see also \cref{blank space baby}). 
The resulting oriented tableau is obtained from the tableau obtained as output of Step 1 
 by replacing all tiles from $\MQ$ in this interval so as to have $0$-orientation and replacing all the tiles of $\CP$ in the interval as to have $1$-orientation (see the left-hand of \cref{Doubly Non-com pic2} for an example). 
Deleting the $0$-oriented tiles we obtain an oriented tableau, $\stt$, of shape $\mu-\mq$ and weight $\mu-\cp-\mq$.

We now apply the rules of \cref{Go forth and multiply} to the product $\stt'=D^{\mu -\cp }_{\mu - \mq - {\color{orange}q}^2}D^{\mu -\mq- {\color{orange}q}^2}_{\mu -\mq}$, from the right-hand side of \eqref{noncommutingcup2}, in order to obtain 
the oriented tableau $\stt$  of shape $\mu-\mq$ (obtained as output of Step 2 above).

\begin{figure}[ht!]
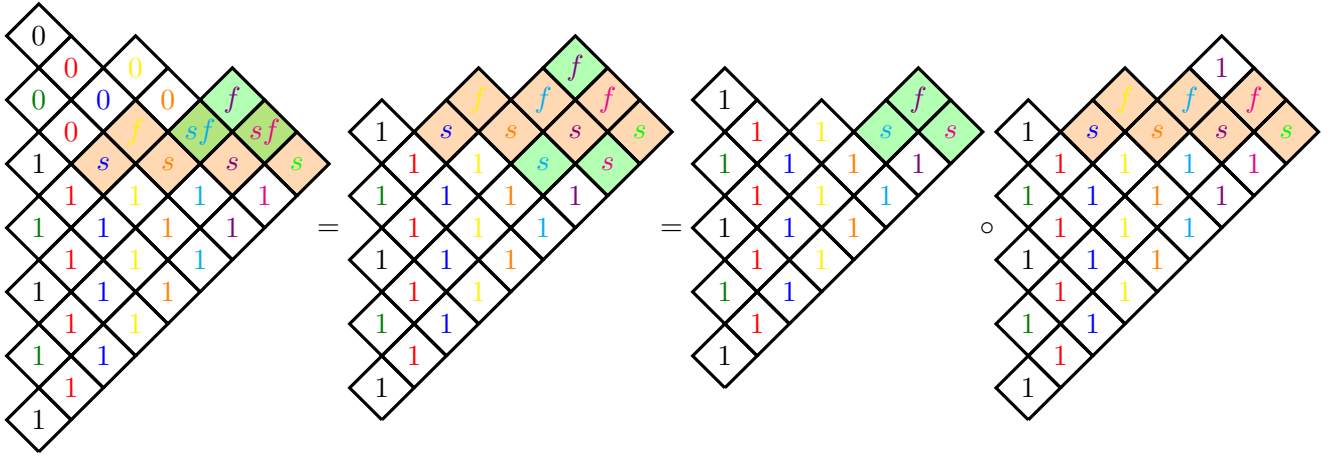

$$
 \begin{minipage}{4cm}
 
\end{minipage}
$$

 \caption{On the left we have applied the spot-fork relations of \cref{some pics} to the tableau from the right of 
 \cref{Doubly Non-com pic1}. The first equality is an application of \cref{spotpic1} and the final equality comes from the method of multiplying non-commuting tableaux.}
  \label{Doubly Non-com pic2}
\end{figure}

We have that ${\color{darkgreen}q}^1$ and  ${\color{orange}q}^2$ are a non-commuting pair with ${\color{darkgreen}q}^1\prec {\color{orange}q}^2$. 
  Hence (just as in the non-commuting proof above), to calculate $\stt'$ using the corresponding oriented tableaux (of shape $\mu-\mq$), pictured in \cref{Doubly Non-com pic2}, we need only consider tiles whose content belongs to ${\sf ct}({\color{darkgreen}Q}^1)$.
 
To tiles of even content $k$ of $\stt'$, for ${\sf first}({\color{darkgreen}Q}^1)\leq k \leq {\sf last}({\color{darkgreen}Q}^1)$ we
 apply the first equality in the second relation of \cref{spotpic1}.
This creates $sf$-oriented tiles that match up precisely with the $sf$-oriented tiles of $\stt$ (which arose from $sf$-oriented tiles in $\CP$).
 An example is pictured in the second equality of \cref{Doubly Non-com pic2}.
We obtain, $\stt$, the unique oriented tableau of 
shape $\mu-\mq$ and weight $\mu-\cp-\mq$ (as depicted on the left of \cref{Doubly Non-com pic2}), and \cref{noncommutingcup2}  follows.

\medskip
\noindent 
\textbf{Proof of the adjacency relation.}
Throughout let $\la=\mu-\cp$ as in \eqref{adjacentcup}. Suppose $\cp\in \underline{\mu}$ and $\ot \in \underline{\la}$ are two adjacent cups.
We will first prove that if $\langle \cp\cup \ot\rangle_\mu$ exists, then the adjacency relation holds as in \eqref{adjacentcup}. We will do this by initially proving the relation holds for cup diagrams such that $\langle \cp\cup \ot\rangle_\mu$ is of minimal breadth; we will then apply \cref{dilation} to show that \eqref{adjacentcup} holds for $\langle \cp\cup \ot\rangle_\mu$ of any breadth. We split the proof into two separate parts; first when $\cp$ is covered and then when $\cp$ is doubly covered.

\medskip
\noindent\textbf{The case where $\cp$ is covered. }  
First, we consider when $\cp$ is covered. In this case $\cp$ is adjacent to two cups in $\underline{\la}$, which we label $\ot$ and $\gr$ from left to right. We will deal here only with $\ot$; the case with $\gr$ is essentially the same. From \cref{adj lem2} we have that $\ot$ is decorated if and only if $\langle {\color{cyan}p}\cup \ot\rangle_\mu$ is decorated. 
We first suppose $b(\langle \cp\cup \ot\rangle_\mu)=2$, from which it follows that $b(\cp)=b(\ot)=1$.

We start with the left-hand side of \eqref{adjacentcup} and the product $D^{\mu - {\color{cyan}p} - \ot}_{\mu - {\color{cyan}p}}D^{\mu - {\color{cyan}p}}_\mu$. We record this as an oriented tableau of shape $\mu$ and weight $\mu-\cp-\ot$ as on the left-hand side of \cref{adj 2}.
Apply \cref{nullbraidontiles} (ii) to this oriented tableau to create an oriented tableau as in the first equality of \cref{adj 2}.
The dual labels on the oriented tableau correspond to a cup $S^*$ of breadth $1$ in $\mu-\cp-\ot$ which is also adjacent to $\cp$; hence $S^*$ must be equal to the cup $\gr$, as required. The non-dual labels trace out a breadth two cup $S \in \underline{\mu}$ with 
$l_S=l_\ot$; this is the vertex of $\ot$ that is not common with $\cp$; therefore $S=\langle \cp\cup \ot\rangle_\mu$, by definition.
Taken together, this implies that the oriented tableau is that of the product $D^{\mu - \cp- \ot}_{\mu - \langle \cp\cup \ot\rangle_\mu}D^{\mu - \langle \cp\cup \ot\rangle_\mu}_\mu$, where we note that $\mu-\cp-\ot-\gr=\mu-\langle \cp \cup \ot\rangle_\mu$. 
 This combined with the fact that $b(\langle \cp\cup \ot\rangle_\mu)=2$ and $b(\ot)=1$ proves that \cref{adjacentcup} holds.

\begin{figure}
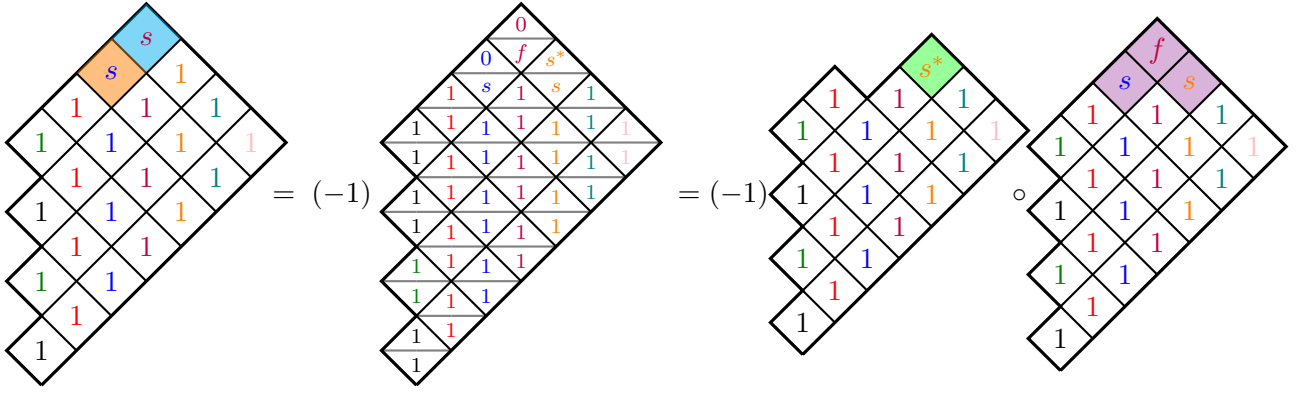
$$
\begin{minipage}{3.2cm}
  \end{minipage}
$$ 
\caption{An example of the adjacency relation on oriented tableaux when $\cp$ is covered and $b(\langle \cp\cup \ot\rangle_\mu)=2$. In the first equality we apply \cref{nullbraidontiles} (ii) and in the second equality we rewrite this resulting tableau as the tableau that correspond to the product $D^{\mu - \cp- \ot}_{\mu - \langle \cp\cup \ot\rangle_\mu}D^{\mu - \langle \cp\cup \ot\rangle_\mu}_\mu$.
}
\label{adj 2}
\end{figure}

Suppose now that $b(\langle \cp\cup \ot\rangle_\mu)>2$. Then there exists a $0\leq k<r_{\langle \cp\cup \ot\rangle_\mu}$ such that $(\Phi_k(\la), \Phi_k(\mu))=(\la', \mu')$,
where $\la' = \mu'-\cp'$ and $b(\langle \cp\cup \ot\rangle_\mu')< b(\langle \cp\cup \ot\rangle_\mu)$.
We apply $\Phi_k$ from \cref{dilation} to both sides of \eqref{adjacentcup} and depending on the exact value of $k$ we obtain one of three equations:

(i) If $0\leq k<l_{\langle \cp\cup \ot\rangle_\mu}$, then we obtain:
$$(\pm) D^{\mu' - \cp' - \ot'}_{\mu' - \cp'}D^{\mu' - \cp'}_{\mu'} = (-1)^{b(\langle \cp\cup \ot\rangle_\mu) - b(\ot)} 
(\pm) D^{\mu' - \cp' - \ot'}_{\mu' - \langle \cp'\cup \ot'\rangle_{\mu'}} D^{\mu' - \langle \cp'\cup \ot'\rangle_{\mu'}}_{\mu'}$$
where the common minus sign only occurs if $k=0,1$ and if $\langle \cp\cup \ot\rangle_\mu$ (and hence $\ot$) is decorated. Noting that $b(\langle \cp\cup \ot\rangle_\mu')\equiv b(\langle \cp\cup \ot\rangle_\mu) \pmod 2$ and that $b(\ot')\equiv b(\ot)\pmod2$ shows that if \eqref{adjacentcup} holds for $\la'=\mu-\cp'$ then \eqref{adjacentcup} holds for $\la=\mu-\cp$ as well.

(ii) If $l_{\langle \cp\cup \ot\rangle_\mu}<k<l_\cp$ or $r_\cp<k<r_{\langle \cp\cup \ot\rangle_\mu}$, then we obtain:
$$(\pm i)D^{\mu' - \cp' - \ot'}_{\mu' - \cp'}D^{\mu' - \cp'}_{\mu'} = (-1)^{b(\langle \cp\cup \ot\rangle_\mu) - b(\ot)} 
(\pm i) D^{\mu' - \cp' - \ot'}_{\mu' - \langle \cp'\cup \ot'\rangle_{\mu'}} D^{\mu' - \langle \cp'\cup \ot'\rangle_{\mu'}}_{\mu'}$$
Note that $b(\langle \cp\cup \ot\rangle_\mu')+1=b(\langle \cp\cup \ot\rangle_\mu)$ and $b(\ot')+1=b(\ot)$; thus if \eqref{adjacentcup} holds for $\la'=\mu-\cp'$ then  \eqref{adjacentcup} holds for $\la=\mu-\cp$ as well.

(iii) Finally, if $l_\cp<k<r_\cp$, then we obtain:
$$(\pm i)D^{\mu' - \cp' - \ot'}_{\mu' - \cp'}D^{\mu' - \cp'}_{\mu'} =(-1)^{b(\langle \cp\cup \ot\rangle_\mu) - b(\ot)} 
(\mp i) D^{\mu' - \cp' - \ot'}_{\mu' - \langle \cp'\cup \ot'\rangle_{\mu'}} D^{\mu' - \langle \cp'\cup \ot'\rangle_{\mu'}}_{\mu'}
$$
Note that $b(\langle \cp\cup \ot\rangle_\mu')+1=b(\langle \cp\cup \ot\rangle_\mu)$ and that $b(\ot')=b(\ot)$; therefore that if \eqref{adjacentcup} holds for $\la'=\mu-\cp'$ then  \eqref{adjacentcup} holds for $\la=\mu-\cp$.

Hence, by repeatedly applying \cref{dilation}, we can reduce the breadth until $b(\langle \cp\cup \ot\rangle_\mu)=2$, and hence deduce that \eqref{adjacentcup} holds for any $\cp$ such that there exists some $\mq \in \underline{\mu}$ such that $\cp \prec \mq$.

\medskip
 \noindent\textbf{The case where $\cp$ is doubly covered. } 
Next we consider when $\cp$ is doubly covered.
By \cref{adj lem2}, we have that $\cp$ is adjacent to two non-commuting cups $\gr\prec\ot$, where $\langle \cp\cup \ot\rangle_\mu$ exists but $\langle \cp\cup \gr\rangle_\mu$ does not. Suppose first that $b(\langle \cp\cup \ot\rangle_\mu)=3$ (and so is minimal) and hence $b(\cp)=1$. 

As above, we start with the left-hand side of \eqref{adjacentcup} and the product $D^{\mu - {\color{cyan}p} - \ot}_{\mu - {\color{cyan}p}}D^{\mu - {\color{cyan}p}}_\mu$. We record this as an oriented tableau of shape $\mu$ and weight $\mu-\cp-\ot$, as on the left-hand side of \cref{adj 7}.
 Apply \cref{adjacencywayback} (ii) to this oriented tableau, as in the first equality of \cref{adj 7}. The dual labels on the oriented tableau correspond to a cup $S^*$ of breadth $1$ on $\mu-\cp-\ot$ which is also adjacent to $\cp$ and is therefore actually the cup $\gr$. 
The non-dual labels trace out a breadth three decorated cup $S$ on $\mu$ with 
$r_S=r_\ot$; this is the vertex of $\ot$ that is not common with $\cp$. So by definition, $S=\langle \cp\cup \ot\rangle_\mu$. 
 Taken together, as above, this implies that the oriented tableau is that of the product $D^{\mu - \cp- \ot}_{\mu - \langle \cp\cup \ot\rangle_\mu}D^{\mu - \langle \cp\cup \ot\rangle_\mu}_\mu$, where again we note that $\mu-\cp-\ot-\gr=\mu-\langle \cp \cup \ot\rangle_\mu$. We have equated the oriented tableaux and it only remains to check the signs. Noting that $b(\langle \cp\cup \ot\rangle_\mu)=3$ and $b(\ot)=2$ proves that \cref{adjacentcup} holds.

\begin{figure}[ht!]$$
\begin{minipage}{3cm}
  \end{minipage}
$$
\caption{An example of the adjacency relation on oriented tableaux when $\cp$ is doubly covered and $b(\langle \cp\cup \ot\rangle_\mu)=3$. In the first equality we apply \cref{nullbraidontiles} (iii) and in the second equality we rewrite this resulting tableau as the tableaux that correspond to the product $D^{\mu - \cp- \ot}_{\mu - \langle \cp\cup \ot\rangle_\mu}D^{\mu - \langle \cp\cup \ot\rangle_\mu}_\mu$.}
\label{adj 7}
\end{figure}

Suppose now that $b(\langle \cp\cup \ot\rangle_\mu)>3$. Then there exists a $0\leq k<r_{\langle \cp\cup \ot\rangle_\mu}$ such that $(\Phi_k(\la), \Phi_k(\mu))=(\la', \mu')$,
where $\la' = \mu'-\cp'$ and $b(\langle \cp\cup \ot\rangle_\mu')< b(\langle \cp\cup \ot\rangle_\mu)$. 
We apply $\Phi_k$ from \cref{dilation} to both sides of \eqref{adjacentcup}. Depending on the exact value of $k$ we obtain one of five equations which we go through case-by-case below:

(i) If $0\leq k<l_\cp$, then we obtain:
$$ (\pm)D^{\mu' - \cp' - \ot'}_{\mu' - \cp'}D^{\mu' - \cp'}_{\mu'}= (-1)^{b(\langle \cp\cup \ot\rangle_\mu) - b(\ot)} 
(\pm)D^{\mu' - \cp' - \ot'}_{\mu' - \langle \cp'\cup \ot'\rangle_{\mu'}} D^{\mu' - \langle \cp'\cup \ot'\rangle_{\mu'}}_{\mu'}
$$ Note that $b(\langle \cp\cup \ot\rangle_\mu')+2= b(\langle \cp\cup \ot\rangle_\mu)$ and that $b(\ot')\equiv b(\ot)\pmod2$; therefore if \eqref{adjacentcup} holds for $\la'=\mu-\cp'$ then \eqref{adjacentcup} holds for $\la=\mu-\cp$ too.

(ii) If $l_\cp=\frac{1}{2}$ and $k=1$, then we obtain:
$$(\pm i)^2 D^{\mu' - \cp' - \ot'}_{\mu' - \cp'}D^{\mu' - \cp'}_{\mu'}= (-1)^{b(\langle \cp\cup \ot\rangle_\mu) - b(\ot)} 
 (-)D^{\mu' - \cp' - \ot'}_{\mu' - \langle \cp'\cup \ot'\rangle_{\mu'}} D^{\mu' - \langle \cp'\cup \ot'\rangle_{\mu'}}_{\mu'}
$$Note that $b(\langle \cp\cup \ot\rangle_\mu')+1= b(\langle \cp\cup \ot\rangle_\mu)$ and $b(\ot')+1=b(\ot)$; thus if \eqref{adjacentcup} holds for $\la'=\mu-\cp'$ then  \eqref{adjacentcup} holds for $\la=\mu-\cp$ too.

(iii) If $l_\cp<k<r_\cp$, $k\neq1$, then we obtain:
$$(\pm i)^2 D^{\mu' - \cp' - \ot'}_{\mu' - \cp'}D^{\mu' - \cp'}_{\mu'}= (-1)^{b(\langle \cp\cup \ot\rangle_\mu) - b(\ot)} 
 D^{\mu' - \cp' - \ot'}_{\mu' - \langle \cp'\cup \ot'\rangle_{\mu'}} D^{\mu' - \langle \cp'\cup \ot'\rangle_{\mu'}}_{\mu'}
$$Note that $b(\langle \cp\cup \ot\rangle_\mu')+2= b(\langle \cp\cup \ot\rangle_\mu)$ and $b(\ot')+1=b(\ot)$; hence if \eqref{adjacentcup} holds for $\la'=\mu-\cp'$ then  \eqref{adjacentcup} holds for $\la=\mu-\cp$ too.

(iv) If $r_\cp<k<l_{\langle \cp\cup \ot\rangle_\mu}$, then we obtain:
$$(\pm i) D^{\mu' - \cp' - \ot'}_{\mu' - \cp'}D^{\mu' - \cp'}_{\mu'}D^{\mu - {\color{cyan}p}}_\mu = (-1)^{b(\langle \cp\cup \ot\rangle_\mu) - b(\ot)} 
 (\mp i) D^{\mu' - \cp' - \ot'}_{\mu' - \langle \cp'\cup \ot'\rangle_{\mu'}} D^{\mu' - \langle \cp'\cup \ot'\rangle_{\mu'}}_{\mu'}
$$Note that $b(\langle \cp\cup \ot\rangle_\mu')+2= b(\langle \cp\cup \ot\rangle_\mu)$ and $b(\ot')+1=b(\ot)$; thus if \eqref{adjacentcup} holds for $\la'=\mu-\cp'$ then  \eqref{adjacentcup} holds for $\la=\mu-\cp$ too.

(v) Finally if $l_{\langle \cp\cup \ot\rangle_\mu}<k<r_{\langle \cp\cup \ot\rangle_\mu}$, then we obtain:
$$(\pm i) D^{\mu' - \cp' - \ot'}_{\mu' - \cp'}D^{\mu' - \cp'}_{\mu'}D^{\mu - {\color{cyan}p}}_\mu =(-1)^{b(\langle \cp\cup \ot\rangle_\mu) - b(\ot)} 
 (\pm i) D^{\mu' - \cp' - \ot'}_{\mu' - \langle \cp'\cup \ot'\rangle_{\mu'}} D^{\mu' - \langle \cp'\cup \ot'\rangle_{\mu'}}_{\mu'}
$$Note that $b(\langle \cp\cup \ot\rangle_\mu')+1= b(\langle \cp\cup \ot\rangle_\mu)$ and $b(\ot')+1=b(\ot)$; therefore if \eqref{adjacentcup} holds for $\la'=\mu-\cp'$ then \eqref{adjacentcup} holds for $\la=\mu-\cp$ too.

Hence, by repeatedly applying \cref{dilation} we can reduce the breadth until $b(\langle \cp\cup \ot\rangle_\mu)=3$, and we can hence deduce that \eqref{adjacentcup} holds for any doubly covered $\cp$ and adjacent $\ot$ for which $\langle \cp\cup \ot\rangle_\mu$ exists.
\medskip

\noindent\textbf{The case where $\langle \cp\cup \ot\rangle_\mu$ does not exist. } 
So far we have established that if $\langle \cp\cup \ot\rangle_\mu$ exists then \eqref{adjacentcup} holds as claimed. We now wish to prove that this is the only occasion that the product  $D^{\mu - \cp- \ot}_{\mu-\cp}D^{\mu-\cp}_{\mu}$ is non-zero. We have previously listed all possible cases of adjacent cups in \cref{adj lem1,adj lem2} and we will go through each of these cases individually in what follows. Initially we will assume that $b(\cp)=b(\ot)=1$ and then afterwards we will apply \cref{dilation} to conclude that every case can be reduced down to this minimal one.

We first begin with the cases listed in {\cref{adj lem1}:

We first consider \cref{adj lem1} a). Since $\ot$ is decorated and $b(\ot)=1$, we must have that $l_\ot=\frac{1}{2}$. In which case the equality follows trivially.

Next, we consider \cref{adj lem1} b). Since $\cp$ is decorated and $b(\cp)=1$ we must have that $l_\cp=\frac{1}{2}$. In which case the equality follows simply by applying \cref{adjacencywayback} (iii).

We now consider case c) of \cref{adj lem1}. We record the product $D^{\mu - {\color{cyan}p} -\ot}_{\mu - {\color{cyan}p}}D^{\mu - {\color{cyan}p}}_\mu$ as an oriented tableau of shape $\mu$ and weight $\mu-\cp-\ot$. 
Denote by $S\in\underline{\mu}$ the propagating ray such that $l_S=l_\ot$.
Apply \cref{nullbraidontiles} (ii) to the oriented tableau to create a gap labelled tile;
then repeatedly apply \cref{nullbraidontiles} (iii), to create 
gap labelled tiles--- this process terminates by hitting the boundary of the tableau, as in \cref{adj 200}. This repeated application of \cref{nullbraidontiles} (iii) creates gap-labelled tiles (and dork-labelled tiles) in all tiles which have non-empty intersection with $S$.
Since the strand $S$ is propagating, the tableau is automatically $0$ by the cyclotomic relations.

\begin{figure}
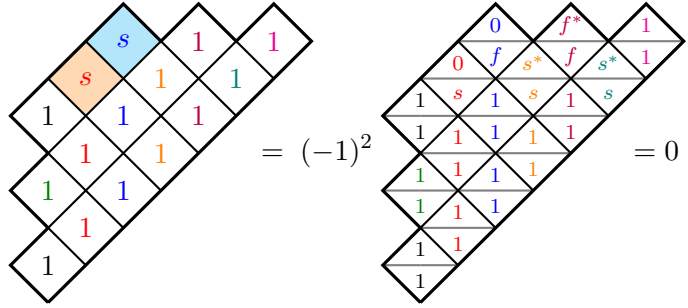
$$
\begin{minipage}{3cm}
  \end{minipage}\;\;=0
$$ 
\caption{An example of an oriented tableau from \cref{adj lem1} c). The resulting tableau is zero by the cyclotomic relations.}
\label{adj 200}
\end{figure}

We now consider the final case, d), of \cref{adj lem1}. We note that we must have $r_\cp=l_\ot=2k-\frac{1}{2}$ for some $k\in \mathbb{Z}$. Apply \cref{nullbraidontiles} (i) to create a gap labelled tile 
and then repeatedly apply \cref{nullbraidontiles} (iii) creating 
gap labelled tiles repeatedly. Since $r_\cp=l_\ot=2k-\frac{1}{2}$ the gap labelled tiles have even content and so this process will terminate with a gap-labelled ${\color{red}s_2}$-tile as in \cref{adj 4}, which is immediately zero by \cref{adjacencywayback} (iii).

\begin{figure}[ht!]
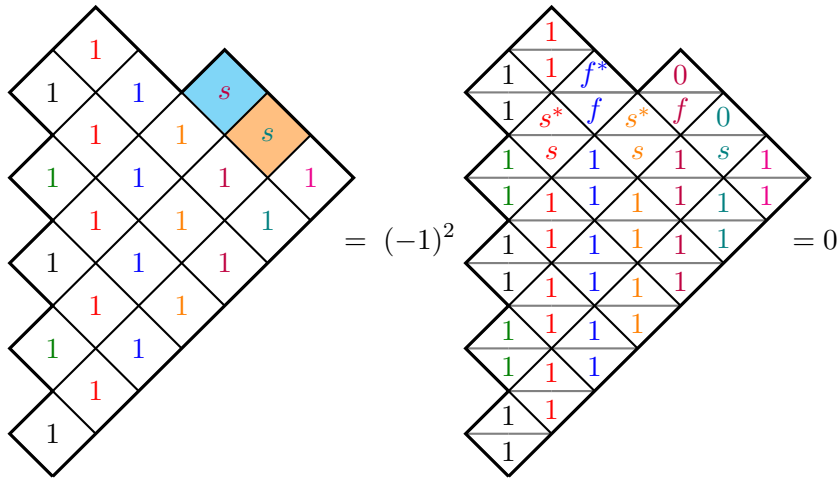
$$
\begin{minipage}{4.1
cm}
  \end{minipage}\;\;=0
$$ 
\caption{An example of an oriented tableau from \cref{adj lem1} d). The resulting tableau is zero by \cref{adjacencywayback} (ii).}
\label{adj 4}
\end{figure}

We now move onto the two cases in \cref{adj lem2}. In both \cref{adj lem2} c) and d), $\cp$ is doubly covered and is adjacent to two non-commuting cups $\gr\prec\ot$, where $\langle \cp\cup \gr\rangle_\mu$ does not exist. 

So, first we consider \cref{adj lem2} c). We note that $r_\cp=l_\gr=2k-\frac{1}{2}$ and therefore applying \cref{nullbraidontiles} (i) and \cref{nullbraidontiles} (iii) repeatedly, creates a tableau with a gap-labelled ${\color{red}s_2}$-tile, just as above, which is immediately zero by \cref{adjacencywayback} (iii).

Finally, we consider the case of \cref{adj lem2} d). In this case $\cp$ is decorated and $b(\cp)=1$, we must have that $l_\cp=\frac{1}{2}$ and the equality follows immediately from \cref{adjacencywayback} (iii).

We have now checked the complete set of minimal breadth cases from  \cref{adj lem1,adj lem2} for which $\langle \cp\cup \ot\rangle_\mu$ does not exist. It remains to show that this is sufficient to prove that the relation holds in general.

If $b(\cp)\geq 2$ or $b(\ot)\geq 2$ then there exists some $k$, with $k<\max \{r_\ot, r_\cp\}$, such that $(\Phi_k(\la), \Phi_k(\mu))=(\la', \mu')$,
where $\la' = \mu'-\cp'$ and $b(\cp')< b(\cp)$ or $b(\ot')< b(\ot)$ (or both). A repeated application of \cref{dilation} to $D^{\mu - \cp- \ot}_{\mu-\cp}D^{\mu-\cp}_{\mu}$ until $b(\cp)=b(\ot)=1$ shows that \eqref{adjacentcup} holds for any adjacent cups $\cp$ and $\ot$.

\bigskip

It remains to show that the above relations form a complete list. This result follows in an identical manner to that at the end of the equivalent parabolic type $(A_n , A_{n-k} \times A_k)$ result (\cite[Theorem 7.3]{ChrisDyckPaper}); all that differs below is the shift of emphasis from the Dyck paths, used in parabolic type $(A_n , A_{n-k} \times A_k)$, towards oriented cups used in this paper to provide the presentation for $\mathcal{H}_{(D_n A_{n-1})}$. 
As such, we will reframe the proof of completeness of relations in the language of this paper below for comprehensiveness, but we emphasise that once we have constructed the basis as in \cref{cellular basis} and noted that the algebra $\mathcal{H}_{(D_n A_{n-1})}$ is generated as in \cref{generatorsarewhatweasay}, all that remains to check is that (\ref{rel1})--(\ref{adjacentcup}) cover every possible product of degree $1$ generators; the result then follows using general properties of the light leaves basis.

\smallskip

\noindent
{\bf Completeness of relations. } 
It is enough to show that using only the relations in (\ref{rel1})--(\ref{adjacentcup}) and their duals, that we can rewrite any product of $k$ degree $1$ generators as a linear combination of light leaves basis elements. We do this by inducting on $k$. For $k=1$ there is nothing to prove. For $k=2$, note that (\ref{rel1})--(\ref{adjacentcup}) and their duals cover precisely all possible (non-zero) products of two degree $1$ generators, rewriting these as linear combinations of basis elements, as required. Now, assume that the result holds for $k$ generators and consider a product of $k+1$ generators. By induction, it is enough to consider a product of the form
$$D^\mu_\la D^\la_\nu D^\nu_{\nu \pm \cp}
$$where $D^\mu_\la D^\la_\nu$ is a basis element of degree $k$ and $\cp \in \underline{\nu}$ or $\cp$ is such that $\underline{\nu+\cp}$ is a cup diagram. 
To show that this product can be rewritten as a linear combination of basis elements, we will also induct on $\ell(\la)+\ell(\nu)$. If $ \ell(\la)+\ell(\nu) = 0$, then $\la = \nu = \emptyset$, $\nu +\cp = (1)$ and, by using (\ref{rel1}), we have that 
$$D^\mu_\emptyset D^\emptyset_\emptyset D^\emptyset_{(1)} = D^\mu_\emptyset  D^\emptyset_{(1)}$$which 
is a basis element, as required. Now assume that $\ell(\la) + \ell(\nu) \geq 1$. If $\la \neq \nu$ then we can write 
$$D^\la_\nu = D^\la_{\nu -\mq} D^{\nu - \mq}_\nu$$for 
some $\mq\in \underline{\nu}$. Now using (\ref{rel1}) to (\ref{adjacentcup}) we can write
$$D^{\nu - \mq}_\nu D^\nu_{\nu \pm \cp} = \sum_{\nu'}c_{\nu'}D^{\nu - \mq}_{\nu'}D^{\nu'}_{\nu \pm \cp}$$for 
some $c_{\nu '}\in \Bbbk$ where $\ell(\nu') \leq \ell(\nu - \mq) < \ell(\nu)$ and 
$\ell(\nu') \leq \ell(\nu \pm \cp)$, with $D^{\nu'}_{\nu \pm \cp}$ and $D^{\nu - \mq}_\nu$ still of degree 1. Then we have
\begin{eqnarray*}
D^\mu_\la D^\la_\nu D^\nu_{\nu \pm \cp} &=& D^\mu_\la D^\la_{\nu -\mq}D^{\nu - \mq}_\nu D^\nu_{\nu \pm \cp} \\
&=& \sum_{\nu '} c_{\nu'} (D^\mu_\la D^\la_{\nu - \mq}D^{\nu - \mq}_{\nu'})D^{\nu'}_{\nu \pm \cp}\\
&=& \sum_{\nu' , \la'} d_{\nu ' , \la'} D^\mu_{\la'}D^{\la'}_{\nu'}D^{\nu'}_{\nu\pm \cp}.
\end{eqnarray*}
where the final equality follows by induction.
 It remains to consider the case where $\la = \nu$. Here we must have $\mu \neq \la$. Observe that 
$$D^\mu_\la D^\la_\la D^\la_{\la +\cp} = D^\mu_\la D^\la_{\la +\cp}$$
by (\ref{rel1}) and this is a basis element. The last case to consider is
$$D^\mu_\la D^\la_\la D^\la_{\la -\cp} = D^\mu_\la D^\la_{\la -\cp}.$$
As $\mu\neq \la$ we have 
$D^\mu_\la = D^\mu_{ \alpha}D^{\alpha}_\la$, where $\la=\alpha-\mq$ for some $\mq\in \underline{\alpha}$ and so
$$D^\mu_\la D^\la_\la D^\la_{\la -\cp} = D^\mu_{\la +\mq}D^{\la +\mq}_\la D^\la_{\la -\cp}.$$
Now, using (\ref{rel1}) to (\ref{adjacentcup}) we have 
$$D^{\alpha}_\la D^\la_{\la -\cp} =D^{\la +\mq}_\la D^\la_{\la -\cp} = \sum_{\nu'}c_{\nu'} D^{\la + \mq}_{\nu'} D^{\nu'}_{\la -\cp}$$
with $\ell(\nu') \leq \ell (\la - \cp)<\ell(\la)=\ell(\nu)$ and so 
\begin{eqnarray*}
D^\mu_{\alpha}D^{\alpha}_\la D^\la_{\la -\cp} = D^\mu_{\la +\mq}D^{\la +\mq}_\la D^\la_{\la -\cp} &=& \sum_{\nu'}c_{\nu'} (D^\mu_{\la + \mq} D^{\la + \mq}_{\nu'}) D^{\nu'}_{\la -\cp}\\
&=& \sum_{\la' , \nu'}d_{\la' , \nu'} D^\mu_{\la '} D^{\la'}_{\nu'}D^{\nu'}_{\la - \cp}
\end{eqnarray*}
using induction as ${\rm deg} D^\mu_{\alpha} D^{\alpha}_{\nu'} = k$, with $\ell(\la')\leq\ell(\nu') \leq \ell(\la -\cp)< \ell(\la)$. Now as $\ell(\la')+\ell(\nu')<\ell(\la)+\ell(\nu)$, we're done by induction.
 \qed

\section{Submodule Structure of Cell Modules} 
\label{structure}

 In this section, we make use of the benefits of \cref{presentation} to calculate the strong Alperin diagrams 
 of  cell modules of  $\mathcal{H}_{(D_n, A_{n-1})}$.
The graded composition factors of these cell  modules were explicitly calculated in 
 \cite{MR2813567,LEJCZYK_STROPPEL_2013,TypeDKhov}. 
 Our focus here is to determine the explicit extensions interlacing these composition factors, expressed in terms of our decorated oriented cup-cap combinatorics.
 In this section, we closely mirror the approach of \cite[Section 8]{ChrisDyckPaper}. By utilising their quadratic presentation of the Hecke category in parabolic type $(A_n , A_{n-k} \times A_k)$, they produce the equivalent results for cell modules in type $A$. Accordingly, where possible, we will choose our notation to coincide.
 
For the remainder of the paper, we will assume that $\Bbbk$ is a field.
 As noted in \cref{cellular basis}, the algebra $\mathcal{H}_{(D_n, A_{n-1})}$ is a basic (positively) graded quasi-hereditary algebra with graded cellular basis given by $$\{D_\la^\mu D^\la_\nu \mid  \la,\mu,\nu\in \mptn  \,\, \mbox{with}\,\, \underline{\mu}\la, \underline\nu\la \text{ oriented} \}.$$ 
For $\la \in \mptn$, we write $\mathcal{H}_{(D_n, A_{n-1})}^{\leq \la} = {\rm span}\{D^\mu_\alpha D^\alpha_\la \, : \, \alpha, \mu  \in \mptn , \, \alpha \leq \la\}$ and $\mathcal{H}_{(D_n, A_{n-1})}^{< \la} = {\rm span}\{D^\mu_\alpha D^\alpha_\la \, : \, \alpha, \mu  \in \mptn , \, \alpha < \la\}$. 
Set $${\rm DP}(\la):=\{ \mu\in \mptn\, : \,  \underline{\mu}\la \text{ oriented}\},$$ the (left) {\sf cell module} $\Delta_n(\la) = \mathcal{H}_{(D_n, A_{n-1})}^{\leq \la} / \mathcal{H}_{(D_n, A_{n-1})}^{< \la} $ has a basis given by 
 $$\{ u_\mu := D^\mu_\la + \mathcal{H}_{(D_n, A_{n-1})}^{<\la} \,  : \, \mu \in {\rm DP}(\la)\}.$$  Each $u_\mu$ generates a submodule of $\Delta_n(\la)$ with a $1$-dimensional simple head, which we denote by $L_n(\mu)$. 
 
The algebra $\mathcal{H}_{(D_n, A_{n-1})}$ is positively graded and so the grading provides a submodule filtration of $\Delta_n(\la)$. We can decompose ${\rm DP}(\la)$ as
 $${\rm DP}(\la) = \bigsqcup_{k \geq 0} {\rm DP}_k(\la) \quad \text{where} \quad  {\rm DP}_k(\la) = \{\mu\in {\rm DP}(\la) \, : \, \underline{\mu}\la \text{ is oriented of degree $k$}\}.$$
 \color{black}
 Note further that the algebra $\mathcal{H}_{(D_n, A_{n-1})}$ is generated in degrees $0$ and $1$. This implies that in order to describe the full submodule structure it is enough to find, for each $\mu \in {\rm DP}_k(\la)$, the set of all $\nu \in {\rm DP}_{k+1}(\la)$ such that 
 $$u_\nu = c  D^\nu_\mu u_\mu$$for 
 some $c\in \Bbbk$ and $\mu = \nu \pm \cp$ for some $\cp\in \underline{\nu}$ or $\cp \in \underline{\mu}$ respectively. This is a necessary condition for the existence of an extension between $L_{n}(\mu)$ and $L_{n}(\nu)$ in $\Delta_{(D_n, A_{n-1})}(\la)$. We will show that it is also sufficient.

Assume $\la= \mu- \sum_{i=1}^{k}\mq^i$. First suppose $\mu= \nu - \cp$. Note that $\underline{\nu}\la$ is oriented if and only if $\cp$ is not adjacent to any of the $\mq^i$ and hence we can write $\la = \nu - \sum_{i=1}^{k}\mq^i - \cp$. We have that
 $$D^{\nu}_\mu D^\mu_\la = D^{\nu}_\la$$ or  $$D^{\mu+\cp}_\mu D^\mu_\la = D^{\mu+\cp}_\la$$ 
 using the light leaves basis. 

Now suppose $\mu= \nu + \cp$. This means that ${\rm deg}(\underline{\mu}\la)+1 = {\rm deg}(\underline{\nu}\la)$; this implies that $\cp$ is (doubly) covered by a (doubly) non-commuting $\mq \in \{\mq^i\}$. We will prove by induction on ${\rm deg}(\underline{\mu}\la)$ that $D^{\mu-\cp}_\mu D^\mu_\la = D^{\mu - \cp}_\la$. 
We will explicitly prove this only for the doubly non-commuting case (the non-commuting case follows in an almost identical manner).

If ${\rm deg}(\underline{\mu}\la) = 1$, then $\mu - \mq=\la$ and the doubly non-commuting relation gives that
 $$D^{\mu -\cp}_\mu D^\mu_{\mu-\mq} = D^{\mu -\cp}_{\mu - \cp - \color{orange}{q}^1} D^{\mu - \cp - {\color{orange}q}^1}_{\mu - \mq } = D^{\mu - \cp}_{\mu-\mq}$$ where ${\color{darkgreen}{q}^2} \prec \color{orange}{q}^1$ are the two concentric non-commuting cups in $\underline{\mu-\cp}$ that upon being flipped creates $\underline{\mu-\mq}$.

Now assume that ${\rm deg}(\underline{\mu}\la)\geq 2$. 
Suppose $\mq$ is not doubly covered by any $\mq'$ with $\mq'\in \{\mq^i\}$. Then we can write $D^\mu_\la = D^\mu_{\mu - \mq} D^{\mu - \mq}_\la$ and we get that 
 $$D^{\mu - \cp}_\mu D^\mu_\la = D^{\mu - \cp}_\mu D^{\mu}_{\mu - \mq}D^{\mu - \mq}_\la = D^{\mu - \cp}_{\mu - \cp - {\color{orange}q}^1} D^{\mu - \cp - {\color{orange}q}^1}_{\mu - \mq} D^{\mu - \mq}_\la = D^{\mu - \cp}_\la$$by 
 the doubly non-commuting relations and the definition of the light leaves basis, where ${\color{orange}q}^1$ is as above.

If not then we have that   $D^{\mu}_\la = D^\mu_{\mu - \mq'}D^{\mu - \mq'}_\la$ for some $\mq' \in \{\mq^i\}$ commuting with $\cp$. Then we have
 $$D^{\mu - \cp}_\mu D^\mu_\la = D^{\mu - \cp}_\mu D^{\mu}_{\mu - \mq'}D^{\mu -\mq'}_\la = D^{\mu - \cp}_{\mu - \cp - \mq'} D^{\mu - \cp - \mq'}_{\mu - \mq'}D^{\mu - \mq'}_\la = D^{\mu - \cp}_{\mu - \cp - \mq'}D^{\mu - \cp - \mq'}_\la = D^{\mu - \cp}_\la$$
where the second equality follows from the commuting relation, the third one follows by induction, as ${\rm deg}(\underline{\mu-\mq'}\la) = {\rm deg}(\underline{\mu}\la) - 1$, and the final equality follows by using the definition of the light leaves basis.

  \begin{rmk}\label{EQ-P}
  We have shown that a non-trivial extension between $L_{n}(\mu)$ and $L_{n}(\nu)$ exists precisely when $\mu = \nu \pm \cp$ (for some $\cp\in \underline{\nu}$ or $\cp \in \underline{\mu}$). This explicitly identifies the degree $1$ generators of the underlying path algebra, validating that \cref{presentation} provides the Ext-quiver and relations presentation of $\mathcal{H}_{(D_n, A_{n-1})}$.
  \end{rmk}

  \begin{rmk}\label{socle} 
  We set $k_\la = \max \{ k\geq 0 \, | \, {\rm DP}_k(\la)\neq \emptyset\}$. One can check that ${\rm DP}_{k_\la}(\la)$ consists of a single element $\mu_\la$.  To construct the cup diagram of $\underline{\mu_\la}$, start with the weight $\la$ and apply the following two steps:
 
\begin{enumerate}[leftmargin=*]
 \item  
Repeatedly find a pair of vertices labelled $\up$ $\down$ in order from left to right that are neighbours in the sense that there are only vertices already joined by cups in between. Join these pairs of vertices together with undecorated cups until there are no more such $\up$ $\down$ pairs. 

\item 
Repeatedly find pairs of $\down$ $\down$ vertices in order from left to right that are neighbours in the sense that there are only vertices joined by an undecorated cup in between. Join these pairs of vertices together with decorated cups.
	
\item We are left with at most one $\down$ node followed immediately by a sequence of $\up$'s. If there is a $\down$ node join it to the neighbouring $\up$ node with an undecorated anticlockwise cup. Then join the sequences of $\up$'s with decorated cups. If there is a final $\up$ or $\down$ node left remaining, draw a vertical decorated or undecorated ray from this node.
\end{enumerate}
One can then find the weight $\mu_\la$ in the obvious way.
 \end{rmk}

 Suppose $ \la = \mu_\la - \sum_{i}\mq^i$. 
Note that \cref{socle} implies $\mu_\la$ is characterised by the following two properties:
 \begin{enumerate}
 \item There is no $\nu\in\mptn$ with $\underline{\nu}\la$ oriented such that $\mu_\la = \nu - \cp$
 \item If $\cp\in\{\mq^i\}$ and $\mq$ is (doubly) covered by $\cp$ then $\mq \in \{\mq ^i\}$.
 \end{enumerate}
 
In particular, this implies that if $\mu \in {\rm DP}_k(\la)$ for $k<k_\la$, then either ($1$) or ($2$) above fails. In each of these cases we have seen that we can find some $h\in \mathcal{H}_{(D_n, A_{n-1})}$ and $\nu\in {\rm DP}_{k+1}(\la)$ such that $u_\nu = hu_\mu$.  Therefore the radical and socle filtration of $\Delta_{n}(\la)$ coincide with its grading filtration and the socle of $\Delta_{n}(\la)$ is simply given by $L_{n}(\mu_\la)$.
Hence we  have the following:

\begin{thm}\label{submodule}
Let $\la \in \mptn$. The strong Alperin diagram of the standard module $\Delta_{n}(\la)$ has vertex set labelled by the set 
 $\{ L_{n}(\mu)\, : \,  \mu\in {\rm DP}(\la)\}$  and edges
$$L_{n}(\mu) \longrightarrow L_{n}(\nu)$$whenever 
$\mu \in {\rm DP}_k(\la)$, $\nu \in {\rm DP}_{k+1}(\la)$ for some $k\geq 0$ and $\mu = \nu \pm \cp$ for some $\cp\in \underline{\nu}$ or $\cp \in \underline{\mu}$ respectively.
Moreover, the radical and socle filtrations both coincide with the grading filtration and $\Delta_{n}(\la)$ has simple socle isomorphic to $L_{n}(\mu_\la)$ (where $\mu_\la$ is constructed in \cref{socle}).
\end{thm}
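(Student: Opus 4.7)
The plan is to observe that most of the work has been done in the paragraphs immediately preceding the statement, and to organise it into a coherent proof. The strategy is to exploit the fact that \cref{presentation} presents $\mathcal{H}_{(D_n, A_{n-1})}$ as a quadratic algebra generated in degrees $0$ and $1$: this means that the submodule structure of $\Delta_n(\la)$ is entirely controlled by the action of the degree-$1$ generators $D^\nu_\mu$ on the basis elements $u_\mu = D^\mu_\la + \mathcal{H}^{<\la}$. Since the grading on $\mathcal{H}_{(D_n, A_{n-1})}$ is non-negative and each $u_\mu$ sits in degree $\deg(\underline{\mu}\la)$, the grading filtration automatically refines the radical filtration; the heart of the proof is to show that these filtrations coincide, i.e.\ that the obvious necessary condition for an extension is also sufficient.

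First, I would record that it suffices to compute $D^\nu_\mu u_\mu$ for $\mu \in \mathrm{DP}_k(\la)$ and $\nu$ obtained from $\mu$ by a single cup flip, splitting into the two cases $\mu = \nu - \cp$ (flipping a cup of $\underline{\nu}$) and $\mu = \nu + \cp$ (flipping a cup of $\underline{\mu}$ which is (doubly) covered by one of the $\mq^i$ with $\la = \mu - \sum \mq^i$). In the first case the identity $D^\nu_\mu D^\mu_\la = D^\nu_\la$ is immediate from the definition of the light leaves basis, provided $\cp$ is not adjacent to any $\mq^i$ (which is precisely the condition for $\underline{\nu}\la$ to be oriented). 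The second case is the one requiring work, and I would prove $D^{\mu-\cp}_\mu D^\mu_\la = D^{\mu-\cp}_\la$ by induction on $\deg(\underline{\mu}\la)$, using the (doubly) non-commuting relation \eqref{noncommutingcup1}--\eqref{noncommutingcup2} to rewrite $D^{\mu-\cp}_\mu D^\mu_{\mu-\mq}$ and the commuting relation \eqref{commuting} together with induction to reduce to the base case ${\rm deg}(\underline{\mu}\la)=1$.

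Next, I would assemble the strong Alperin diagram: the argument above exhibits, for every pair $(\mu,\nu)$ with $\mu = \nu \pm \cp$ and consecutive degrees, a nonzero scalar multiple of $u_\nu$ in $\mathcal{H}_{(D_n,A_{n-1})} \cdot u_\mu$, and conversely any action of a degree-$1$ generator on $u_\mu$ either lands in such a $u_\nu$ or is zero (this is exactly the content of the generators-and-relations list in \cref{presentation}). Combined with the fact that $\mathcal{H}_{(D_n, A_{n-1})}$ is generated in degrees $0$ and $1$, this forces the edges of the strong Alperin diagram to be precisely those described, and forces the radical filtration to equal the grading filtration. Dualising gives the same conclusion for the socle filtration.

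Finally, for the simplicity of the socle, I would verify that the weight $\mu_\la$ constructed in \cref{socle} is the unique element of $\mathrm{DP}_{k_\la}(\la)$, by checking that the two listed characterising properties force the algorithmic construction there: $\mu_\la$ is maximal in the sense that no cup of $\underline{\mu_\la}$ can be flipped to produce a new element of $\mathrm{DP}(\la)$, and every cup of $\underline{\mu_\la}$ that (doubly) covers one of the $\mq^i$ is itself among the $\mq^i$. The main obstacle, and the only non-formal step, is the inductive verification of $D^{\mu-\cp}_\mu D^\mu_\la = D^{\mu-\cp}_\la$ in the doubly non-commuting case when $\cp$ is doubly covered by some $\mq$ that is itself nested with other $\mq'$; here one has to carefully pick the outer $\mq'$ commuting with $\cp$ to split $D^\mu_\la = D^\mu_{\mu-\mq'} D^{\mu-\mq'}_\la$ before applying the commuting relation, and then close the induction on $\deg(\underline{\mu-\mq'}\la) < \deg(\underline{\mu}\la)$.
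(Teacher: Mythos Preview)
Your proposal is correct and follows essentially the same approach as the paper: the paper's proof is precisely the material in the paragraphs preceding the theorem statement, organised exactly as you describe—the two cases $\mu = \nu \mp \cp$, the induction on $\deg(\underline{\mu}\la)$ using the (doubly) non-commuting and commuting relations, and the characterisation of $\mu_\la$ to pin down the socle. Your identification of the main obstacle (choosing an outer $\mq'$ commuting with $\cp$ to split $D^\mu_\la$ in the nested case) matches the paper's inductive step verbatim.
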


\cref{submodule} is an example of a strong Alperin diagram in $\mathcal{H}_{(D_7, A_{6})}$.

 \begin{figure}[ht!]
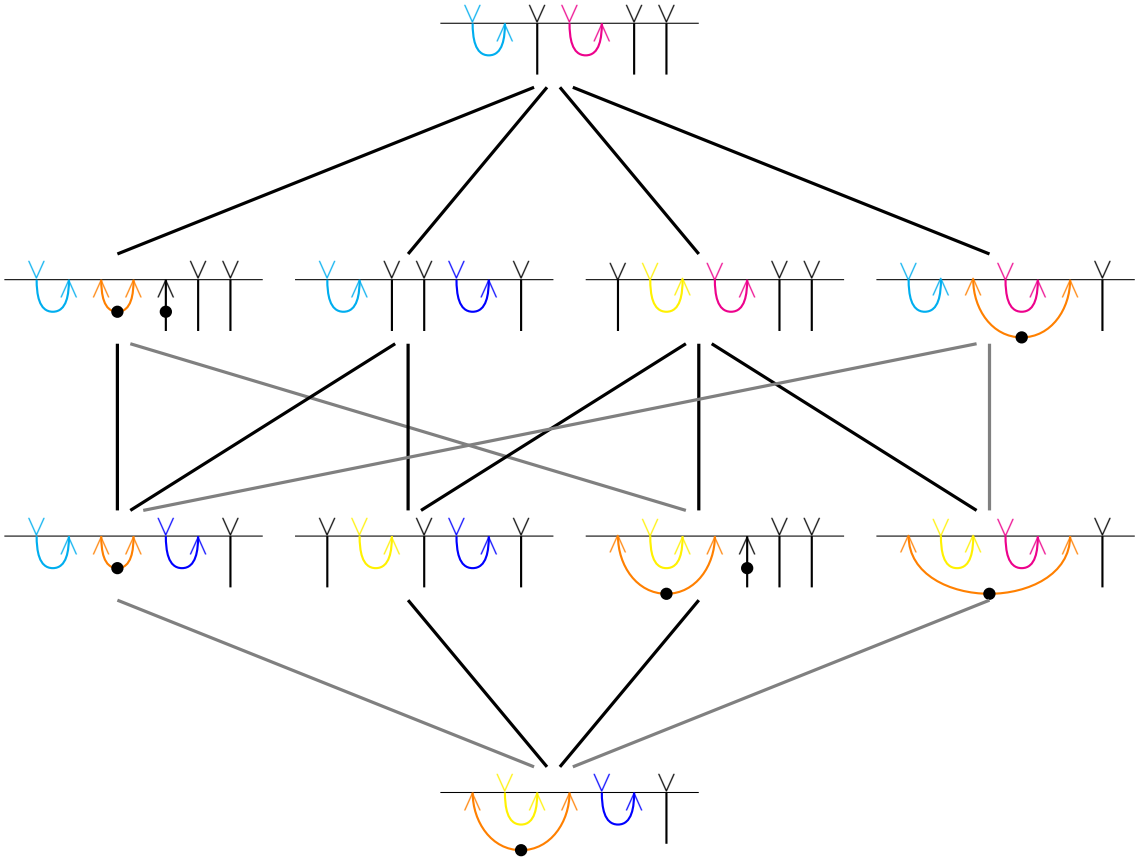

     $$   
.$$ 
    
    \caption{The strong Alperin diagram depicting the submodule structure of $\Delta_7(\la)$, where $\la=(1,2,1,1)\in \mathscr{P}_7$. 
    The simple head at the top of the diagram is indexed by the cup diagram $\underline{\la}\la$.
    The diagrams in the second and third row are the complete set of $\nu\in \mathscr{P}_7$ such that $\underline{\nu}\la$ is of degree $1$ and degree $2$, respectively.
     Finally the simple socle at the bottom is indexed by the unique $\mu_\la \in \mathscr{P}_7$ such that $\underline{\mu_\la}\la$ is of maximal degree ($3$).
    Black lines indicate an extension obtained by adding a cup and grey lines indicate extensions obtained by the removal of a cup.}
    \label{submodule}
    \end{figure}

               \bibliographystyle{amsalpha}   
\bibliography{Ben}

\end{document}